\theoremstyle{definition}
\newtheorem{definition}{Definition}
\theoremstyle{plain}
\newtheorem{theorem}[definition]{Theorem}
\newtheorem{corollary}[definition]{Corollary}
\newtheorem{lemma}[definition]{Lemma}
\newtheorem*{claim}{Claim}
\newcommand{\Area}{\mathop{\mathrm{Area}}}
\begin{document}

\author{Yuya~Koda}
\address{Department of Mathematics, Hiyoshi Campus, Keio University, 4-1-1, Hiyoshi, Kohoku, Yokohama, 223-8521, Japan/ International Institute for Sustainability with Knotted Chiral Meta Matter (WPI-SKCM${}^\text{2}$), Hiroshima University, 1-3-1 Kagamiyama, Higashi-Hiroshima, 739-8526, Japan}
\email{koda@keio.jp}

\author{Kazuto~Takao}
\address{Graduate School of Information Sciences, Tohoku University, 6-3-09 Aoba, Aramaki-aza Aoba-ku, Sendai, 980-8579, Japan}
\email{kazuto.takao.e1@tohoku.ac.jp}

\title[Diagrammatic criteria for strong irreducibility]{Diagrammatic criteria for strong irreducibility of Heegaard splittings and finiteness of Goeritz groups}
\subjclass{57K20, 57K30, 57M60}
\keywords{3-dimensional manifold, Heegaard splitting, Goeritz group, Heegaard diagram, rectangle condition}
\thanks{The first author is supported by JSPS KAKENHI Grant Numbers JP20K03588, JP21H00978, JP23H05437.
The second author was supported by JSPS KAKENHI Grant Number JP18K13409.}

\begin{abstract}
We give two criteria for diagrams of Heegaard splittings of $3$-manifolds.
Weaker one of them guarantees that the splitting is strongly irreducible, and the stronger one guarantees in addition that the Goeritz group is finite.
They are generalizations of the criteria introduced by Casson--Gordon and Lustig--Moriah.
Their original ones require as input the diagram formed by a pair of maximal disk systems.
Our present ones accept as input that of arbitrary disk systems, including minimal disk systems.
In other words, our criteria accept Heegaard diagrams.
\end{abstract}

\maketitle

\section{Introduction}

A Heegaard splitting is a decomposition of a $3$-manifold into two handlebodies.
Here, a {\it handlebody} is an orientable $3$-manifold obtained by attaching some $1$-handles to a $3$-ball.
We define a {\it Heegaard splitting} of a $3$-manifold $M$ as the triple $\left( \Sigma ,H,H^\ast \right) $ of a closed surface $\Sigma $ and two handlebodies $H$ and $H^\ast $ in $M$ such that $H\cup H^\ast =M$ and $\Sigma =H\cap H^\ast =\partial H=\partial H^\ast $.
The {\it genus} of $\left( \Sigma ,H,H^\ast \right) $ is the genus of $\Sigma $.
It is an important fact that every closed, orientable, connected $3$-manifold admits a Heegaard splitting.
Since Heegaard splittings of genera zero and one are well understood, we restrict our attention to those of genera greater than one.

A Heegaard diagram is a standard diagrammatic representation of a $3$-manifold and a Heegaard splitting of it.
Let $\left( \Sigma ,H,H^\ast \right) $ be a Heegaard splitting of genus $g$ of a $3$-manifold.
An {\it essential} disk of a handlebody, say $H$, is a disk $D$ properly embedded in $H$ such that $\partial D$ bounds no disk in $\partial H$.
A {\it disk system} of $H$ is a family ${\mathcal D}$ of essential disks of $H$ that are mutually disjoint and mutually non-parallel, and cut $H$ into some $3$-balls.
Note that the cardinality of ${\mathcal D}$ is at least $g$ and at most $3g-3$.
The disk system ${\mathcal D}$ is said to be {\it minimal} if the cardinality attains $g$, and {\it maximal} if $3g-3$.
Let ${\mathcal D}^\ast $ be a disk system of $H^\ast $.
The boundary curves of the disks in ${\mathcal D}$ and ${\mathcal D}^\ast $ form a diagram on the surface $\Sigma $, which determines the manifold and the splitting.
It is called a {\it Heegaard diagram} if ${\mathcal D}$ and ${\mathcal D}^\ast $ are minimal.
Non-minimal disk systems might also be useful under some circumstances, but are excessive for representing the manifold and the splitting.

\subsection{Strong irreducibility}

It is a-priori hard to extract geometric information of the manifold from a Heegaard splitting, but has been done from the strong irreducibility of it.
A Heegaard splitting $\left( \Sigma ,H,H^\ast \right) $ of a $3$-manifold $M$ is said to be {\it strongly irreducibile} if any essential disk of $H$ and any essential disk of $H^\ast $ have non-empty intersection.
If $\left( \Sigma ,H,H^\ast \right) $ is strongly irreducible, then it is irreducible as suggested by the name, and then $M$ is irreducible as shown by Haken \cite{Haken}.
If $\left( \Sigma ,H,H^\ast \right) $ is irreducible but not strongly irreducible, then $M$ has an essential surface as shown by Casson--Gordon \cite{Casson-Gordon}.
If $\left( \Sigma ,H,H^\ast \right) $ is strongly irreducible, then $M$ may have only essential surfaces that the component submanifolds are glued along with bounded complexities, as shown in \cite{Lackenby,Li07,Li10,Souto}.

We remark that the idea of strong irreducibility streamed abundant studies of so-called Hempel distance.
It is an integral measure of complexity for Heegaard splittings, introduced by Hempel \cite{Hempel}.
The distance is greater than zero if and only if the splitting is irreducible, and greater than one if and only if strongly irreducible.
The distance bounds from below the genera of essential surfaces in the $3$-manifold, as shown in \cite{Hartshorn}.

Casson--Gordon introduced a diagrammatic criterion, called the rectangle condition, for a Heegaard splitting to be strongly irreducible.
It requires as input the diagram formed by a pair of maximal disk systems of the hadlebodies.

We generalize the rectangle condition to diagrams including Heegaard diagrams.
\begin{theorem}\label{strongly_irreducible}
A Heegaard splitting is strongly irreducible if the rectangle condition holds for the diagram formed by a pair of disk systems of the handlebodies.
\end{theorem}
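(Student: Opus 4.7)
The plan is to prove the contrapositive by an outermost-arc/wave argument. Suppose for contradiction that there exist essential disks $E \subset H$ and $E^* \subset H^*$ with $\partial E \cap \partial E^* = \emptyset$. From $E$ together with the disk system $\mathcal{D}$ I will produce an essential arc $\gamma$ in the cut-open surface $\Sigma \setminus \partial \mathcal{D}$ with both endpoints on the boundary of a single $D \in \mathcal{D}$, and analogously an arc $\gamma^* \subset \Sigma \setminus \partial \mathcal{D}^*$ coming from $E^*$ and some $D^* \in \mathcal{D}^*$. Since $\gamma \subset \partial E$ and $\gamma^* \subset \partial E^*$, the hypothesis forces $\gamma \cap \gamma^* = \emptyset$, and this disjointness is what the rectangle condition, in its generalized form, must forbid.

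To construct $\gamma$, first isotope $E$ into general position with $\mathcal{D}$ in minimal intersection, using standard innermost-disk surgery on circle components of $E \cap \mathcal{D}$; these surgeries leave $\partial E$, hence $\partial E \cap \partial E^*$, unchanged. If $E \cap \mathcal{D} = \emptyset$, then $\partial E$ itself is an essential simple closed curve in $\Sigma \setminus \partial \mathcal{D}$ that plays the role of $\gamma$. Otherwise pick an outermost arc $\alpha \subset E$ on some $D \in \mathcal{D}$: it cobounds a subdisk $\Delta \subset E$ with an arc $\gamma \subset \partial E$ such that $\Delta \cap \mathcal{D} = \alpha$. Then $\gamma$ is properly embedded in $\Sigma \setminus \partial \mathcal{D}$ with both endpoints on $\partial D$. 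It is essential there: if $\gamma$ together with a subarc of $\partial D$ bounded a disk in $\Sigma \setminus \partial \mathcal{D}$, then combining that disk with $\Delta$ and the corresponding subdisk of $D$ would produce a sphere in $H$ bounding a ball across which $E$ could be isotoped to reduce $|E \cap \mathcal{D}|$, contradicting minimality. The same procedure applied to $(E^*, \mathcal{D}^*)$ yields $\gamma^*$, with parallel properties.

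Finally I would invoke the generalized rectangle condition: for each pair $(D, D^*)$, and for every pair of admissible essential arcs (waves) based at $\partial D$ and $\partial D^*$ respectively, the diagram $\partial \mathcal{D} \cup \partial \mathcal{D}^*$ contains a rectangle whose alternating sides lie on $\partial D$ and $\partial D^*$ and whose presence forces any two such waves to meet. Applying this to $(\gamma, \gamma^*)$ contradicts $\gamma \cap \gamma^* = \emptyset$, and the theorem follows.

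The main obstacle is the middle step — matching the wave produced by the outermost-arc construction to the precise class of arcs quantified over in the authors' generalized rectangle condition. In the classical maximal setting this is automatic because each component of $\Sigma \setminus \partial \mathcal{D}$ is a pair of pants in which essential arcs fall into finitely many isotopy types, and the rectangle condition enumerates them all. For arbitrary (and in particular minimal) disk systems the complementary pieces are more complicated planar surfaces, so one has to verify that the outermost-arc wave is indeed of the type the condition addresses, and one must separately treat the degenerate possibility that $E$ (or $E^*$) is disjoint from its disk system, where $\partial E$ itself supplies the needed curve.
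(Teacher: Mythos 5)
Your first step---reducing to the case of waves by outermost-arc surgery on $E\cap\overline{\mathcal D}$, handling separately the case $E\cap\overline{\mathcal D}=\emptyset$---is sound and is exactly how the paper begins (after arranging that $\{E\}$ and ${\mathcal D}$ intersect essentially). But the heart of the theorem is the step you leave open, and what you write there is not a proof but a restatement of the desired conclusion. The generalized rectangle condition is \emph{not} of the form ``for every pair of waves based at $\partial D$ and $\partial D^\ast$ there is a rectangle forcing them to meet''; it is a purely combinatorial condition on which rectangle types occur, packaged as $2$-connectivity of the graphs ${\mathcal G}_k$, whose edges in turn require $2$-connectivity of the graphs ${\mathcal G}_{k,\ell,i,\varepsilon,j,\delta}$. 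Deriving from that graph-theoretic hypothesis the geometric statement that a wave (or disjoint essential disk) $O$ in $H$ must meet a wave (or disjoint essential disk) $O^\ast$ in $H^\ast$ is precisely the content of the paper's Lemma~\ref{waves_intersect}, and nothing in your outline supplies it; indeed you explicitly flag that you do not see how to match your arcs to the condition.

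Concretely, the missing argument runs as follows. Because a complementary component $H_{k}$ of $\overline{\mathcal D}$ need not be a ball (a wave can return to the \emph{same} disk $D_i$, possibly from either side), one passes to the universal cover $\widetilde H$, where the closure $B_{k_0}$ of a lift of the component containing $O$ is a genuine $3$-ball whose boundary carries one lifted disk $\widetilde D_{i,\varepsilon}$ for each $(i,\varepsilon)\in A_{k_0}$. The lift $\widetilde O$ separates the lifted disks disjoint from it into a partition $(I,J)$; $2$-connectivity of ${\mathcal G}_{k_0}$ (here one must also note that at most one vertex, namely the one indexing the disk met by $\partial\widetilde O$, is unavailable) yields an edge joining some $(i_0,\varepsilon_0)\in I$ to some $(j_0,\delta_0)\in J$. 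That edge means ${\mathcal G}_{k_0,\ell_0,i_0,\varepsilon_0,j_0,\delta_0}$ is $2$-connected, and repeating the separation argument for $\widetilde{O^\ast}$ in the ball $B^\ast_{\ell_0}$ produces a rectangle $R$ of type $\left(\left(\partial D_{i_0},\varepsilon_0\right),\left(\partial D_{j_0},\delta_0\right);\left(\partial D^\ast_{v_0},\nu_0\right),\left(\partial D^\ast_{w_0},\mu_0\right)\right)$ such that $O\cap\Sigma$ must cross $R$ separating its two ${\mathcal D}$-sides while $O^\ast\cap\Sigma$ must cross it separating its two ${\mathcal D}^\ast$-sides, forcing $O\cap O^\ast\neq\emptyset$. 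Note also that the sides $\varepsilon$ of the disks, and the double layer of $2$-connectivity, are exactly what replaces the pair-of-pants enumeration you appeal to in the maximal case; without this lifting-and-partition argument your proposal does not establish the theorem for general (in particular minimal) disk systems.
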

\noindent
See Section~\ref{definition} for definitions, and Subsection~\ref{proof_irreducibility} for a proof.
See also \cite[Example~2.7]{Lustig-Moriah} for an example of a Heegaard splitting with infinitely many different pairs of disk systems that form diagrams satisfying the rectangle condition.

\subsection{Finiteness of Goeritz groups}

The {\it Goeritz group} of a Heegaard splitting is the group of symmetries of the splitting.
More precisely, it is the group of isotopy classes of orientation-preserving homeomorphisms of the $3$-manifold preserving each of the handlebodies setwise.
It is known that the Goeritz group is finite if the Hempel distance is greater than three, and infinite if the distance is less than two as shown by Johnson \cite{Johnson10}.
In the case where the distance is equal to three, Zou \cite{Zou} recently announced a proof of the finiteness.
In the case where the distance is equal to two, there exist Heegaard splittings with finite and infinite Goeritz groups as shown in \cite{Johnson(11)} and \cite{Iguchi-Koda}, respectively.
So far, we do not know the nature that separates these two phenomena.
Note that measuring the distance of a given splitting is another difficult problem.

Lustig--Moriah \cite{Lustig-Moriah} introduced a diagrammatic criterion, called the double rectangle condition, for a Goeritz group to be finite.
In fact, they did not mention Goeritz group, but their conclusion implies its finiteness.
The condition is based on maximal disk systems as well as Casson--Gordon's.

We generalize the double rectangle condition to diagrams including Heegaard diagrams.
The key theorem is the following.
\begin{theorem}\label{finiteness}
For any Heegaard splitting of any $3$-manifold, there exist at most finitely many pairs of disk systems of the hadlebodies that form diagrams satisfying the double rectangle condition, up to isotopies in the respective hadlebodies.
\end{theorem}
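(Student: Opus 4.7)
The strategy is to reduce the statement to Lustig--Moriah's original finiteness result for pairs of maximal disk systems satisfying the classical double rectangle condition. The core idea is to show that every pair $(\mathcal{D},\mathcal{D}^*)$ satisfying the generalized double rectangle condition extends to a pair of maximal disk systems $(\tilde{\mathcal{D}},\tilde{\mathcal{D}}^*)$ satisfying the classical condition; since the latter are finite in number up to isotopy by Lustig--Moriah's theorem, and each maximal disk system has only finitely many subsets that are themselves disk systems, the finiteness of generalized DRC-pairs follows.

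The first main step is to construct the extension. Starting from $(\mathcal{D},\mathcal{D}^*)$, I would enlarge $\mathcal{D}$ to a maximal disk system $\tilde{\mathcal{D}}$ of $H$ by adjoining essential disks that further subdivide each complementary handlebody component of $H\setminus\mathcal{D}$ until the pieces become $3$-balls bounded by hexagons. These added disks must be chosen so that their boundaries on $\Sigma$ respect the rectangle pattern coming from $\mathcal{D}^*$, in particular avoiding creating bigons with $\partial\mathcal{D}^*$. A symmetric construction on the dual side yields $\tilde{\mathcal{D}}^*$. One then verifies that the classical double rectangle condition is inherited by $(\tilde{\mathcal{D}},\tilde{\mathcal{D}}^*)$, using the fact that the rectangles required by the classical condition arise as restrictions of rectangles already guaranteed by the generalized condition on $(\mathcal{D},\mathcal{D}^*)$.

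The second step invokes Lustig--Moriah's finiteness to conclude that only finitely many classical-DRC maximal pairs arise up to isotopy. Since every maximal disk system has exactly $3g-3$ disks, it contains only finitely many sub-disk-systems, so the preimage of each classical-DRC pair under the extension assignment is finite. Combining these two facts produces the desired global finiteness statement.

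The principal obstacle is the first step: constructing the extension in a way that preserves the rectangle structure. The additional disks are constrained both by the topology of $H\setminus\mathcal{D}$ and by the requirement that their arcs on $\Sigma$ avoid bigons with $\partial\mathcal{D}^*$ and align with the rectangles of the generalized DRC. I would attempt this by induction on $3g-3-|\mathcal{D}|$, at each stage using the existing rectangle pattern to locate an essential disk in a complementary handlebody component whose boundary meets $\partial\mathcal{D}^*$ transversely in arcs that split the ambient rectangles into narrower ones; here Theorem~\ref{strongly_irreducible}, which already guarantees strong irreducibility, should provide the combinatorial room to find such a disk. A secondary issue is to check that every generalized DRC-pair actually extends in this fashion, which amounts to a local existence argument inside each complementary handlebody component.
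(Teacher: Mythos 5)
There is a genuine gap, and it sits exactly where you flag it: the claim that every pair $\left( {\mathcal D},{\mathcal D}^\ast \right) $ satisfying the generalized double rectangle condition extends to a pair of maximal disk systems satisfying the classical one. Nothing in your proposal actually produces such an extension, and the paper's own example (Section~\ref{example}, Figure~\ref{fig_curve_3}) is designed to show that the obvious extension fails: there the minimal pair satisfies the double rectangle condition, while the maximal pair obtained by adding disks fails it because certain required rectangle types simply do not occur. The structural reason your inheritance argument cannot work is that enlarging the disk systems makes the condition strictly harder, not easier: a rectangle must have interior disjoint from \emph{all} curves of both systems, so after adding curves the old rectangles get subdivided into rectangles whose sides involve the \emph{new} curves, whereas the classical condition demands rectangles of prescribed types between essentially all pairs of cuffs of every pair of pants of the maximal systems --- far more data than the generalized condition on $\left( {\mathcal D},{\mathcal D}^\ast \right) $ supplies. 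Strong irreducibility (Theorem~\ref{strongly_irreducible}) gives no ``combinatorial room'' to manufacture these rectangles; indeed, if the reduction you propose were true, the generalized criterion would certify nothing beyond Lustig--Moriah's, contradicting the stated point of the generalization. The second half of your argument (finitely many sub-disk-systems of a maximal system, hence finite fibers) is fine but moot once the first step collapses.

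The paper proceeds quite differently: it does not invoke Lustig--Moriah's theorem but adapts their techniques. One fixes an auxiliary pair of maximal disk systems $\left( {\mathcal E},{\mathcal E}^\ast \right) $ once and for all, and bounds the intersection combinatorics of any candidate ${\mathcal D}$ with ${\mathcal E}$: finiteness of parallelism classes of intersection arcs, isolated circles and connecting arcs (Lemma~\ref{lem_arc_type}, using waves and Lemma~\ref{waves_intersect}), then of disk pieces (Lemma~\ref{lem_disk_piece_type}), then a decomposition into thin and thick parts with finiteness of thick parts (Lemma~\ref{lem_thick_finiteness}); the double rectangle condition enters to show every disk piece is thick or lies in a peripheral subdisk (Lemma~\ref{lem_central}), and a ``disk pushing'' argument bounds the area of peripheral subdisks (Lemmas~\ref{lem_peripheral_number} and~\ref{lem_peripheral_area}), yielding a uniform bound on the number of disk pieces and hence finitely many possibilities for ${\mathcal D}$ up to isotopy. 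If you want to salvage your outline, you would have to prove the extension-with-classical-DRC claim, which the evidence above suggests is false in general.
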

\noindent
See Section~\ref{definition} for definitions, and Subsection~\ref{proof_finiteness} for a proof.
See also Section~\ref{example} for a family of examples satisfying the double rectangle condition.
We also observe that smaller disk systems may work better than maximal disk systems, and that the Heegaard splittings in the examples have Hempel distance two.
As a consequence of Theorem~\ref{finiteness}, we have the following.
\begin{corollary}\label{goeritz_finiteness}
The Goeritz group of a Heegaard splitting is finite if the double rectangle condition holds for the diagram formed by a pair of disk systems of the handlebodies.
\end{corollary}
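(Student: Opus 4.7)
The plan is to deduce the corollary from Theorem~\ref{finiteness} by an orbit--stabilizer argument. Let $G$ denote the Goeritz group of $\left(\Sigma,H,H^\ast\right)$, and let $\mathcal{P}$ be the set of pairs $\left(\mathcal{D},\mathcal{D}^\ast\right)$ of disk systems satisfying the double rectangle condition, taken modulo isotopies in the respective handlebodies. Since the double rectangle condition is a topological invariant of the diagram on $\Sigma$, the group $G$ acts on $\mathcal{P}$ by $\varphi\cdot\left[\left(\mathcal{D},\mathcal{D}^\ast\right)\right]=\left[\left(\varphi(\mathcal{D}),\varphi(\mathcal{D}^\ast)\right)\right]$, and Theorem~\ref{finiteness} asserts that $\mathcal{P}$ is finite. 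Hence it suffices to show that the stabilizer $G_p$ of any fixed $p\in\mathcal{P}$ is finite.

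To bound $G_p$, I would first pass to the mapping class group of $\Sigma$: by Waldhausen's theorem for handlebodies, a self-homeomorphism of a handlebody is determined up to isotopy by its restriction to the boundary, so the natural restriction map $G\to\mathrm{MCG}(\Sigma)$ is injective. For $\varphi\in G_p$ with $p=\left[\left(\mathcal{D},\mathcal{D}^\ast\right)\right]$, ambient isotopy extension applied in each handlebody shows that $\left[\varphi|_\Sigma\right]$ preserves each of the multicurves $\partial\mathcal{D}$ and $\partial\mathcal{D}^\ast$ setwise up to isotopy in $\Sigma$. Once I know that $\partial\mathcal{D}\cup\partial\mathcal{D}^\ast$ fills $\Sigma$, the standard fact that the stabilizer in $\mathrm{MCG}(\Sigma)$ of a filling multicurve is finite---such a mapping class permutes the finitely many complementary disks and is determined, up to a finite ambiguity on each disk, by that permutation---will give the finiteness of $G_p$.

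I expect the only nontrivial checkpoint to be the filling property of $\partial\mathcal{D}\cup\partial\mathcal{D}^\ast$. This is essentially automatic in the original Casson--Gordon setting for maximal disk systems, but must be argued in the present more general setup accepting minimal disk systems; it should nevertheless follow directly from the combinatorial definition of the rectangle condition in Section~\ref{definition}, since the existence of a rectangle meeting every admissible pair of bands precludes an essential simple closed curve disjoint from $\partial\mathcal{D}\cup\partial\mathcal{D}^\ast$.
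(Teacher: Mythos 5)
Your argument is correct, but it takes a genuinely different route from the paper. The paper does not use an orbit--stabilizer reduction: instead it shows that every element of the Goeritz group is torsion (some power $f^a$ fixes an element of the finite set of pairs from Theorem~\ref{finiteness}, a further power $f^{ab}$ fixes each complementary polygonal disk of the diagram and is then isotopic to the identity on $\Sigma$ and, by \cite[Theorem~3.7]{Fomenko-Matveev}, on each handlebody), and then concludes finiteness from the fact that the mapping class group of $\Sigma$ is virtually torsion-free (\cite[Theorem~6.9]{Farb-Margalit}, Serre), so a torsion subgroup must be finite. You replace that torsion-plus-virtual-torsion-freeness step by the finite index of the stabilizer $G_p$ (orbit--stabilizer on the finite set of Theorem~\ref{finiteness}) together with the standard Alexander-method fact that the $\mathrm{MCG}(\Sigma)$-stabilizer of a filling pair of multicurves is finite; both routes share the injectivity of the Goeritz group into $\mathrm{MCG}(\Sigma)$ and, in effect, the filling property of $\partial {\mathcal D}\cup \partial {\mathcal D}^\ast$, which the paper also asserts when it says the disks cut $\Sigma$ into polygonal disks. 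The one point you should firm up is exactly that filling claim: it does not follow ``directly from the combinatorial definition'' of the rectangle condition, but it does follow from Theorem~\ref{strongly_irreducible} (together with the implication from the double to the single rectangle condition, Lemma~\ref{lem_criteria_implication}): an essential simple closed curve in the complement of $\partial \overline{\mathcal D}\cup \partial \overline{{\mathcal D}^\ast }$ lies in a planar piece of the cut-open handlebodies, hence bounds essential disks in both $H$ and $H^\ast$, and two parallel copies of it yield disjoint essential disks, contradicting strong irreducibility. With that checkpoint justified, your proof is complete; its advantage is that it avoids the torsion detour and the appeal to virtual torsion-freeness, at the cost of invoking the minimal-position/Alexander-method finiteness of stabilizers of filling systems, which is of comparable depth.
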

\begin{proof}
Let $\left( \Sigma ,H,H^\ast \right) $ denote the Heegaard splitting.
Let $X$ denote the quotient, by isotopies in $H$ and $H^\ast $, of the set of pairs of disk systems of $H$ and $H^\ast $ that form diagrams satisfying the double rectangle condition.
It is non-empty by the hypothesis, and finite by Theorem~\ref{finiteness}.
Let $f$ be a representative of an element of the Goeritz group, that is, an orientation-preserving homeomorphism of the $3$-manifold preserving $H$ and $H^\ast $ setwise.
The map $f$ induces a permutation of the finite set $X$.
For some integer $a$, the induced permutation by $f^a$ preserves an element of $X$.
Let $\left( {\mathcal D},{\mathcal D}^\ast \right) $ be a representative of that element.
After isotopies in $H$ and $H^\ast $, we may suppose that $f^a$ preserves the union of the disks in ${\mathcal D}$ and ${\mathcal D}^\ast $ setwise.
By the definition of the double rectangle condition, the disks cut off the surface $\Sigma $ into finitely many polygonal disks.
The map $f^a$ induces a permutation of the polygonal disks.
For some integer $b$, the map $f^{ab}$ preserves each polygonal disk setwise.
It follows that $f^{ab}$ is isotopic to the identity in each polygonal disk, and hence in $\Sigma $.
By \cite[Theorem~3.7]{Fomenko-Matveev}, the map $f^{ab}$ is isotopic to the identity in $H$ and $H^\ast $, and hence in the entire manifold.
Thus the Goeritz group is a torsion group.
Note that the Goeritz group can be naturally thought of as a subgroup of the mapping class group of $\Sigma $.
By \cite[Theorem~6.9]{Farb-Margalit} (see also \cite{Serre}), the mapping class group has a representation into a finite group such that the kernel is torsion-free.
It follows that the torsion subgroup is finite.
\end{proof}

\section{Definitions}\label{definition}

In this section, we give preliminary definitions and the generalized definitions of rectangle condition and double rectangle condition.

We begin with miscellaneous preliminary definitions.
Let $\Sigma $ be a closed, orientable surface.
Let ${\mathcal C}$ be a family of mutually disjoint, essential, simple, closed curves on $\Sigma $, and ${\mathcal C}^\ast $ be another such family.
We say that ${\mathcal C}$ and ${\mathcal C}^\ast $ {\it intersect essentially} in $\Sigma $ if
\begin{itemize}
\item the curves in ${\mathcal C}$ and ${\mathcal C}^\ast $ intersect transversely, and
\item no bigon in $\Sigma $ is cobounded by subarcs of curves in ${\mathcal C}$ and ${\mathcal C}^\ast $.
\end{itemize}
Suppose that ${\mathcal C}$ and ${\mathcal C}^\ast $ intersect essentially in $\Sigma $, and that the curves are oriented.
We regard an {\it orientation} of a curve on $\Sigma $ as one of the two ways of labeling the two sides of the curve with the signs.
A rectangle in $\Sigma $ is said to be of {\it type} $\left( \left( C_1,\varepsilon _1\right) ,\left( C_2,\varepsilon _2\right) ;\left( C^\ast _1,\nu _1\right) ,\left( C^\ast _2,\nu _2\right) \right) $ if
\begin{itemize}
\item $C_1,C_2\in {\mathcal C}$ and $C^\ast _1,C^\ast _2\in {\mathcal C}^\ast $ and $\varepsilon _1,\varepsilon _2,\nu _1,\nu _2\in \{ -,+\} $,
\item its interior is disjoint from the curves in ${\mathcal C}$ and ${\mathcal C}^\ast $, and
\item it is cobounded by subarcs of $C_1$, $C_2$, $C^\ast _1$, $C^\ast _2$ as in the left of Figure~\ref{fig_rectangle}.
\end{itemize}
A rectangle in $\Sigma $ is said to be of {\it type} $\left( \left( C_1,\varepsilon _1\right) ,C_2,\left( C_3,\varepsilon _3\right) ;\left( C^\ast _1,\nu _1\right) ,\left( C^\ast _2,\nu _2\right) \right) $ if
\begin{itemize}
\item $C_1,C_2,C_3\in {\mathcal C}$ and $C^\ast _1,C^\ast _2\in {\mathcal C}^\ast $ and $\varepsilon _1,\varepsilon _3,\nu _1,\nu _2\in \{ -,+\} $, and
\item it is composed of two rectangles of types $\left( \left( C_1,\varepsilon _1\right) ,\left( C_2,-\right) ;\left( C^\ast _1,\nu _1\right) ,\left( C^\ast _2,\nu _2\right) \right) $ and $\left( \left( C_2,+\right) ,\left( C_3,\varepsilon _3\right) ;\left( C^\ast _1,\nu _1\right) ,\left( C^\ast _2,\nu _2\right) \right) $ as in the right of Figure~\ref{fig_rectangle}.
\end{itemize}
Let $H$ be a handlebody.
Let $D_1,D_2,\ldots ,D_n$ be mutually disjoint, essential disks of $H$, and ${\mathcal D}$ denote the family $\left\{ D_1,D_2,\ldots ,D_n\right\} $.
By convention, we denote the union $D_1\cup D_2\cup \cdots \cup D_n$ by $\overline{\mathcal D}$, and the family $\left\{ \partial D_1,\partial D_2,\ldots ,\partial D_n\right\} $ by $\partial {\mathcal D}$.
Let ${\mathcal E}$ be another family of mutually disjoint, essential disks of $H$.
The families ${\mathcal D}$ and ${\mathcal E}$ are said to be {\it isotopic} in $H$ if $\overline{\mathcal D}$ and $\overline{\mathcal E}$ are isotopic in $H$.
We say that ${\mathcal D}$ and ${\mathcal E}$ {\it intersect essentially} in $H$ if
\begin{itemize}
\item the disks in ${\mathcal D}$ and ${\mathcal E}$ intersect transversely,
\item $\partial {\mathcal D}$ and $\partial {\mathcal E}$ intersect essentially in $\partial H$, and
\item no lens in $H$ is cobounded by round subdisks of $\overline{\mathcal D}$ and $\overline{\mathcal E}$.
\end{itemize}
Note that if ${\mathcal D}$ and ${\mathcal E}$ intersect essentially in $H$, then there is no closed component in $\overline{\mathcal D}\cap \overline{\mathcal E}$, by the irreducibility of $H$.
For a set $A$ and two subsets $A_-$ and $A_+$ of it, we call the pair $\left( A_-,A_+\right) $ a {\it partition} of $A$ if $A_-\cup A_+=A$ and $A_-\cap A_+=\emptyset $.
By a {\it graph} simply, we mean a finite, $1$-dimensional, simplicial complex without loop edges or multiple edges.
A graph is said to be {\it $2$-connected} if it is connected even after deleting an arbitrary vertex.
Let ${\mathcal G}$ be a graph whose vertex set is $A$, and suppose that $\left( A_-,A_+\right) $ is a partition of $A$.
We say that ${\mathcal G}$ is {\it doubly $2$-connected} with respect to $\left( A_-,A_+\right) $ if ${\mathcal G}$ is connected even after deleting an arbitrary vertex in $A_-$ and an arbitrary vertex in $A_+$.
For example, the graph in Figure~\ref{fig_graph} is not doubly $2$-connected with respect to $\left( \{ 1,2,3\} ,\{ 4,5,6\} \right) $ because it is disconnected after deleting the vertices $2$ and $5$, while it is doubly $2$-connected with respect to $\left( \{ 1,2,4,5\} ,\{ 3,6\} \right) $.

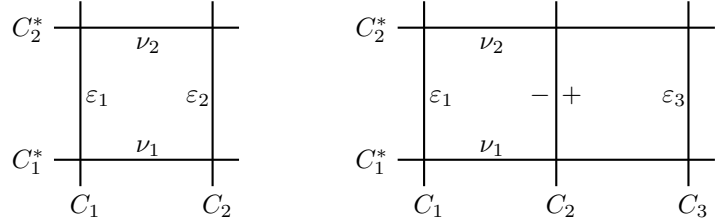
\begin{figure}[ht]
\begin{picture}(270,80)(0,0)
\thicklines
\put(20,70){\line(1,0){70}}
\put(20,20){\line(1,0){70}}
\put(30,10){\line(0,1){70}}
\put(80,10){\line(0,1){70}}
\put(4,67){$C^\ast _2$}
\put(4,17){$C^\ast _1$}
\put(26,0){$C_1$}
\put(76,0){$C_2$}
\put(51,62){$\nu _2$}
\put(51,23){$\nu _1$}
\put(32,42){$\varepsilon _1$}
\put(70,42){$\varepsilon _2$}
\put(150,70){\line(1,0){120}}
\put(150,20){\line(1,0){120}}
\put(160,10){\line(0,1){70}}
\put(210,10){\line(0,1){70}}
\put(260,10){\line(0,1){70}}
\put(134,67){$C^\ast _2$}
\put(134,17){$C^\ast _1$}
\put(156,0){$C_1$}
\put(206,0){$C_2$}
\put(256,0){$C_3$}
\put(181,62){$\nu _2$}
\put(181,23){$\nu _1$}
\put(162,42){$\varepsilon _1$}
\put(200,42){$-$}
\put(212,42){$+$}
\put(250,42){$\varepsilon _3$}
\end{picture}
\caption{Rectangles.}
\label{fig_rectangle}
\end{figure}

\begin{figure}[ht]
\begin{picture}(85,55)(0,0)
\put(0,0){\includegraphics{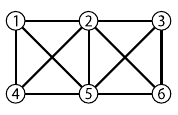}}
\end{picture}
\caption{A graph.}
\label{fig_graph}
\end{figure}

\subsection{Criteria for minimal disk systems}\label{criteria_minimal}

We give definitions of the criteria in the case of minimal disk systems first for simplicity.
Let $g$ be an integer greater than one, and $\left( \Sigma ,H,H^\ast \right) $ be a Heegaard splitting of genus $g$ of a closed, orientable, connected $3$-manifold.
Let ${\mathcal D}$ and ${\mathcal D}^\ast $ be minimal disk systems of $H$ and $H^\ast $, respectively, such that $\partial {\mathcal D}$ and $\partial {\mathcal D}^\ast $ intersect essentially in $\Sigma $.
Let $D_1,D_2,\ldots ,D_g$ denote the disks in ${\mathcal D}$, and $D^\ast _1,D^\ast _2,\ldots ,D^\ast _g$ denote the disks in ${\mathcal D}^\ast $.
Endow the boundary curves of these disks with orientations.
For $(i,\varepsilon ),(j,\delta )\in \{ 1,2,\ldots ,g\} \times \{ -,+\} $, let ${\mathcal G}_{i,\varepsilon ,j,\delta }$ denote the graph defined as:
\begin{itemize}
\item The vertex set is $\{ 1,2,\ldots ,g\} \times \{ -,+\} $;
\item Two vertices $(v,\nu )$, $(w,\mu )$ span an edge if there is a rectangle on $\Sigma $ of type $\left( \left( \partial D_i,\varepsilon \right) ,\left( \partial D_j,\delta \right) ;\left( \partial D^\ast _v,\nu \right) ,\left( \partial D^\ast _w,\mu \right) \right) $.
\end{itemize}
Let ${\mathcal G}$ denote the graph defined as:
\begin{itemize}
\item The vertex set is $\{ 1,2,\ldots ,g\} \times \{ -,+\} $;
\item Two vertices $(i,\varepsilon )$, $(j,\delta )$ span an edge if ${\mathcal G}_{i,\varepsilon ,j,\delta }$ is $2$-connected.
\end{itemize}
\begin{definition}
We say that $\left( {\mathcal D},{\mathcal D}^\ast \right) $, or the diagram formed by $\left( {\mathcal D},{\mathcal D}^\ast \right) $, satisfies the {\it rectangle condition} if ${\mathcal G}$ is $2$-connected.
\end{definition}
For $d\in \{ 1,2,\ldots ,g\} $ and $\kappa \in \{ -,+\} $, let $\varLambda _{d,\kappa }$ denote the set $\left( \{ 1,2,\ldots ,g\} \times \{ -,+\} \right) \setminus \left\{ (d,\kappa)\right\} $.
For $d\in \{ 1,2,\ldots ,g\} $ and $\left( i_-,\varepsilon _-\right) \in \varLambda _{d,-}$ and $\left( i_+,\varepsilon _+\right) \in \varLambda _{d,+}$, let ${\mathcal H}_{d,i_-,\varepsilon _-,i_+,\varepsilon _+}$ denote the graph defined as:
\begin{itemize}
\item The vertex set is $\{ 1,2,\ldots ,g\} \times \{ -,+\} $;
\item Two vertices $(v,\nu )$, $(w,\mu )$ span an edge if there is a rectangle on $\Sigma $ of type $\left( \left( \partial D_{i_-},\varepsilon _-\right) ,\partial D_d,\left( \partial D_{i_+},\varepsilon _+\right) ;\left( \partial D^\ast _v,\nu \right) ,\left( \partial D^\ast _w,\mu \right) \right) $.
\end{itemize}
For $d\in \{ 1,2,\ldots ,g\} $, let ${\mathcal H}_d$ denote the graph defined as:
\begin{itemize}
\item The vertex set is $\left( \{ -\} \times \varLambda _{d,-}\right) \cup \left( \{ +\} \times \varLambda _{d,+}\right) $;
\item Two vertices $(\kappa ,i,\varepsilon )$, $(\kappa ,j,\delta )$ span an edge if ${\mathcal G}_{i,\varepsilon ,j,\delta }$ is $2$-connected;
\item Two vertices $\left( -,i_-,\varepsilon _-\right) $, $\left( +,i_+,\varepsilon _+\right) $ span an edge if ${\mathcal H}_{d,i_-,\varepsilon _-,i_+,\varepsilon _+}$ is $2$-connected.
\end{itemize}
\begin{definition}\label{def_drc}
We say that $\left( {\mathcal D},{\mathcal D}^\ast \right) $, or the diagram formed by $\left( {\mathcal D},{\mathcal D}^\ast \right) $, satisfies the {\it double rectangle condition} if for every $d\in \{ 1,2,\ldots ,g\} $, the graph ${\mathcal H}_d$ is doubly $2$-connected with respect to $\left( \{ -\} \times \varLambda _{d,-},\{ +\} \times \varLambda _{d,+}\right) $, and the symmetric condition switching $H$ and $H^\ast $ holds.
\end{definition}
Note that these criteria does not essentially depend on orientations of the curves.

\subsection{Criteria for general disk systems}\label{criteria_general}

We now give the general definitions of the criteria.
These include both those in the previous subsection and those by Casson--Gordon and Lustig--Moriah \cite{Lustig-Moriah}.
Let $\left( \Sigma ,H,H^\ast \right) $ be a Heegaard splitting of genus greater than one, of a closed, orientable, connected $3$-manifold.
Let ${\mathcal D}$ and ${\mathcal D}^\ast $ be disk systems of $H$ and $H^\ast $, respectively, such that $\partial {\mathcal D}$ and $\partial {\mathcal D}^\ast $ intersect essentially in $\Sigma $.
Let $D_1,D_2,\ldots ,D_n$ denote the disks in ${\mathcal D}$, and $D^\ast _1,D^\ast _2,\ldots ,D^\ast _{n^\ast }$ denote the disks in ${\mathcal D}^\ast $.
Endow the boundary curves of these disks with orientations.
Let $H_1,H_2,\ldots ,H_m$ denote the complementary components of $\overline{\mathcal D}$ in $H$, and $H^\ast _1,H^\ast _2,\ldots ,H^\ast _{m^\ast }$ denote those of $\overline{{\mathcal D}^\ast }$ in $H^\ast $.
For each $k\in \{ 1,2,\ldots ,m\} $, let $A_k$ denote the subset of $\{ 1,2,\ldots ,n\} \times \{ -,+\} $ such that $(i,\varepsilon )$ belongs to $A_k$ if $\partial H_k$ has a path converging to $\partial D_i$ from the $\varepsilon $-side.
For example, $A_1$ is the set $\left\{ (1,-),(2,+),(3,-),(3,+)\right\} $ if $H_1$ appears as in Figure~\ref{fig_handle}.
We similarly define $A^\ast _\ell$ for each $\ell \in \left\{ 1,2,\ldots ,m^\ast \right\} $.
For $k\in \{ 1,2,\ldots ,m\} $ and $\ell \in \{ 1,2,\ldots ,m^\ast \} $ and $(i,\varepsilon ),(j,\delta )\in A_k$, let ${\mathcal G}_{k,\ell ,i,\varepsilon ,j,\delta }$ denote the graph defined as:
\begin{itemize}
\item The vertex set is $A^\ast _\ell $;
\item Two vertices $(v,\nu )$, $(w,\mu )$ span an edge if there is a rectangle on $\Sigma $ of type $\left( \left( \partial D_i,\varepsilon \right) ,\left( \partial D_j,\delta \right) ;\left( \partial D^\ast _v,\nu \right) ,\left( \partial D^\ast _w,\mu \right) \right) $.
\end{itemize}
For $k\in \{ 1,2,\ldots ,m\} $, let ${\mathcal G}_k$ denote the graph defined as:
\begin{itemize}
\item The vertex set is $A_k$;
\item Two vertices $(i,\varepsilon )$, $(j,\delta )$ span an edge if ${\mathcal G}_{k,\ell ,i,\varepsilon ,j,\delta }$ is $2$-connected for every $\ell \in \{ 1,2,\ldots ,m^\ast \} $.
\end{itemize}
\begin{definition}
We say that $\left( {\mathcal D},{\mathcal D}^\ast \right) $, or the diagram formed by $\left( {\mathcal D},{\mathcal D}^\ast \right) $, satisfies the {\it rectangle condition} if ${\mathcal G}_k$ is $2$-connected for every $k\in \{ 1,2,\ldots ,m\} $.
\end{definition}
For $d\in \{ 1,2,\ldots ,n\} $ and $\kappa \in \{ -,+\} $, let $k_{d,\kappa }$ denote the element in $\{ 1,2,\ldots ,m\} $ such that $(d,\kappa )$ belongs to $A_{k_{d,\kappa }}$, and $\varLambda _{d,\kappa }$ denote the set $A_{k_{d,\kappa }}\setminus \left\{ (d,\kappa )\right\} $.
For $\ell \in \{ 1,2,\ldots ,m^\ast \} $ and $d\in \{ 1,2,\ldots ,n\} $ and $\left( i_-,\varepsilon _-\right) \in \varLambda _{d,-}$ and $\left( i_+,\varepsilon _+\right) \in \varLambda _{d,+}$, let ${\mathcal H}_{\ell ,d,i_-,\varepsilon _-,i_+,\varepsilon _+}$ denote the graph defined as:
\begin{itemize}
\item The vertex set is $A^\ast _\ell $;
\item Two vertices $(v,\nu )$, $(w,\mu )$ span an edge if there is a rectangle on $\Sigma $ of type $\left( \left( \partial D_{i_-},\varepsilon _-\right) ,\partial D_d,\left( \partial D_{i_+},\varepsilon _+\right) ;\left( \partial D^\ast _v,\nu \right) ,\left( \partial D^\ast _w,\mu \right) \right) $.
\end{itemize}
For $d\in \{ 1,2,\ldots ,n\} $, let ${\mathcal H}_d$ denote the graph defined as:
\begin{itemize}
\item The vertex set is $\left( \{ -\} \times \varLambda _{d,-}\right) \cup \left( \{ +\} \times \varLambda _{d,+}\right) $;
\item Two vertices $(\kappa ,i,\varepsilon )$, $(\kappa ,j,\delta )$ span an edge if ${\mathcal G}_{k_{d,\kappa },\ell ,i,\varepsilon ,j,\delta }$ is $2$-connected for every $\ell \in \{ 1,2,\ldots ,m^\ast \} $;
\item Two vertices $\left( -,i_-,\varepsilon _-\right) $, $\left( +,i_+,\varepsilon _+\right) $ span an edge if ${\mathcal H}_{\ell ,d,i_-,\varepsilon _-,i_+,\varepsilon _+}$ is $2$-connected for every $\ell \in \{ 1,2,\ldots ,m^\ast \} $.
\end{itemize}
Then we define the {\it double rectangle condition} exactly as in Definition~\ref{def_drc}.
Note that these criteria does not essentially depend on orientations of the curves.

\begin{figure}[ht]
\begin{picture}(170,130)(0,0)
\put(0,0){\includegraphics{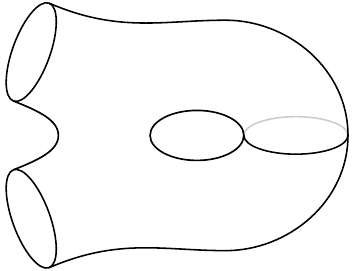}}
\put(60,93){$H_1$}
\put(9,102){$D_1$}
\put(25,96){$-$}
\put(9,22){$D_2$}
\put(25,28){$+$}
\put(137,62){$D_3$}
\end{picture}
\caption{A possible location of $H_1$.}
\label{fig_handle}
\end{figure}

\section{Examples}\label{example}

In this section, we give a family of examples satisfying the rectangle condition and the double rectangle condition.

We construct $3$-manifolds, Heegaard splittings, and pairs of minimal disk systems as follows.
Let $g$ be an integer greater than one, and let $H$ and $H^\ast $ be handlebodies with boundary surfaces of genus $g$.
Let ${\mathcal D}$ and ${\mathcal D}^\ast $ be minimal disk systems of $H$ and $H^\ast $, respectively, and let $D_1,D_2,\ldots ,D_g$ and $D^\ast _1,D^\ast _2,\ldots ,D^\ast _g$ denote their disks, respectively.
Let $h\colon \partial H^\ast \to \partial H$ be a homeomorphism that maps $\partial D_i^\ast $ to $\partial D_i$ for each $i\in \{ 1,2,\ldots ,g\} $.
Let $\gamma $ denote the simple, closed curve on $\partial H$ as in Figure~\ref{fig_curve_1}, and $\tau \colon \partial H\to \partial H$ denote the Dehn twist along $\gamma $.
Let $l$ be an integer such that $|l|\geq 2$, and $M$ denote the $3$-manifold obtained by gluing $H$ and $H^\ast $ by $\tau ^l\circ h$.
Let $\Sigma $ denote the common boundary of $H$ and $H^\ast $ in $M$.
Note that $\left( \Sigma ,H,H^\ast \right) $ is a Heegaard splitting of $M$ of genus $g$.
We restrict to the case where $g=3$ and $l=2$ for simplicity in the following paragraphs, and leave the other cases to the reader.

\begin{figure}[ht]
\begin{picture}(340,125)(0,0)
\put(0,0){\includegraphics{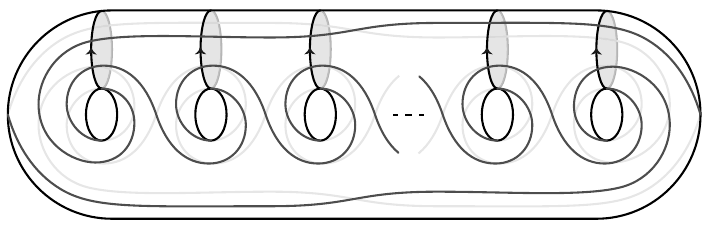}}
\put(44,110){$D_1$}
\put(97,110){$D_2$}
\put(150,110){$D_3$}
\put(190,110){$\cdots $}
\put(232,110){$D_{g-1}$}
\put(287,110){$D_g$}
\put(34,80){{\footnotesize $-$}}
\put(46,80){{\footnotesize $+$}}
\put(87,80){{\footnotesize $-$}}
\put(99,80){{\footnotesize $+$}}
\put(140,80){{\footnotesize $-$}}
\put(152,80){{\footnotesize $+$}}
\put(225,80){{\footnotesize $-$}}
\put(237,80){{\footnotesize $+$}}
\put(278,80){{\footnotesize $-$}}
\put(290,80){{\footnotesize $+$}}
\put(185,22){$\gamma $}
\end{picture}
\caption{Essential disks of $H$, orientations of their boundaries, and a simple, closed curve on $\partial H$.}
\label{fig_curve_1}
\end{figure}

We verify the criteria as follows.
Endow $\partial D_1$, $\partial D_2$, $\partial D_3$ with orientations as in Figure~\ref{fig_curve_1}, and $\partial D_1^\ast $, $\partial D_2^\ast $, $\partial D_3^\ast $ with orientations to agree with those of $\partial D_1$, $\partial D_2$, $\partial D_3$ via $h$.
Then the curves of $\partial {\mathcal D}^\ast $ appear on $\partial H$ as in Figure~\ref{fig_curve_2}.
Let ${\mathcal G}_{i,\varepsilon ,j,\delta }$, ${\mathcal G}$, $\varLambda _{d,\kappa }$, ${\mathcal H}_{d,i_-,\varepsilon _-,i_+,\varepsilon _+}$ and ${\mathcal H}_d$ be as in Subsection~\ref{criteria_minimal}.
Note that, if $\gamma $ has a subarc that leaves $D_i$ to the $\varepsilon $-side and goes without intersecting $\overline{\mathcal D}$ and arrives $D_j$ from the $\delta $-side, then a bunch of subarcs of $\partial \overline{{\mathcal D}^\ast }$ goes the same way, and the $\varepsilon $-side of $\partial D_i^\ast $ faces the $\delta $-side of $\partial D_j^\ast $ in the bunch.
These show that the graph ${\mathcal G}_{i,\varepsilon ,j,\delta }$ contains the subgraph as in the left of Figure~\ref{fig_subgraphs}, which shows that ${\mathcal G}_{i,\varepsilon ,j,\delta }$ is $2$-connected.
Similarly for the graph ${\mathcal H}_{d,i_-,\varepsilon _-,i_+,\varepsilon _+}$.
They show that ${\mathcal G}$ coincides with the graph in the left of Figure~\ref{fig_subgraphs}, and ${\mathcal H}_1$ is as shown in the right.
We can see that ${\mathcal G}$ is $2$-connected, and ${\mathcal H}_1$ is doubly $2$-connected with respect to $\left( \{ -\} \times \varLambda _{1,-},\{ +\} \times \varLambda _{1,+}\right) $.
Similarly for the graphs ${\mathcal H}_2$ and ${\mathcal H}_3$.
We can also see that the symmetric condition switching $H$ and $H^\ast $ holds.
Thus, $\left( {\mathcal D},{\mathcal D}^\ast \right) $ satisfies the rectangle condition and the double rectangle condition.
By Theorem~\ref{strongly_irreducible} and Corollary~\ref{goeritz_finiteness}, the Heegaard splitting $\left( \Sigma ,H,H^\ast \right) $ is strongly irreducible, and its Goeritz group is finite.

\begin{figure}[ht]
\begin{picture}(340,130)(0,0)
\put(0,0){\includegraphics{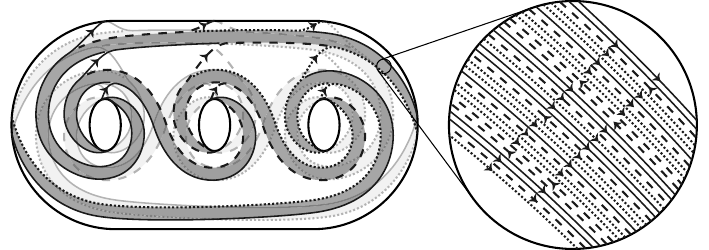}}
\put(42,115){$\partial D_1^\ast $}
\put(95,115){$\partial D_2^\ast $}
\put(148,115){$\partial D_3^\ast $}
\end{picture}
\caption{The three curves of $\partial {\mathcal D}^\ast $ on $\partial H$.}
\label{fig_curve_2}
\end{figure}

\begin{figure}[ht]
\begin{picture}(210,125)(0,0)
\put(0,30){\includegraphics{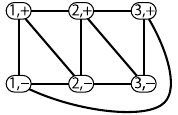}}
\put(120,0){\includegraphics{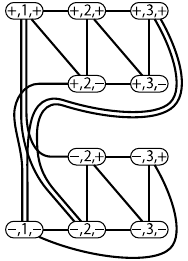}}
\end{picture}
\caption{A subgraph of ${\mathcal G}_{i,\varepsilon ,j,\delta }$ and the graph ${\mathcal H}_1$.}
\label{fig_subgraphs}
\end{figure}

We remark that the double rectangle condition fails for the pairs of maximal disk systems obtained by naively adding disks.
Let $D_4$, $D_5$, $D_6$ denote the essential disks of $H$ as in Figure~\ref{fig_curve_3}, and $D^\ast _4$, $D^\ast _5$, $D^\ast _6$ be essential disks of $H^\ast $ such that $h\left( \partial D_4^\ast \right) =\partial D_4$, $h\left( \partial D_5^\ast \right) =\partial D_5$, $h\left( \partial D_6^\ast \right) =\partial D_6$.
Let ${\mathcal E}$ and ${\mathcal E}^\ast $ denote the maximal disk systems $\left\{ D_1,D_2,\ldots ,D_6\right\} $ of $H$ and $\left\{ D^\ast _1,D^\ast _2,\ldots ,D^\ast _6\right\} $ of $H^\ast $, respectively.
Then the pair $\left( {\mathcal E},{\mathcal E}^\ast \right) $ fails the double rectangle condition, in the absence of rectangles of the type illustrated in dark gray in Figure~\ref{fig_curve_3}.
This supports that our generalization has some advantage over the original by Lustig--Moriah \cite{Lustig-Moriah}.

\begin{figure}[ht]
\begin{picture}(210,130)(0,0)
\put(7,6){\includegraphics{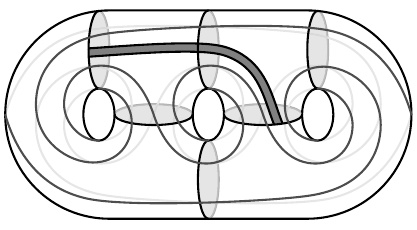}}
\put(0,59){$\gamma $}
\put(51,116){$D_1$}
\put(104,116){$D_2$}
\put(157,116){$D_3$}
\put(71,46){$D_4$}
\put(104,0){$D_5$}
\put(124,46){$D_6$}
\end{picture}
\caption{Additional essential disks of $H$, and an example of missing types of rectangles.}
\label{fig_curve_3}
\end{figure}

We also remark that their Hempel distance is equal to two.
On one hand, by the rectangle condition, the distance is greater than or equal to two.
On the other hand, by the definition, the sequence from $\partial D_1$ to $\partial D_1^\ast $ through the curve $C$ as in Figure~\ref{fig_curve_4} shows that the distance is less than or equal to two.

\begin{figure}[ht]
\begin{picture}(210,125)(0,0)
\put(7,0){\includegraphics{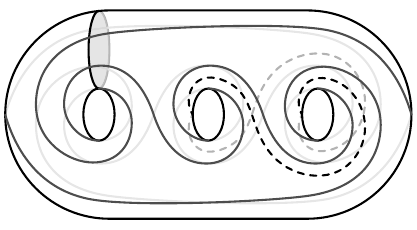}}
\put(51,110){$D_1$}
\put(0,53){$\gamma $}
\put(133,24){$C$}
\end{picture}
\caption{A simple, closed curve on $\Sigma $ disjoint from $\partial D_1$ and $\partial D_1^\ast $.}
\label{fig_curve_4}
\end{figure}

\section{Proofs}

In this section, we give proofs of the main theorems.

The following notions and facts play important roles throughout the proofs.
Let $H$ be a handlebody, and ${\mathcal D}$ be a disk system of $H$.
A {\it wave disk} with respect to ${\mathcal D}$ is a bigon $\Omega $ in $H$ such that
\begin{itemize}
\item $\Omega \cap \partial H$ and $\Omega \cap \overline{\mathcal D}$ are the two edges of the bigon $\Omega $, and
\item no bigon in $\partial H$ is cobounded by $\Omega \cap \partial H$ and any subarc of $\partial \overline{\mathcal D}$.
\end{itemize}
The arc $\Omega \cap \partial H$ is called a {\it wave} with respect to ${\mathcal D}$.
This generalizes the classical notion in the case where ${\mathcal D}$ is maximal.
If $E$ is an essential disk of $H$ such that $\{ E\} $ and ${\mathcal D}$ intersect essentially in $H$, then $E$ is disjoint from $\overline{\mathcal D}$ or contains a wave disk with respect to ${\mathcal D}$.
This can be seen by recalling that there is no closed component in $E\cap \overline{\mathcal D}$, and considering an outermost subdisk of $E$ cut off by the arcs of $E\cap \overline{\mathcal D}$.

We use the following convention.
For a handlebody $H$, and a disk system ${\mathcal D}$ of $H$, we would like to consider ``components cut off from $H$ by $\overline{\mathcal D}$'' and regard them as closed $3$-balls.
Note that the closure of a complementary component of $\overline{\mathcal D}$ in $H$ is, however, possibly a handlebody with one or more $1$-handles as in Figure~\ref{fig_handle}.
To make things clear, we take the universal covering space of $H$, denoted by $\widetilde{H}$, and the covering map, denoted by $\pi _H$.
Let $\widetilde{\mathcal D}$ denote the family of lifts in $\widetilde{H}$ of disks of ${\mathcal D}$.
Let ${\mathcal B}({\mathcal D})$ denote the family of closures of complementary components in $\widetilde{H}$ of the lifted disks.
Note that ${\mathcal B}({\mathcal D})$ consists of infinitely many closed $3$-balls.
For $B\in {\mathcal B}({\mathcal D})$, let $\widetilde{\mathcal D}_B$ denote the family of disks in $\widetilde{\mathcal D}$ contained in $\partial B$.
When the curves of $\partial {\mathcal D}$ are endowed with orientations, we regard the disks of ${\mathcal D}$ and $\widetilde{\mathcal D}$ as endowed with the induced orientations.
A disk $\widetilde{D}$ in $\widetilde{\mathcal D}_B$ is called the $\varepsilon $-{\it side lift} of $\pi _H\bigl( \widetilde{D}\bigr) $ in $B$, if $B$ is adjacent to $\widetilde{D}$ from the $\varepsilon $-side, for $\varepsilon \in \{ -,+\} $.

\subsection{Strong irreducibility}\label{proof_irreducibility}

We prove Theorem~\ref{strongly_irreducible}.
Let $\left( \Sigma ,H,H^\ast \right) $, ${\mathcal D}$, ${\mathcal D}^\ast $ be as in Subsection~\ref{criteria_general}.
Assume that $\left( {\mathcal D},{\mathcal D}^\ast \right) $ satisfies the rectangle condition but $\left( \Sigma ,H,H^\ast \right) $ is not strongly irreducible.
Then there exist an essential disk $E$ of $H$ and an essential disk $E^\ast $ of $H^\ast $ such that $E$ and $E^\ast $ are disjoint.
We can arrange that $\left\{ E\right\} $ and ${\mathcal D}$ intersect essentially in $H$, and similarly for $E^\ast $, after some isotopies if necessary.
Then $E$ is disjoint from $\overline{\mathcal D}$ or contains a wave disk with respect to ${\mathcal D}$, and similarly for $E^\ast $.
By the following lemma, we have a contradiction to the disjointness of $E$ and $E^\ast $.
\begin{lemma}\label{waves_intersect}
Let $\left( \Sigma ,H,H^\ast \right) $, ${\mathcal D}$, ${\mathcal D}^\ast $ be as in Subsection~\ref{criteria_general}.
Let $O$ be either an essential disk of $H$ disjoint from $\overline{\mathcal D}$ or a wave disk with respect to ${\mathcal D}$.
Define $O^\ast $ similarly in $H^\ast $.
If $\left( {\mathcal D},{\mathcal D}^\ast \right) $ satisfies the rectangle condition, then $O$ and $O^\ast $ have non-empty intersection.
\end{lemma}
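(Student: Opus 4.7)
The plan is to assume $O \cap O^\ast = \emptyset$ and derive a contradiction from the rectangle condition. Set $\beta := O \cap \Sigma$ and $\beta^\ast := O^\ast \cap \Sigma$; each is either a simple closed curve (when the respective object is an essential disk disjoint from the disk system) or an arc with both endpoints on a single curve $\partial D_i$ (when it is a wave disk), and the hypothesis becomes $\beta \cap \beta^\ast = \emptyset$ on $\Sigma$. Since the interior of $\beta$ is disjoint from $\partial \mathcal{D}$, $\beta$ lies in $\Sigma_k := \partial H_k \cap \Sigma$ for some $k$; similarly $\beta^\ast$ lies in $\Sigma^\ast_\ell := \partial H^\ast_\ell \cap \Sigma$ for some $\ell$. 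By the disk-system axiom each $H_k$ is a $3$-ball, so $\Sigma_k$ is a planar surface whose boundary circles are in bijection with $A_k$; the same holds for $\Sigma^\ast_\ell$ and $A^\ast_\ell$.

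The first main step is to convert $\beta$ into a partition of $A_k$ (minus a distinguished vertex in the wave case) into two non-empty parts. I lift $O$ to $\widetilde{O}$ in the ball $B \in \mathcal{B}(\mathcal{D})$ that projects homeomorphically to $H_k$. When $O$ is an essential disk disjoint from $\overline{\mathcal{D}}$, $\widetilde{O}$ is a properly embedded disk cutting $B$ into two sub-balls; essentiality of $\partial O$ on $\Sigma$ forces each sub-ball to contain at least one disk of $\widetilde{\mathcal{D}}_B$, and projection yields a partition of $A_k$ into two non-empty parts. When $O$ is a wave disk, $\widetilde{O}$ is a bigon in $B$ bisecting a unique disk $\widetilde{D}_i \in \widetilde{\mathcal{D}}_B$ that projects to $D_i$ with $B$ lying on its $\varepsilon$-side, so $(i,\varepsilon) \in A_k$; the wave condition, lifted to $\partial \widetilde{H}$, prevents either sub-ball from being bounded only by $\widetilde{O}$ together with half of $\widetilde{D}_i$, so we obtain a partition of $A_k \setminus \{(i,\varepsilon)\}$ into two non-empty parts. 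The same analysis applied to $\beta^\ast$ produces the analogous partition of $A^\ast_\ell$, possibly after removing a distinguished vertex $(v_0,\nu_0)$.

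The second main step invokes the rectangle condition twice. Since $\mathcal{G}_k$ is $2$-connected, it remains connected after removing the exceptional vertex $(i,\varepsilon)$ when present, so there is an edge $\{(j,\delta),(j',\delta')\}$ of $\mathcal{G}_k$ crossing the partition and avoiding $(i,\varepsilon)$. By the definition of edges of $\mathcal{G}_k$, the graph $\mathcal{G}_{k,\ell,j,\delta,j',\delta'}$ is $2$-connected. Applying the partition-and-connectivity argument on the $H^\ast$ side produces an edge $\{(v,\nu),(w,\mu)\}$ of this graph crossing the partition of $A^\ast_\ell$ and avoiding $(v_0,\nu_0)$; by definition this edge is witnessed by a rectangle $R$ on $\Sigma$ of type $((\partial D_j,\delta),(\partial D_{j'},\delta');(\partial D^\ast_v,\nu),(\partial D^\ast_w,\mu))$, and the interior of $R$ lies in $\Sigma_k \cap \Sigma^\ast_\ell$.

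To close the argument, I analyze $\beta \cap R$ and $\beta^\ast \cap R$ inside the rectangle. By construction the two corners of $R$ on $\partial D_j$ and $\partial D_{j'}$ lie on opposite sides of $\beta$ in $\Sigma_k$, and the corners on $\partial D^\ast_v$ and $\partial D^\ast_w$ lie on opposite sides of $\beta^\ast$ in $\Sigma^\ast_\ell$. Because $\beta$ has no interior intersection with $\partial \mathcal{D}$, at least one component of $\beta \cap R$ must be an arc joining the $\partial D^\ast_v$-side of $\partial R$ to the $\partial D^\ast_w$-side, separating the $\partial D_j$-side from the $\partial D_{j'}$-side inside the disk $R$; symmetrically, at least one component of $\beta^\ast \cap R$ joins the $\partial D_j$-side to the $\partial D_{j'}$-side. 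Two such arcs in a disk, connecting opposite pairs of boundary sides, must intersect, yielding $\beta \cap \beta^\ast \neq \emptyset$ inside $R$ and contradicting the assumption. The main obstacle will be the careful bookkeeping in the wave case: one must verify that the two edges produced above can indeed avoid the exceptional vertices, and that the endpoints of $\beta,\beta^\ast$, which lie on $\partial \mathcal{D} \cup \partial \mathcal{D}^\ast$, do not create degenerate overlaps with $\partial R$ in the edge cases where, for instance, an edge of $\mathcal{G}_k$ involves the opposite side $(i,-\varepsilon)$ of the same disk $D_i$.
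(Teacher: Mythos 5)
Your proposal is correct and takes essentially the same route as the paper's own proof: lift $O$ and $O^\ast$ to the cut-open $3$-balls, use the resulting non-trivial partitions together with the $2$-connectivity of $\mathcal{G}_k$ and then of $\mathcal{G}_{k,\ell ,j,\delta ,j',\delta '}$ to produce a rectangle $R$ of the prescribed type, and conclude that an arc of $O\cap \Sigma $ and an arc of $O^\ast \cap \Sigma $ must cross inside $R$. The ``bookkeeping'' you flag at the end is settled precisely by the separation statement you already set up in the lifted ball: because $\widetilde{O}$ separates the two specified side-lifts and the lift of $R$ attaches to those side-lifts, the crossing subarc of $O\cap \Sigma $ runs through the interior of $R$ between its two $\partial \mathcal{D}^\ast $-edges, so the degenerate cases (for instance $j=i$ with $\delta =-\varepsilon $) cause no trouble.
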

\begin{proof}
We use the notation in Subsection~\ref{criteria_general}.
Let $k_0$ denote the element in $\{ 1,2,\ldots ,m\} $ such that the closure of $H_{k_0}$ contains $O$, and $\ell _0$ denote that in $\{ 1,2,\ldots ,m^\ast \} $ for $O^\ast $.
Let $B_{k_0}$ denote the closure of a lift of $H_{k_0}$ in $\widetilde{H}$.
Define $B^\ast _{\ell _0}$ similarly over $H^\ast _{\ell _0}$.
Let $\widetilde{O}$ denote the lift of $O$ in $B_{k_0}$, and $\widetilde{O^\ast }$ denote the lift of $O^\ast $ in $B^\ast _{\ell _0}$.
Let $\widetilde{D}_{i,\varepsilon }$ denote the $\varepsilon $-side lift of $D_i$ in $B_{k_0}$ for $(i,\varepsilon )\in A_{k_0}$.
Define $\widetilde{D^\ast }_{v,\nu }$ similarly for $(v,\nu )\in A^\ast _{\ell _0}$.
Let $(I,J)$ denote the partition of $\bigl\{ (i,\varepsilon )\in A_{k_0}\bigm| \widetilde{D}_{i,\varepsilon }\cap \widetilde{O}=\emptyset \bigr\} $ such that $\widetilde{O}$ separates $\bigl\{ \widetilde{D}_{i,\varepsilon }\bigm| (i,\varepsilon )\in I\bigr\} $ and $\bigl\{ \widetilde{D}_{j,\delta }\bigm| (j,\delta )\in J\bigr\} $ in $B_{k_0}$.
Since the graph ${\mathcal G}_{k_0}$ is $2$-connected, there exist a member $\left( i_0,\varepsilon _0\right) $ in $I$ and a member $\left( j_0,\delta _0\right) $ in $J$ that span an edge in ${\mathcal G}_{k_0}$.
By the definition, the graph ${\mathcal G}_{k_0,\ell _0,i_0,\varepsilon _0,j_0,\delta _0}$ is $2$-connected.
Let $(V,W)$ denote the partition of $\bigl\{ (v,\nu )\in A^\ast _{\ell _0}\bigm| \; \widetilde{D^\ast }_{v,\nu }\cap \widetilde{O^\ast }=\emptyset \bigr\} $ such that $\widetilde{O^\ast }$ separates $\bigl\{ \widetilde{D^\ast }_{v,\nu }\bigm| (v,\nu )\in V\bigr\} $ and $\bigl\{ \widetilde{D^\ast }_{w,\mu }\bigm| (w,\mu )\in W\bigr\} $ in $B^\ast _{\ell _0}$.
There exist a member $\left( v_0,\nu _0\right) $ in $V$ and a member $\left( w_0,\mu _0\right) $ in $W$ that span an edge in ${\mathcal G}_{k_0,\ell _0,i_0,\varepsilon _0,j_0,\delta _0}$.
By the definition, there exists a rectangle $R$ on $\Sigma $ of type $\left( \left( \partial D_{i_0},\varepsilon _0\right) ,\left( \partial D_{j_0},\delta _0\right) ;\left( \partial D^\ast _{v_0},\nu _0\right) ,\left( \partial D^\ast _{w_0},\mu _0\right) \right) $, as illustrated in Figure~\ref{fig_intersect_1}.
Since $\widetilde{O}$ separates $\widetilde{D}_{i_0,\varepsilon _0}$ and $\widetilde{D}_{j_0,\delta _0}$ in $B_{k_0}$, the rectangle $R$ contains a subarc of $O\cap \Sigma $ that separates the two edges in $\partial D_{i_0}$ and $\partial D_{j_0}$.
Similarly $R$ also contains a subarc of $O^\ast \cap \Sigma $ that separates the two edges in $\partial D^\ast _{v_0}$ and $\partial D^\ast _{w_0}$.
It follows that $O\cap \Sigma $ and $O^\ast \cap \Sigma $ intersect in $R$.
\end{proof}

\begin{figure}[ht]
\begin{picture}(170,130)(0,0)
\put(0,0){\includegraphics{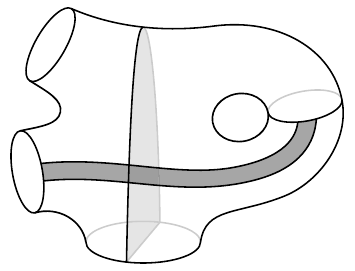}}
\put(66,70){$O$}
\put(6,44){$D_{i_0}$}
\put(142,78){$D_{j_0}$}
\put(35,57){$\partial D^\ast _{v_0}$}
\put(35,34){$\partial D^\ast _{w_0}$}
\put(130,27){\vector(-1,1){19}}
\put(132,20){$R$}
\end{picture}
\caption{The rectangle $R$ in a possible case where $O$ is a wave disk.}
\label{fig_intersect_1}
\end{figure}

\subsection{Finiteness of disk systems}\label{proof_finiteness}

We prove Theorem~\ref{finiteness} in this subsection.

We have the following relation between our criteria as their names suggest.
\begin{lemma}\label{lem_criteria_implication}
Let ${\mathcal D}$ and ${\mathcal D}^\ast $ be as in Subsection~\ref{criteria_general}.
If $\left( {\mathcal D},{\mathcal D}^\ast \right) $ satisfies the double rectangle condition, then $\left( {\mathcal D},{\mathcal D}^\ast \right) $ satisfies the rectangle condition.
\end{lemma}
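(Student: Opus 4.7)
My plan is to reduce the rectangle condition, namely the $2$-connectivity of ${\mathcal G}_k$ for every $k$, to a connectivity statement about induced subgraphs of the ${\mathcal H}_d$, which I hope to extract from the doubly $2$-connected hypothesis.

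First, I would fix any $k \in \{1, \ldots, m\}$ and any vertex $v = (d, \kappa) \in A_k$, so that $k = k_{d, \kappa}$ by definition. To prove ${\mathcal G}_k$ is $2$-connected, I must show that ${\mathcal G}_k \setminus \{v\}$ is connected. Its vertex set is $\varLambda_{d, \kappa}$. Unfolding the definitions, I would verify that under the natural identification $(i, \varepsilon) \leftrightarrow (\kappa, i, \varepsilon)$, the graph ${\mathcal G}_k \setminus \{v\}$ is isomorphic to the same-side induced subgraph ${\mathcal H}_d[\{\kappa\} \times \varLambda_{d, \kappa}]$ of ${\mathcal H}_d$. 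This is because the two relevant edge conditions coincide: both require ${\mathcal G}_{k, \ell, i, \varepsilon, j, \delta}$ to be $2$-connected for every $\ell \in \{1, \ldots, m^\ast\}$.

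Next, I plan to argue that ${\mathcal H}_d[\{\kappa\} \times \varLambda_{d, \kappa}]$ is connected, given that ${\mathcal H}_d$ is doubly $2$-connected with respect to the partition $(\{-\} \times \varLambda_{d,-},\ \{+\} \times \varLambda_{d,+})$. The approach is a path-straightening argument: any two vertices $p, q \in \{\kappa\} \times \varLambda_{d, \kappa}$ are joined by a path in ${\mathcal H}_d$ itself, since the doubly $2$-connected hypothesis certainly makes ${\mathcal H}_d$ connected, but such a path may use cross-edges detouring through $\{-\kappa\} \times \varLambda_{d, -\kappa}$. I would replace each such detour by a same-side path, leveraging the key geometric fact that every cross-edge in ${\mathcal H}_d$ arises from a family of $3$-piece rectangles through $\partial D_d$, each of which decomposes by definition into two $2$-piece rectangles on opposite sides of $\partial D_d$. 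Combined with the robust connectivity of ${\mathcal H}_d$ even after two carefully chosen deletions, this should let me produce same-side connections between $p$ and $q$.

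The main obstacle is this final step, namely converting cross-edge detours into same-side paths. Purely as an abstract graph-theoretic implication, doubly $2$-connectedness does not force connectivity of the induced subgraph on either side of the partition: for instance, $K_{2,2}$ is doubly $2$-connected with respect to its bipartition, yet its restriction to each side is disconnected. Thus the proof cannot be entirely combinatorial, and must use the specific geometric decomposition of $3$-piece rectangles into pairs of $2$-piece rectangles to manufacture the needed same-side edges out of the cross-edges. Making this rectangle-stitching precise is the crux of the argument.
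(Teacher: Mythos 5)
Your reduction sets the pivot equal to the deleted vertex, and the step you yourself flag as the crux genuinely fails there. Having removed $(d,\kappa)$ from ${\mathcal G}_k$, you must show that the induced subgraph of ${\mathcal H}_d$ on $\{\kappa\}\times\varLambda _{d,\kappa }$ is connected, and the only geometric input you propose is the decomposition of each rectangle of type $\left( \left( \partial D_{i_-},\varepsilon _-\right) ,\partial D_d,\left( \partial D_{i_+},\varepsilon _+\right) ;\cdot ,\cdot \right) $ into two ordinary rectangles. But that decomposition only shows that ${\mathcal H}_{\ell ,d,i_-,\varepsilon _-,i_+,\varepsilon _+}$ is a spanning subgraph of the graph ${\mathcal G}_{k_{d,\kappa },\ell ,\cdot ,\cdot ,d,\kappa }$ attached to the pair consisting of the $\kappa $-side endpoint of the cross-edge and $(d,\kappa )$ itself; hence every edge of ${\mathcal G}_{k}$ you can manufacture this way is incident to the deleted vertex $(d,\kappa )$, and no edge between two distinct vertices of $\varLambda _{d,\kappa }$ is produced. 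So the detours through the $-\kappa $ side cannot be straightened into same-side paths by rectangle-stitching, and (as your $K_{2,2}$ example shows) no purely combinatorial argument from the doubly $2$-connectedness of this single ${\mathcal H}_d$ can substitute for it. As written, the proof is not recoverable along these lines.

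The paper's proof decouples the deleted vertex from the pivot, which is exactly what makes the same geometric observation usable. To show ${\mathcal G}_k$ stays connected after deleting a vertex $(i,\varepsilon )$, choose a \emph{different} vertex $(d,\kappa )\in A_k$ (possible since $A_k$ has at least three elements) and use ${\mathcal H}_d$ for that $d$: the map ${\mathcal H}_d\to {\mathcal G}_k$ sending $(\kappa ,j,\delta )$ to $(j,\delta )$ and collapsing all of $\{ -\kappa \} \times \varLambda _{d,-\kappa }$ to $(d,\kappa )$ is simplicial, precisely because your rectangle decomposition sends each cross-edge to an edge of ${\mathcal G}_k$ at $(d,\kappa )$ --- a vertex that has \emph{not} been deleted. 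Doubly $2$-connectedness allows deleting $(\kappa ,i,\varepsilon )$ together with any one vertex of the $-\kappa $ side while keeping ${\mathcal H}_d$ connected, and the image of what remains is a connected spanning subgraph of ${\mathcal G}_k\setminus \{ (i,\varepsilon )\} $. Your identification of ${\mathcal G}_k\setminus \{ (d,\kappa )\} $ with the $\kappa $-side of ${\mathcal H}_d$ and your use of the three-piece rectangles are correct ingredients, but they must be deployed with the pivot chosen different from the vertex being deleted.
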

\begin{proof}
We use the notation in Subsection~\ref{criteria_general}.
Let $k$ be any element in $\{ 1,2,\ldots ,m\} $.
We consider deleting an arbitrary vertex $(i,\varepsilon )$ from the graph ${\mathcal G}_k$.
Since ${\mathcal G}_k$ has at least three vertices, we can take another vertex $(d,\kappa )$ than $(i,\varepsilon )$.
By the double rectangle condition, the graph ${\mathcal H}_d$ is doubly $2$-connected with respect to $\left( \{ -\} \times \varLambda _{d,-},\{ +\} \times \varLambda _{d,+}\right) $.
Hence, ${\mathcal H}_d$ is connected even after deleting the vertex $(\kappa ,i,\varepsilon )$ and any vertex in $\{ -\kappa \} \times \varLambda _{d,-\kappa }$.
There is a natural simplicial map from ${\mathcal H}_d$ to ${\mathcal G}_k$ whereby $\{ \kappa \} \times \varLambda _{d,\kappa }$ injects to $A_k\setminus \{ (d,\kappa )\} $ and $\{ -\kappa \} \times \varLambda _{d,-\kappa }$ collapses to $(d,\kappa )$.
These show that ${\mathcal G}_k$ is connected even after deleting $(i,\varepsilon )$, to conclude that ${\mathcal G}_k$ is $2$-connected.
\end{proof}

We use the following notation.
Let $g$ be an integer greater than one, and $\left( \Sigma ,H,H^\ast \right) $ be a Heegaard splitting of genus $g$ of a closed, orientable, connected $3$-manifold.
We fix maximal disk systems ${\mathcal E}$ and ${\mathcal E}^\ast $ of $H$ and $H^\ast $, respectively, such that $\partial {\mathcal E}$ and $\partial {\mathcal E}^\ast $ intersect essentially in $\Sigma $.
Let $\mathfrak{DR}$ denote the set of pairs of disk systems of $H$ and $H^\ast $ such that $\left( {\mathcal D},{\mathcal D}^\ast \right) $ belongs to $\mathfrak{DR}$ if
\begin{itemize}
\item $\partial {\mathcal D}$ and $\partial {\mathcal D}^\ast $ intersect essentially in $\Sigma $,
\item $\left( {\mathcal D},{\mathcal D}^\ast \right) $ satisfies the double rectangle condition,
\item ${\mathcal E}$ and ${\mathcal D}$ intersect essentially in $H$,
\item ${\mathcal E}^\ast $ and ${\mathcal D}^\ast $ intersect essentially in $H^\ast $,
\item $\partial {\mathcal E}$ and $\partial {\mathcal D}^\ast $ intersect essentially in $\Sigma $,
\item $\partial {\mathcal E}^\ast $ and $\partial {\mathcal D}$ intersect essentially in $\Sigma $, and
\item $\overline{\mathcal D}\cap \overline{{\mathcal D}^\ast }\cap \overline{\mathcal E}=\overline{\mathcal D}\cap \overline{{\mathcal D}^\ast }\cap \overline{{\mathcal E}^\ast }=\overline{\mathcal D}\cap \overline{\mathcal E}\cap \overline{{\mathcal E}^\ast }=\overline{{\mathcal D}^\ast }\cap \overline{\mathcal E}\cap \overline{{\mathcal E}^\ast }=\emptyset $.
\end{itemize}
We also define ${\mathfrak R}$ by dropping ``double'' in the second condition.
Note that ${\mathfrak R}$ contains $\mathfrak{DR}$ by Lemma~\ref{lem_criteria_implication}.
Note also that if $\left( {\mathcal D},{\mathcal D}^\ast \right) $ satisfies the top two conditions, then we can arrange the remaining conditions by isotopies of ${\mathcal D}$ and ${\mathcal D}^\ast $ in $H$ and $H^\ast $, respectively.
The task in this subsection is, therefore, to show the finiteness of $\mathfrak{DR}$ modulo isotopies in the respective hadlebodies.

First, we consider some $1$-dimensional objects, and show the finiteness of their types.
Let ${\mathcal D}$ and ${\mathcal D}'$ be disk systems of $H$ intersecting essentially in $H$.
An {\it intersection arc} of ${\mathcal D}$ with respect to ${\mathcal D}'$ is a component arc of $\overline{\mathcal D}\cap \overline{{\mathcal D}'}$.
A lift of it in $\widetilde{H}$ is called an {\it intersection arc} of $\widetilde{\mathcal D}$ with respect to $\widetilde{{\mathcal D}'}$.
An {\it isolated circle} of ${\mathcal D}$ with respect to ${\mathcal D}'$ is a component circle of $\partial \overline{\mathcal D}$ disjoint from $\overline{{\mathcal D}'}$.
A {\it connecting arc} of ${\mathcal D}$ with respect to ${\mathcal D}'$ is the closure of a component arc of $\partial \overline{\mathcal D}\setminus \overline{{\mathcal D}'}$.
A lift of it in $\widetilde{H}$ is called a {\it connecting arc} of $\widetilde{\mathcal D}$ with respect to $\widetilde{{\mathcal D}'}$.
Let $\left( {\mathcal D}_1,{\mathcal D}_1^\ast \right) $ and $\left( {\mathcal D}_2,{\mathcal D}_2^\ast \right) $ be pairs in ${\mathfrak R}$.
Intersection arcs $\alpha _1$ and $\alpha _2$ of ${\mathcal D}_1$ and ${\mathcal D}_2$, respectively, with respect to ${\mathcal E}$ are said to be $\left( {\mathcal E},{\mathcal E}^\ast \right) $-{\it parallel} if $\left( \alpha _1,\partial \alpha _1\right) $ and $\left( \alpha _2,\partial \alpha _2\right) $ are isotopic in $\left( \overline{\mathcal E},\partial \overline{\mathcal E}\setminus \overline{{\mathcal E}^\ast }\right) $.
Isolated circles $C_1$ and $C_2$ of ${\mathcal D}_1$ and ${\mathcal D}_2$, respectively, with respect to ${\mathcal E}$ are said to be $\left( {\mathcal E},{\mathcal E}^\ast \right) $-{\it parallel} if $C_1$ and $C_2$ are isotopic in $\Sigma \setminus \overline{\mathcal E}$.
Connecting arcs $\beta _1$ and $\beta _2$ of ${\mathcal D}_1$ and ${\mathcal D}_2$, respectively, with respect to ${\mathcal E}$ are said to be $\left( {\mathcal E},{\mathcal E}^\ast \right) $-{\it parallel} if $\left( \beta _1,\partial \beta _1\right) $ and $\left( \beta _2,\partial \beta _2\right) $ are isotopic in $\left( \Sigma ,\partial \overline{\mathcal E}\setminus \overline{{\mathcal E}^\ast }\right) $.
Let $\mathcal{IA}$, $\mathcal{IC}$, $\mathcal{CA}$ denote the sets
\begin{gather*}
\bigl\{ \, \text{intersection arcs of ${\mathcal D}$ with respect to ${\mathcal E}$}\bigm| \left( {\mathcal D},{\mathcal D}^\ast \right) \in {\mathfrak R}\bigr\} \big/ \text{$\left( {\mathcal E},{\mathcal E}^\ast \right) $-parallel},\\
\bigl\{ \, \text{isolated circles of ${\mathcal D}$ with respect to ${\mathcal E}$}\bigm| \left( {\mathcal D},{\mathcal D}^\ast \right) \in {\mathfrak R}\bigr\} \big/ \text{$\left( {\mathcal E},{\mathcal E}^\ast \right) $-parallel},\\
\bigl\{ \, \text{connecting arcs of ${\mathcal D}$ with respect to ${\mathcal E}$}\bigm| \left( {\mathcal D},{\mathcal D}^\ast \right) \in {\mathfrak R}\bigr\} \big/ \text{$\left( {\mathcal E},{\mathcal E}^\ast \right) $-parallel},
\end{gather*}
respectively.
\begin{lemma}\label{lem_arc_type}
The sets $\mathcal{IA}$, $\mathcal{IC}$, $\mathcal{CA}$ are finite.
\end{lemma}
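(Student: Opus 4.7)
The plan is to handle each of the three sets $\mathcal{IA}$, $\mathcal{IC}$, $\mathcal{CA}$ separately, in each case localizing the relevant arc or circle to a piece of the decomposition of $\Sigma$ (or of the disks of $\mathcal{E}$) induced by the fixed maximal disk systems $\mathcal{E}$, $\mathcal{E}^\ast$, and then bounding its isotopy classes there.

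For $\mathcal{IA}$, I would observe that any intersection arc $\alpha$ is a properly embedded arc in a single disk $E_i\in\mathcal{E}$, and that the isotopy defining $(\mathcal{E},\mathcal{E}^\ast)$-parallelism takes place in $\overline{\mathcal{E}}$, a disjoint union of disks, so $\alpha$ cannot move between different $E_i$. The endpoints lie in $\partial E_i$ minus the fixed finite set $\partial E_i\cap\partial\overline{\mathcal{E}^\ast}$, hence in a disjoint union of finitely many open intervals. Since a disk with finitely many marked points on its boundary admits only finitely many isotopy classes of properly embedded simple arcs whose endpoints lie in prescribed open subarcs of the boundary, summing over the finitely many disks of $\mathcal{E}$ gives the finiteness of $\mathcal{IA}$.

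For $\mathcal{IC}$, since $\mathcal{E}$ is maximal, $\Sigma\setminus\overline{\mathcal{E}}$ is a disjoint union of $2g-2$ pair-of-pants components. Any isolated circle is an essential simple closed curve of $\Sigma$ lying in one of these components, and is therefore isotopic there to one of its three boundary circles. This immediately bounds the number of classes in $\mathcal{IC}$.

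For $\mathcal{CA}$, a connecting arc $\beta$ has its interior in a single pair-of-pants component $P$ of $\Sigma\setminus\partial\overline{\mathcal{E}}$, with endpoints on $\partial P$ in the complement of the fixed finite set $\partial P\cap\overline{\mathcal{E}^\ast}$. The no-bigon hypothesis between $\partial\overline{\mathcal{D}}$ and $\partial\overline{\mathcal{E}}$, coming from $\mathcal{D}$ and $\mathcal{E}$ intersecting essentially in $H$, ensures that $\beta$ is essential in $P$. Invoking additionally the no-bigon hypothesis between $\partial\overline{\mathcal{D}}$ and $\partial\overline{\mathcal{E}^\ast}$, I would argue that $\beta$ is a normal arc in the cellular decomposition of $P$ cut out by $\partial\overline{\mathcal{E}^\ast}\cap P$, and then conclude that normal essential arcs give only finitely many classes in $(\Sigma,\partial\overline{\mathcal{E}}\setminus\overline{\mathcal{E}^\ast})$. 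Summing over the finitely many pair-of-pants components then yields the claim.

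The main obstacle is the connecting-arc case. The delicate point is that essential simple arcs in a pair of pants can, a priori, be arbitrarily ``twisted'' around a boundary component, producing infinitely many isotopy classes. The role of the essential-intersection hypothesis with $\partial\overline{\mathcal{E}^\ast}$ is precisely to rule out such twisting: a multiply wound arc would have to cross some arc of $\partial\overline{\mathcal{E}^\ast}\cap P$ repeatedly, and successive crossings without intervening combinatorial structure would create a bigon forbidden by the hypothesis. Making this normal-form argument rigorous, using the cellular decomposition of $\Sigma$ by $\partial\overline{\mathcal{E}}\cup\partial\overline{\mathcal{E}^\ast}$, is the main technical step I foresee.
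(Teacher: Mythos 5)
Your treatments of $\mathcal{IA}$ and $\mathcal{IC}$ are fine and essentially what the paper does (arcs in a disk with endpoints in prescribed complementary intervals of finitely many marked points, and boundary-parallel curves in pair-of-pants components, give finitely many classes). The gap is in the $\mathcal{CA}$ case, and it is exactly at the step you flag as the main technical point. The claim that a connecting arc which winds many times around a component of $\partial \overline{\mathcal E}$ must create a bigon with $\partial \overline{{\mathcal E}^\ast }$ is false: between two successive crossings of the same arc of $\partial \overline{{\mathcal E}^\ast }\cap P$, the winding subarc closes up against a subarc of that $\mathcal E^\ast$-arc into a curve parallel to the (essential) boundary component of $P$ being wound around, not into a bigon. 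So arbitrarily twisted connecting arcs can be in minimal (``normal'') position with respect to both $\partial \overline{\mathcal E}$ and $\partial \overline{{\mathcal E}^\ast }$; and normal position does not bound the number of $\left( {\mathcal E},{\mathcal E}^\ast \right)$-parallelism classes, since arcs with different winding numbers relative to the endpoint intervals are not isotopic in $\left( \Sigma ,\partial \overline{\mathcal E}\setminus \overline{{\mathcal E}^\ast }\right) $. A symptom of the problem is that your argument never uses that $\left( {\mathcal D},{\mathcal D}^\ast \right) $ lies in ${\mathfrak R}$, i.e.\ never uses the rectangle condition; but without it the statement is false (e.g.\ take images of a fixed disk system under high powers of the twist along a disk of ${\mathcal E}$: the connecting arcs fall into infinitely many parallelism classes while all the essential-intersection conditions can be maintained).

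The paper closes exactly this loophole with the rectangle condition. Using Theorem~\ref{strongly_irreducible}, each disk of ${\mathcal E}$ meets each disk $E^\ast$ of ${\mathcal E}^\ast$, which provides reference arcs $\widetilde{\theta }_{p,q}\subset \partial \widetilde{H}\cap Y_p$ lying over $\partial E^\ast$ in each ball $Y_p$ of ${\mathcal B}({\mathcal E})$. One then measures the twisting number $t$ of a connecting arc along $\partial \widetilde{E}_{p,q}$ and proves $\left| t-s_{p,q}\right| \leq 2$: if the twisting deviated by more, an outermost intersection arc of $\widetilde{\mathcal D}$ in $\widetilde{E}_{p,q}$ would force $\pi _H\bigl( \widetilde{\theta }_{p,q}\bigr) $ to contain a wave with respect to ${\mathcal D}$ which, after a small shift, is disjoint from $E^\ast$, contradicting Lemma~\ref{waves_intersect}. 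Finiteness of $\mathcal{CA}$ then follows by injecting it into the finite set indexed by the ball, the two endpoint intervals, and the two (uniformly bounded) twisting numbers. Some argument of this kind, converting excessive twisting into a wave disjoint from ${\mathcal E}^\ast$ and invoking the rectangle condition, is what your proposal is missing.
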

\begin{proof}
We can see the finiteness of $\mathcal{IA}$ similarly to \cite[Lemma~3.3]{Lustig-Moriah}, and that of $\mathcal{IC}$ more easily.
We show that of $\mathcal{CA}$ as follows, similarly to \cite[Lemma~3.4]{Lustig-Moriah}.
Let $Y_1,Y_2,\ldots ,Y_{2g-2}$ be $3$-balls in ${\mathcal B}\left( {\mathcal E}\right) $ whose images under $\pi _H$ cover the whole handlebody $H$.
Let $\widetilde{E}_{p,1}$, $\widetilde{E}_{p,2}$, $\widetilde{E}_{p,3}$ denote the disks of $\widetilde{\mathcal E}_{Y_p}$ for each $p\in \{ 1,2,\ldots ,2g-2\} $.
We may suppose that ${\mathfrak R}$ is not empty.
It follows by Theorem~\ref{strongly_irreducible} that $\pi _H\bigl( \widetilde{E}_{p,q}\bigr) $ and any disk $E^\ast $ in ${\mathcal E}^\ast $ have non-empty intersection for each $q\in \{ 1,2,3\} $.
The lift in $\partial \widetilde{E}_{p,q}$ of each of the intersection points is adjacent to a closed arc properly embedded in $\partial \widetilde{H}\cap Y_p$ whose image under $\pi _H$ lies in $\partial E^\ast $.
Fix such a closed arc $\widetilde{\theta }_{p,q}$ for $p\in \{ 1,2,\ldots ,2g-2\} $ and $q\in \{ 1,2,3\} $.
Let $s_{p,q}$ denote the {\it twisting number} of $\widetilde{\theta }_{p,q}$ along $\partial \widetilde{E}_{p,q}$, that is, the number of twists relative to a reference arc on $\partial \widetilde{H}\cap Y_p$ and an orientation of $\partial \widetilde{E}_{p,q}$.
The ambiguities in those choices do not affect our conclusion below.
Let $S$ denote the finite set
\begin{equation*}
\bigl\{ s\in {\mathbb Z}\bigm| p\in \{ 1,2,\ldots ,2g-2\} ,\ q\in \{ 1,2,3\} ,\ \left| s-s_{p,q}\right| \leq 2\bigr\} .
\end{equation*}
Now, we take a pair $\left( {\mathcal D},{\mathcal D}^\ast \right) $ in ${\mathfrak R}$ and a connecting arc $\beta $ of ${\mathcal D}$ with respect to ${\mathcal E}$.
Suppose that $p$ is the index in $\{ 1,2,\ldots ,2g-2\} $ such that $\pi _H\left( Y_p\right) $ contains $\beta $.
Let $\widetilde{\beta }$ denote the lift of $\beta $ in $Y_p$.
Suppose that $q$ is one of the indices in $\{ 1,2,3\} $ such that $\widetilde{\beta }$ has an endpoint in $\partial \widetilde{E}_{p,q}$.
Let $t$ denote the twisting number of $\widetilde{\beta }$ along $\partial \widetilde{E}_{p,q}$.
The following claim guarantees that $t$ belongs to $S$.
\begin{claim}
$\left| t-s_{p,q}\right| \leq 2$.
\end{claim}
\begin{proof}
Assume that $\left| t-s_{p,q}\right| >2$.
Note that $\widetilde{E}_{p,q}$ contains one or more intersection arcs of $\widetilde{\mathcal D}$ with respect to $\widetilde{\mathcal E}$, since $\widetilde{\beta }$ has an endpoint there.
Let $\widetilde{\alpha }$ be outermost one of them.
Let $\widetilde{\beta }_1$ and $\widetilde{\beta }_2$ denote the connecting arcs of $\widetilde{\mathcal D}$ with respect to $\widetilde{\mathcal E}$ in $Y_p$ adjacent to $\partial \widetilde{\alpha }$.
Let $t_1$ and $t_2$ denote the twisting numbers of $\widetilde{\beta }_1$ and $\widetilde{\beta }_2$, respectively, along $\partial \widetilde{E}_{p,q}$.
We have either that $\widetilde{\beta }=\widetilde{\beta }_1$ or that $\widetilde{\beta }=\widetilde{\beta }_2$, or that $\widetilde{\beta }$ is disjoint from $\widetilde{\beta }_1$ and $\widetilde{\beta }_2$.
It follows that $\left| t_1-t\right| \leq 1$ and $\left| t_2-t\right| \leq 1$, and hence that $\left| t_1-s_{p,q}\right| >1$ and $\left| t_2-s_{p,q}\right| >1$.
Then $\pi _H\bigl( \widetilde{\theta }_{p,q}\bigr) $ contains a wave $\omega $ with respect to ${\mathcal D}$ as illustrated in Figure~\ref{fig_twists}.
By shifting $\omega $ slightly, we obtain a wave with respect to ${\mathcal D}$ disjoint from $E^\ast $.
This contradicts the rectangle condition by Lemma~\ref{waves_intersect}.
\end{proof}
\noindent
Hence, we obtain an injection from $\mathcal{CA}$ to the finite set
\begin{equation*}
\{ 1,2,\ldots ,2g-2\} \times \pi _0\left( \partial \overline{{\mathcal E}}\setminus \overline{{\mathcal E}^\ast }\right) \times \pi _0\left( \partial \overline{{\mathcal E}}\setminus \overline{{\mathcal E}^\ast }\right) \times S\times S
\end{equation*}
by assigning $\left( p,e,e',t,t'\right) $ to $[\beta ]$ if $\beta $ has the endpoints in $e$ and $e'$, and has a lift in $Y_p$ with twisting numbers $t$ and $t'$, respectively.
\end{proof}

\begin{figure}[ht]
\begin{picture}(260,320)(0,0)
\put(0,120){\includegraphics{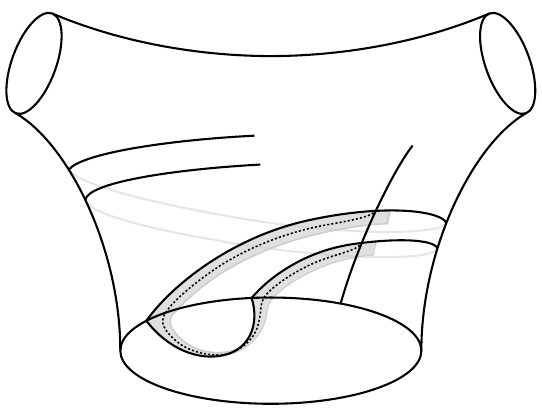}}
\put(60,273){$Y_p$}
\put(126,255){$\widetilde{\beta }_1$}
\put(129,237){$\widetilde{\beta }_2$}
\put(198,254){$\widetilde{\theta }_{p,q}$}
\put(221,211){\vector(-1,0){42}}
\put(225,208){$\widetilde{\omega }$}
\put(94,139){$\widetilde{\alpha }$}
\put(152,146){$\widetilde{E}_{p,q}$}
\put(70,0){\includegraphics{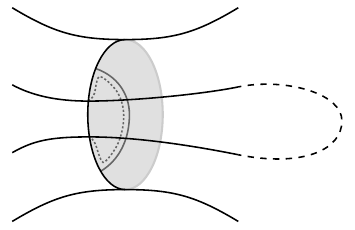}}
\put(65,63){$\widetilde{\theta }_{p,q}$}
\put(129,72){$\widetilde{\alpha }$}
\put(90,74){$\widetilde{\beta }_1$}
\put(102,76){\vector(2,-1){12}}
\put(90,25){$\widetilde{\beta }_2$}
\put(102,30){\vector(2,1){12}}
\put(168,69){$\widetilde{\omega }$}
\end{picture}
\caption{The top shows arcs on $\partial Y_p$ and a subarc $\widetilde{\omega }$ whose image under $\pi _H$ is a wave $\omega $ with respect to ${\mathcal D}$.
The gray band represents a part of a disk in $\widetilde{\mathcal D}$, and the dots represent an arc whose image and $\omega $ cobound a wave disk.
The bottom shows them so that the disk in $\widetilde{\mathcal D}$ looks plain.}
\label{fig_twists}
\end{figure}

Second, we consider some $2$-dimensional objects, and show the finiteness of their types.
For disk systems ${\mathcal D}$ and ${\mathcal D}'$ of $H$ intersecting essentially in $H$, a {\it disk piece} of ${\mathcal D}$ with respect to ${\mathcal D}'$ is the closure of a component of $\overline{\mathcal D}\setminus \overline{{\mathcal D}'}$.
A lift of it in $\widetilde{H}$ is called a {\it disk piece} of $\widetilde{\mathcal D}$ with respect to $\widetilde{{\mathcal D}'}$.
For pairs $\left( {\mathcal D}_1,{\mathcal D}_1^\ast \right) $ and $\left( {\mathcal D}_2,{\mathcal D}_2^\ast \right) $ in ${\mathfrak R}$, disk pieces $\Delta _1$ and $\Delta _2$ of ${\mathcal D}_1$ and ${\mathcal D}_2$, respectively, with respect to ${\mathcal E}$ are said to be $\left( {\mathcal E},{\mathcal E}^\ast \right) $-{\it parallel} if $\left( \Delta _1,\Delta _1\cap \overline{\mathcal E},\Delta _1\cap \Sigma ,\Delta _1\cap \overline{{\mathcal E}^\ast }\right) $ and $\left(  \Delta _2,\Delta _2\cap \overline{\mathcal E},\Delta _2\cap \Sigma ,\Delta _2\cap \overline{{\mathcal E}^\ast }\right) $ are isotopic in $\left( H,\overline{\mathcal E},\Sigma ,\partial \overline{{\mathcal E}^\ast }\right) $.
We have the following, similarly to \cite[Proposition~3.6]{Lustig-Moriah}.
\begin{lemma}\label{lem_disk_piece_type}
The set
\begin{equation*}
\bigl\{ \, \text{disk pieces of ${\mathcal D}$ with respect to ${\mathcal E}$}\bigm| \left( {\mathcal D},{\mathcal D}^\ast \right) \in {\mathfrak R}\bigr\} \big/ \text{$\left( {\mathcal E},{\mathcal E}^\ast \right) $-parallel}
\end{equation*}
is finite.
\end{lemma}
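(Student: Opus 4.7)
The plan is to follow the template of \cite[Proposition~3.6]{Lustig-Moriah}, bootstrapping from the finite sets $\mathcal{IA}$, $\mathcal{IC}$, $\mathcal{CA}$ produced in Lemma~\ref{lem_arc_type}. Given a disk piece $\Delta $ of some $\mathcal{D}$ with respect to $\mathcal{E}$ with $\left( \mathcal{D},\mathcal{D}^\ast \right) \in \mathfrak{R}$, the boundary $\partial \Delta $ is either a single isolated circle (when $\Delta \cap \overline{\mathcal{E}}=\emptyset $) carrying a type in $\mathcal{IC}$, or a cyclic concatenation alternating between intersection arcs in $\overline{\mathcal{E}}$ and connecting arcs in $\Sigma $, each carrying a $\left( \mathcal{E},\mathcal{E}^\ast \right) $-parallelism class in $\mathcal{IA}$ or $\mathcal{CA}$. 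Recording the cyclic sequence $T(\Delta )$ of these classes reduces the lemma to two claims: that $T$ takes only finitely many values, and that $T\left( \Delta _1\right) =T\left( \Delta _2\right) $ forces $\Delta _1$ and $\Delta _2$ to be $\left( \mathcal{E},\mathcal{E}^\ast \right) $-parallel.

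The second claim I would handle by splicing the $\left( \mathcal{E},\mathcal{E}^\ast \right) $-parallelisms of the individual boundary arcs into an ambient isotopy of $\left( \Sigma \cup \overline{\mathcal{E}},\partial \overline{\mathcal{E}}\setminus \overline{\mathcal{E}^\ast }\right) $ that carries $\partial \Delta _1$ onto $\partial \Delta _2$, lifting both disks to balls of $\mathcal{B}(\mathcal{E})$ in $\widetilde{H}$, and extending across the disk interiors by the standard fact that two properly embedded disks in a $3$-ball with ambient-isotopic boundaries are ambient isotopic rel boundary. Finiteness of the $T$-image then reduces to bounding the length $N(\Delta )$ of the cyclic sequence by a constant independent of $\Delta $ and $\left( \mathcal{D},\mathcal{D}^\ast \right) $, since each entry of $T(\Delta )$ already lies in the finite alphabet $\mathcal{IA}\cup \mathcal{CA}$ by Lemma~\ref{lem_arc_type}.

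The bound on $N(\Delta )$ is the expected main obstacle, and I would attack it by contradiction. Assume that $N(\Delta )$ can be made arbitrarily large. A pigeonhole argument over the finite set $\mathcal{CA}$ produces two connecting arcs $\beta _1,\beta _2$ on $\partial \Delta $ in the same parallelism class, and I would choose them innermost along $\partial \Delta $ so that no intervening connecting arc lies in that class. The parallelism supplies a rectangle $R\subset \Sigma $ cobounded by $\beta _1, \beta _2$ and two arcs in $\partial \overline{\mathcal{E}}\setminus \overline{\mathcal{E}^\ast }$. Combining $R$ with the subdisk $\Delta '$ of $\Delta $ cut off by the sub-arc of $\partial \Delta $ joining $\beta _1$ to $\beta _2$, and pushing slightly into $H$, I expect to obtain either a lens in $H$ cobounded by round subdisks of $\overline{\mathcal{D}}$ and $\overline{\mathcal{E}}$, contradicting the essential-intersection condition on $\mathcal{D}$ and $\mathcal{E}$ built into the definition of $\mathfrak{R}$, or a wave disk with respect to $\mathcal{D}$ in $H$ whose wave arc can be perturbed inside $R$ to avoid $\partial \overline{\mathcal{E}^\ast }$ and hence be disjoint from some $E^\ast \in \mathcal{E}^\ast $, contradicting the rectangle condition via Lemma~\ref{waves_intersect}. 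The technically delicate point is the book-keeping that ensures the possibly complicated intervening pattern of intersection arcs on $\Delta '$ does not sabotage the construction; this is precisely what the innermost choice of $\beta _1, \beta _2$ is designed to handle.
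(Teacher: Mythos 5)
Your overall route is the same as the paper's: reduce the parallelism class of a disk piece $\Delta $ to the cyclic pattern of $\left( {\mathcal E},{\mathcal E}^\ast \right) $-parallelism classes of the arcs in $\partial \Delta $ (using Lemma~\ref{lem_arc_type} and irreducibility of the lifted $3$-ball), and then bound the number of boundary arcs by showing that two $\left( {\mathcal E},{\mathcal E}^\ast \right) $-parallel connecting arcs $\beta _1,\beta _2$ on $\partial \Delta $ would produce a wave with respect to ${\mathcal D}$ disjoint from $\overline{{\mathcal E}^\ast }$, contradicting Lemma~\ref{waves_intersect}. The gap is in how you produce that wave. You look for the wave arc in the interior of the parallelism rectangle $R$ and assert it ``can be perturbed inside $R$ to avoid $\partial \overline{{\mathcal E}^\ast }$''. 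But to be a wave its interior must also avoid $\partial \overline{\mathcal D}$, and the interior of $R$ can be crossed by connecting arcs of \emph{other} disk pieces: since $\partial \overline{\mathcal D}$ is embedded, such arcs cannot meet $\beta _1\cup \beta _2$, so they enter and leave $R$ through the two sides lying on $\partial \overline{\mathcal E}$; an arc running from one of those sides to the other separates $\beta _1$ from $\beta _2$ inside $R$, and then \emph{no} arc from $\beta _1$ to $\beta _2$ in $R$ avoids $\partial \overline{\mathcal D}$. Your innermost choice is made only along $\partial \Delta $ and only within the given class, so it does not exclude this configuration, because the offending arcs belong to other components of $\partial \overline{\mathcal D}\setminus \overline{\mathcal E}$, possibly on other disks of ${\mathcal D}$.

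This is exactly the case the paper's argument is designed to handle: it works with a side $\omega \subset \partial \overline{\mathcal E}\setminus \overline{{\mathcal E}^\ast }$ of the rectangle (so disjointness from $\overline{{\mathcal E}^\ast }$ is automatic and needs no perturbation), and when $\omega ^\circ $ meets $\partial \overline{\mathcal D}$ it uses a separation/parity argument on the $2$-sphere $\partial Y$ of a ball $Y\in {\mathcal B}\left( {\mathcal E}\right) $, together with the disjointness of $\partial \Delta $ and $\partial \Delta '$, to show that the other disk piece $\Delta '$ has two $\left( {\mathcal E},{\mathcal E}^\ast \right) $-parallel connecting arcs strictly between $\beta _1$ and $\beta _2$; one then re-chooses the pair to be innermost among all such pairs over \emph{all} disk pieces, after which $\omega $ is a wave with respect to ${\mathcal D}$ disjoint from $\overline{{\mathcal E}^\ast }$ and Lemma~\ref{waves_intersect} applies. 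Without this recursion across disk pieces your contradiction does not go through. Two smaller points: even when no $\partial \overline{\mathcal D}$-arc crosses $R$, an arc in the interior of $R$ must still dodge the arcs of $\partial \overline{{\mathcal E}^\ast }\cap R$ (which enter through $\beta _1\cup \beta _2$) and must be checked to bound a wave disk (no bigon with $\partial \overline{\mathcal D}$), both of which are immediate for the paper's choice of $\omega $ but not for an interior arc; and the ``lens'' alternative you propose does not occur here, since the intersections involved are arcs rather than circles, so the only contradiction available is the wave one.
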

\begin{proof}
Note that a disk piece $\Delta $ with respect to ${\mathcal E}$ is a disk, and each lift of it is properly embedded in a $3$-ball in ${\mathcal B}\left( {\mathcal E}\right) $.
It follows from the irreducibility of the $3$-ball that the parallelism class of $\Delta $ is determined by those of the $1$-dimensional objects in $\partial \Delta $.
The present lemma, therefore, almost follows from Lemma~\ref{lem_arc_type}.
It remains to rule out the possibility that infinitely many non-parallel disk pieces are generated by composing arbitrarily many connecting arcs and intersection arcs in the same parallelism classes.
\begin{claim}
At most one connecting arc in $\partial \Delta $ belongs to one $\left( {\mathcal E},{\mathcal E}^\ast \right) $-parallelism class.
\end{claim}
\begin{proof}
Let $\left( {\mathcal D},{\mathcal D}^\ast \right) $ denote the pair in ${\mathfrak R}$ and $Y$ denote the $3$-ball in ${\mathcal B}\left( {\mathcal E}\right) $ such that $\Delta $ is a disk piece of ${\mathcal D}$ with respect to ${\mathcal E}$ in $\pi _H(Y)$.
Assume that distinct connecting arcs $\beta _1$ and $\beta _2$ in $\partial \Delta $ are $\left( {\mathcal E},{\mathcal E}^\ast \right) $-parallel.
There is a subarc $\omega $ of $\partial \overline{\mathcal E}\setminus \overline{{\mathcal E}^\ast }$ connecting $\beta _1$ and $\beta _2$.
Let $\omega ^\circ $ denote the interior of $\omega $.
If $\omega ^\circ $ intersects $\partial \Delta $ then there are more $\left( {\mathcal E},{\mathcal E}^\ast \right) $-parallel connecting arcs.
By rechoosing $\beta _1$ and $\beta _2$ to be innermost, we may suppose that $\omega ^\circ $ is disjoint from $\partial \Delta $.
Assume that $\omega ^\circ $ still intersects $\partial \overline{\mathcal D}$.
There is another disk piece $\Delta '$ of ${\mathcal D}$ with respect to ${\mathcal E}$ in $\pi _H(Y)$ intersecting $\omega ^\circ $.
Let $\widetilde{\Delta }$ denote the lift of $\Delta $ in $Y$, and $\widetilde{\omega }$ denote the lift of $\omega $ in $Y$ connecting the lifts of $\beta _1$ and $\beta _2$.
Note that $\widetilde{\omega }$ and a subarc of $\partial \widetilde{\Delta }$ form a simple, closed curve, which is separating in the sphere $\partial Y$, and that $\partial \Delta $ and $\partial \Delta '$ are disjoint.
It follows that $\omega ^\circ $ and $\partial \Delta '$ have even number of intersection points, and hence $\partial \Delta '$ has distinct two connecting arcs between $\beta _1$ and $\beta _2$.
Note that they are $\left( {\mathcal E},{\mathcal E}^\ast \right) $-parallel as well as $\beta _1$ and $\beta _2$.
By repeating this process, we can rechoose $\beta _1$ and $\beta _2$ to be innermost in such pairs of arcs.
Then $\omega ^\circ $ is disjoint from $\partial \overline{\mathcal D}$, and hence $\omega $ is a wave with respect to ${\mathcal D}$.
Recall that $\omega $ is disjoint from $\overline{{\mathcal E}^\ast }$.
This contradicts the rectangle condition by Lemma~\ref{waves_intersect}.
\end{proof}
\begin{claim}
At most two intersection arcs in $\partial \Delta $ belong to one $\left( {\mathcal E},{\mathcal E}^\ast \right) $-parallelism class.
\end{claim}
\begin{proof}
Assume that distinct three intersection arcs in $\partial \Delta $ are $\left( {\mathcal E},{\mathcal E}^\ast \right) $-parallel.
Let $\widetilde{\Delta }$ be a lift of $\Delta $ in $\widetilde{H}$.
The lifts in $\partial \widetilde{\Delta }$ of at least two of the three arcs are contained in the same disk in $\widetilde{\mathcal E}$.
Similarly to the proof of the previous claim, we obtain a contradiction.
\end{proof}
\noindent
These claims complete the proof of Lemma~\ref{lem_disk_piece_type}.
\end{proof}

Third, we consider the $3$-dimensional objects called thin parts and thick parts.
For a pair $\left( {\mathcal D},{\mathcal D}^\ast \right) $ in ${\mathfrak R}$, a {\it part} of $\widetilde{\mathcal D}$ with respect to $\widetilde{\mathcal E}$ is a component $\widetilde{P}$ of the intersection of a $3$-ball in ${\mathcal B}\left( {\mathcal E}\right) $ and a $3$-ball in ${\mathcal B}\left( {\mathcal D}\right) $.
We also call $\widetilde{P}$ a {\it part} of $B$ with respect to $\widetilde{\mathcal E}$ if $B$ is the $3$-ball in ${\mathcal B}\left( {\mathcal D}\right) $ containing $\widetilde{P}$.
The image $\pi _H\bigl( \widetilde{P}\bigr) $ is denoted by $P$, and called a {\it part} of ${\mathcal D}$ with respect to ${\mathcal E}$.
Note that $\widetilde{P}$ is a $3$-ball, but $P$ is possibly not.
The part $\widetilde{P}$ and the part $P$ are said to be {\it thin} if $P$ is the region of $\left( {\mathcal E},{\mathcal E}^\ast \right) $-parallelism between two disk pieces of ${\mathcal D}$ with respect to ${\mathcal E}$, or {\it thick} otherwise.
For pairs $\left( {\mathcal D}_1,{\mathcal D}_1^\ast \right) $ and $\left( {\mathcal D}_2,{\mathcal D}_2^\ast \right) $ in ${\mathfrak R}$, parts $P_1$ and $P_2$ of ${\mathcal D}_1$ and ${\mathcal D}_2$, respectively, with respect to ${\mathcal E}$ are said to be $\left( {\mathcal E},{\mathcal E}^\ast \right) $-{\it parallel} if $\left( P_1,P_1\cap \overline{\mathcal E},P_1\cap \Sigma ,P_1\cap \overline{{\mathcal E}^\ast }\right) $ and $\left( P_2,P_2\cap \overline{\mathcal E},P_2\cap \Sigma ,P_2\cap \overline{{\mathcal E}^\ast }\right) $ are isotopic in $\left( H,\overline{\mathcal E},\Sigma ,H\cap \overline{{\mathcal E}^\ast }\right) $.

We have the following structure lemma of thin parts.
\begin{lemma}\label{lem_thin_structure}
Let $\left( {\mathcal D},{\mathcal D}^\ast \right) $ be a pair in ${\mathfrak R}$, and $\widetilde{P}$ be a thin part of $\widetilde{\mathcal D}$ with respect to $\widetilde{\mathcal E}$.
Then we have the following:
\begin{itemize}
\item[(1)] Each disk piece of $\widetilde{\mathcal E}$ with respect to $\widetilde{\mathcal D}$ adjacent to $\widetilde{P}$ is a rectangle cobounded by two intersection arcs and two connecting arcs of $\widetilde{\mathcal E}$ with respect to $\widetilde{\mathcal D}$;
\item[(2)] $\widetilde{P}$ intersects exactly two disks in $\widetilde{\mathcal D}$.
\end{itemize}
\end{lemma}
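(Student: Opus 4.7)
My plan is to exploit the product structure $\widetilde{P}\cong \widetilde{\Delta}\times [0,1]$ that comes with the definition of thinness. Since $P$ is a parallelism region homeomorphic to $\Delta \times [0,1]$ and hence contractible, the restriction $\pi_H|_{\widetilde{P}}$ is a homeomorphism onto $P$, so $\widetilde{P}$ inherits the product structure; write $\widetilde{\Delta}_0=\widetilde{\Delta}\times \{0\}$ and $\widetilde{\Delta}_1=\widetilde{\Delta}\times \{1\}$ for the two $\widetilde{\mathcal{D}}$-faces, each contained in a single disk $\widetilde{D}_0,\widetilde{D}_1\in \widetilde{\mathcal{D}}$. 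The circle $\partial \widetilde{\Delta}$ decomposes as an alternating cycle of intersection arcs, each lying on some disk of $\widetilde{\mathcal{E}}$, and connecting arcs lying on $\partial \widetilde{H}$.

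For item (1), I would fix an intersection arc $\alpha \subset \partial \widetilde{\Delta}$ with $\alpha \subset \widetilde{E}$ for some $\widetilde{E}\in \widetilde{\mathcal{E}}$. Because the $(\mathcal{E},\mathcal{E}^\ast)$-parallelism preserves $\overline{\mathcal{E}}$ setwise, the rectangle $\alpha \times [0,1]$ sits entirely on $\widetilde{E}$. Its four edges are the intersection arcs $\alpha \times \{0\}\subset \widetilde{D}_0\cap \widetilde{E}$ and $\alpha \times \{1\}\subset \widetilde{D}_1\cap \widetilde{E}$ of $\widetilde{\mathcal{E}}$ with respect to $\widetilde{\mathcal{D}}$, together with the connecting arcs $\partial \alpha \times [0,1]\subset \partial \widetilde{E}\cap \partial \widetilde{H}$. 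To conclude that this rectangle coincides with a whole disk piece of $\widetilde{\mathcal{E}}$ with respect to $\widetilde{\mathcal{D}}$, I would observe that its relative interior lies on $\partial \widetilde{P}$ while being disjoint from $\overline{\widetilde{\mathcal{D}}}$: the interior of $\widetilde{P}$ sits inside $\widetilde{B}\in \mathcal{B}(\mathcal{D})$, and the only $\widetilde{\mathcal{D}}$-faces of $\partial \widetilde{P}$ are $\widetilde{\Delta}_0\cup \widetilde{\Delta}_1$, so no further arc of $\widetilde{E}\cap \overline{\widetilde{\mathcal{D}}}$ can pass through the rectangle.

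For item (2), the product description gives $\overline{\widetilde{\mathcal{D}}}\cap \widetilde{P}=\widetilde{\Delta}_0\cup \widetilde{\Delta}_1$, so $\widetilde{P}$ meets at most two disks, and it remains to prove $\widetilde{D}_0\neq \widetilde{D}_1$. Suppose for contradiction $\widetilde{D}_0=\widetilde{D}_1=:\widetilde{D}$. Then, for any rectangle from (1), each connecting-arc side $\{p\}\times [0,1]$ is an arc on $\partial \widetilde{H}$ whose two endpoints both lie on $\partial \widetilde{D}$; projecting to $\partial H$ gives an arc $\omega$ on $\partial E$ with both endpoints on $\partial D$ (where $E=\pi_H(\widetilde{E})$ and $D=\pi_H(\widetilde{D})$) whose interior is disjoint from $\partial \overline{\mathcal{D}}$. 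The essential-intersection condition on $\partial \mathcal{D}$ and $\partial \mathcal{E}$ in $\Sigma$ forbids $\omega$ from cobounding a bigon with any subarc of $\partial D$. Using the simple-connectedness of $\widetilde{H}$ to cap $\omega$ by an arc in $\widetilde{D}$ and to bound the resulting loop by a disk in $\widetilde{H}$, I would obtain a wave disk $\Omega$ with respect to $\mathcal{D}$ with wave $\omega$. Performing a parallel construction on the $H^\ast$ side, using that the $(\mathcal{E},\mathcal{E}^\ast)$-parallelism also preserves $\overline{\mathcal{E}^\ast}$ setwise to place the constructed wave disjoint from an essential disk or wave disk with respect to $\mathcal{D}^\ast$, I would contradict Lemma~\ref{waves_intersect}.

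The main obstacle will be in item (2): precisely producing the wave disk $\Omega$ and a wave disk or essential disk in $H^\ast$ disjoint from $\Omega$ so as to invoke Lemma~\ref{waves_intersect}. An alternative route is to show that the hypothesized configuration $\widetilde{D}_0=\widetilde{D}_1$ already violates the no-bigon-in-$\Sigma$ or no-lens-in-$H$ conditions of essential intersection between $\mathcal{D}$ and $\mathcal{E}$ directly, bypassing the wave construction.
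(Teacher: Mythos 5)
Your proposal is correct and follows essentially the same route as the paper: assertion (1) is read off from the product (parallelism) structure of a thin part, and for (2) the coincidence $\widetilde{D}_0=\widetilde{D}_1$ turns a connecting arc of $\widetilde{\mathcal E}$ with respect to $\widetilde{\mathcal D}$ adjacent to $\widetilde{P}$ into a wave with respect to ${\mathcal D}$ which, by thinness, misses $\overline{{\mathcal E}^\ast }$, contradicting the rectangle condition via Lemma~\ref{waves_intersect}. The obstacle you flag is handled just as you anticipate (and as the paper leaves implicit): cap the wave inside the $3$-ball of ${\mathcal B}({\mathcal D})$ containing $\widetilde{P}$ to get an embedded wave disk meeting $\Sigma $ only in the wave, and take $O^\ast $ to be a disk of ${\mathcal E}^\ast $ if it is disjoint from $\overline{{\mathcal D}^\ast }$, or a wave disk with respect to ${\mathcal D}^\ast $ contained in it otherwise, so that disjointness from the wave follows from the wave avoiding $\partial \overline{{\mathcal E}^\ast }$.
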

\begin{proof}
The assertion (1) can be seen from the definitions of thinness and $\left( {\mathcal E},{\mathcal E}^\ast \right) $-parallelism for disk pieces.
Also by the thinness, $\widetilde{P}$ is adjacent to exactly two disk pieces of $\widetilde{\mathcal D}$ with respect to $\widetilde{\mathcal E}$.
Assume that they are contained in one disk in $\widetilde{\mathcal D}$.
Then a connecting arc of $\widetilde{\mathcal E}$ with respect to $\widetilde{\mathcal D}$ adjacent to $\widetilde{P}$ maps by $\pi _H$ to a wave with respect to ${\mathcal D}$.
Since the part $\widetilde{P}$ is thin, the wave is disjoint from $\overline{{\mathcal E}^\ast }$.
This contradicts the rectangle condition by Lemma~\ref{waves_intersect}, to conclude the assertion (2).
\end{proof}

We have the following finiteness lemma of thick parts.
\begin{lemma}\label{lem_thick_finiteness}
The set
\begin{equation*}
\bigl\{ \, \text{thick parts of ${\mathcal D}$ with respect to ${\mathcal E}$}\bigm| \left( {\mathcal D},{\mathcal D}^\ast \right) \in {\mathfrak R}\bigr\} \big/ \text{$\left( {\mathcal E},{\mathcal E}^\ast \right) $-parallel}
\end{equation*}
is finite, and there exists the maximum in the set
\begin{equation*}
\bigl\{ \, \text{the number of thick parts of ${\mathcal D}$ with respect to ${\mathcal E}$}\bigm| \left( {\mathcal D},{\mathcal D}^\ast \right) \in {\mathfrak R}\bigr\} .
\end{equation*}
\end{lemma}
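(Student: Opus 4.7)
My plan is to analyze the linear chain structure of parts inside each of the fixed balls $Y_1,\dots,Y_{2g-2}\in\mathcal{B}(\mathcal{E})$ introduced in the proof of Lemma~\ref{lem_arc_type}, whose images under $\pi_H$ together cover $H$. Fix a pair $(\mathcal{D},\mathcal{D}^\ast)\in\mathfrak{R}$ and some $Y_p$. The disk pieces of $\widetilde{\mathcal{D}}$ with respect to $\widetilde{\mathcal{E}}$ contained in $Y_p$ are pairwise disjoint properly embedded disks in the $3$-ball $Y_p$, and any two disjoint properly embedded disks in a $3$-ball cobound a parallelism region. Hence these disk pieces are linearly ordered by nesting and partition $Y_p$ into a linear chain of parts $P_0, P_1,\dots,P_n$. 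Each disk piece carries an $(\mathcal{E},\mathcal{E}^\ast)$-parallelism class label, and by Lemma~\ref{lem_disk_piece_type} the set of labels that ever occur across pairs in $\mathfrak{R}$ is finite, of cardinality at most some uniform constant $K$.

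The essential combinatorial claim is that, in $Y_p$, disk pieces of a common parallelism class occupy a contiguous block in the nesting order. To establish it, suppose $\Delta$ and $\Delta'$ are two class-$c$ parallel disk pieces in $Y_p$, and let $A\subset\partial Y_p$ be the annulus between $\partial\Delta$ and $\partial\Delta'$. Since $\partial\mathcal{D}$ and $\partial\mathcal{E}^\ast$ intersect essentially in $\Sigma$, no arc of $\partial\overline{\mathcal{E}^\ast}\cap A$ can have both endpoints on one boundary circle of $A$, for such a back arc would cobound a bigon in $\Sigma$ with a subarc of $\partial\Delta$ or $\partial\Delta'$. Therefore every $\partial\overline{\mathcal{E}^\ast}$-arc in $A$ runs transversely from one boundary circle to the other, and any core-parallel simple closed curve in $A$ meets $\partial\overline{\mathcal{E}^\ast}$ in the same cyclic pattern as $\partial\Delta$. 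In particular, the boundary of any disk piece $\Delta''$ nested between $\Delta$ and $\Delta'$ is such a core-parallel curve, so $\Delta''$ is $(\mathcal{E},\mathcal{E}^\ast)$-parallel to $\Delta$, giving contiguity.

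With contiguity in hand, Lemma~\ref{lem_thin_structure} yields: an interior part $P_i$ is thin precisely when its two adjacent disk pieces share a class label. Hence the thick parts of $Y_p$ consist of the two endpoints $P_0, P_n$ together with the class-transition parts, numbering at most (number of distinct classes in $Y_p$)$-1\leq K-1$. This gives at most $K+1$ thick parts per $Y_p$, and summing over $p=1,\dots,2g-2$ yields a uniform bound of $(2g-2)(K+1)$ on the total number of thick parts of $\mathcal{D}$ with respect to $\mathcal{E}$, establishing the existence of the maximum. For the finiteness of $(\mathcal{E},\mathcal{E}^\ast)$-parallelism classes of thick parts, each thick part is a $3$-ball bounded by at most two disk pieces (drawn from the finitely many classes of Lemma~\ref{lem_disk_piece_type}) together with determined complementary regions on $\partial Y_p$. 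This yields finitely many combinatorial configurations, each determining the parallelism class.

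The main obstacle is the contiguity claim: rigorously ruling out arcs of $\partial\overline{\mathcal{E}^\ast}$ on $A$ with both endpoints on a single boundary circle. One must carefully verify that any such back arc, together with a suitable subarc of $\partial\Delta$ or $\partial\Delta'$, genuinely produces a bigon in $\Sigma$ contradicting the essential intersection condition built into $\mathfrak{R}$, and ensure the argument treats parts of $A$ lying on the disks of $\widetilde{\mathcal{E}}_{Y_p}$ as well as those on $\partial\widetilde{H}$.
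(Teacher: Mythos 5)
Your argument rests on a structural claim that is false: that the disk pieces of $\widetilde{\mathcal D}$ with respect to $\widetilde{\mathcal E}$ lying in a ball $Y_p$ are ``linearly ordered by nesting'' and cut $Y_p$ into a linear chain of parts $P_0,\dots ,P_n$. It is true that any two disjoint properly embedded disks in a $3$-ball cobound a product region, but this pairwise statement does not give a global stack: for three or more disk pieces whose boundary circles on $\partial Y_p$ are mutually non-nested, the complementary components form a branching tree (e.g.\ one central part adjacent to all the pieces, plus several ``cap'' parts), not a chain. Everything downstream breaks with this. First, a part --- in particular a thick part --- can be adjacent to arbitrarily many disk pieces, so your final assertion that ``each thick part is a $3$-ball bounded by at most two disk pieces'' is wrong, and with it your proof of the first assertion (finiteness of $\left( {\mathcal E},{\mathcal E}^\ast \right) $-parallelism classes of thick parts). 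Second, the transition-counting bound ``at most $K+1$ thick parts per $Y_p$'' does not follow: for instance, many same-class disk pieces arranged as non-nested thin pairs scattered over $\partial Y_p$ would produce one thick central part plus one thick cap part per pair, with no class transitions at all; your chain-based count is blind to such configurations, and you give no argument excluding them. (Your contiguity step is also shaky even in the nested case: the boundary of a disk piece contains intersection arcs lying on $\overline{\mathcal E}$, not curves of the diagram, so a back arc of $\partial \overline{{\mathcal E}^\ast }$ in the annulus $A$ does not directly yield a bigon forbidden by essential intersection of $\partial {\mathcal D}$ and $\partial {\mathcal E}^\ast $ --- a gap you flagged yourself.)

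For comparison, the paper's proof does not need any chain structure. It observes that a thick part is cut off from a ball of ${\mathcal B}\left( {\mathcal E}\right) $ by \emph{some} (possibly many) disk pieces; that a thick part cannot have two $\left( {\mathcal E},{\mathcal E}^\ast \right) $-parallel disk pieces on its boundary (it would then be the region of parallelism, hence thin); and that a given collection of disk-piece parallelism classes determines at most two parallelism classes of thick parts (the ``burger'' ambiguity). Finiteness of classes of thick parts then follows from Lemma~\ref{lem_disk_piece_type}. The numerical bound is obtained by a separate observation you never make: two distinct thick parts of the same ${\mathcal D}$ are never $\left( {\mathcal E},{\mathcal E}^\ast \right) $-parallel, so the number of thick parts is bounded by the (finite) number of their parallelism classes. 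To repair your approach you would have to replace the chain picture by the tree picture and reprove both of these points; as written, the proposal has a genuine gap.
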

\begin{proof}
Note that a thick part $\widetilde{P}$ with respect to $\widetilde{\mathcal E}$ is cut off from a $3$-ball in ${\mathcal B}\left( {\mathcal E}\right) $ by some disk pieces.
The parallelism class of $\pi _H\bigl( \widetilde{P}\bigr) $ is, therefore, almost determined by those of the images of the disk pieces.
To examine more carefully, note that only one stratum of parallel disk pieces possibly cuts off a $3$-ball in ${\mathcal B}\left( {\mathcal E}\right) $ into two thick parts, as illustrated in Figure~\ref{fig_burger}.
Still at most two $\left( {\mathcal E},{\mathcal E}^\ast \right) $-parallelism classes of thick parts correspond to one collection of $\left( {\mathcal E},{\mathcal E}^\ast \right) $-parallelism classes of disk pieces.
Note also that if a thick part has two $\left( {\mathcal E},{\mathcal E}^\ast \right) $-parallel disk pieces, then that part must be the region of parallelism, to contradict the thickness.
Taking these into account, the first assertion follows from Lemma~\ref{lem_disk_piece_type}.
Note also that for a pair $\left( {\mathcal D},{\mathcal D}^\ast \right) $ in ${\mathfrak R}$, two distinct thick parts of ${\mathcal D}$ with respect to ${\mathcal E}$ cannot be $\left( {\mathcal E},{\mathcal E}^\ast \right) $-parallel.
The number of thick parts is, therefore, bounded from above by the finite cardinality in the first assertion, to obtain the second assertion.
\end{proof}

\begin{figure}[ht]
\begin{picture}(150,130)(0,0)
\put(0,0){\includegraphics{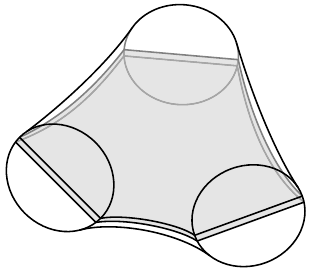}}
\put(27,61){\rotatebox{-45}{thick}}
\put(8,42){\rotatebox{-45}{thick}}
\put(23,1){thin}
\put(33,10){\vector(1,3){7}}
\end{picture}
\caption{Two thick parts separated by a stack of thin parts.}
\label{fig_burger}
\end{figure}

We focus on thin and thick parts lying peripherally in the handlebody.
Let $\left( {\mathcal D},{\mathcal D}^\ast \right) $ be a pair in ${\mathfrak R}$, and $B$ be a $3$-ball in ${\mathcal B}\left( {\mathcal D}\right) $.
Let $\widetilde{P}$ be a thin part of $B$ with respect to $\widetilde{\mathcal E}$, and $\widetilde{\nabla }$ be a disk piece of $\widetilde{\mathcal E}$ with respect to $\widetilde{\mathcal D}$ adjacent to $\widetilde{P}$.
Note that $\widetilde{\nabla }$ is a properly embedded disk in the $3$-ball $B$.
Let ${\mathbf P}$ denote the closure of the complementary component of $\widetilde{\nabla }$ in $B$ such that ${\mathbf P}$ contains $\widetilde{P}$.
Suppose that $B\setminus {\mathbf P}$ intersects all the disks in $\widetilde{\mathcal D}_B$.
Then $\widetilde{P}$ is said to be {\it peripheral}, and ${\mathbf P}$ is called the {\it peripheral component} by $\widetilde{P}$, or a {\it peripheral component} of $B$ with respect to $\widetilde{\mathcal E}$.
We imported this terminology from \cite{Lustig-Moriah}, but modified the definition.
Still similarly to \cite[Lemma~4.5]{Lustig-Moriah}, we have the following structure lemma.
\begin{lemma}\label{lem_peripheral_structure}
Let $\left( {\mathcal D},{\mathcal D}^\ast \right) $ be a pair in ${\mathfrak R}$, and $B$ be a $3$-ball in ${\mathcal B}\left( {\mathcal D}\right) $, and ${\mathbf P}$ be a peripheral component of $B$ with respect to $\widetilde{\mathcal E}$.
Then we have the following:
\begin{itemize}
\item[(1)] Any thin part in ${\mathbf P}$ is peripheral, and the peripheral component by it is contained in ${\mathbf P}$;
\item[(2)] ${\mathbf P}$ intersects exactly two disks $\widetilde{D}_1$ and $\widetilde{D}_2$ in $\widetilde{\mathcal D}_B$;
\item[(3)] ${\mathbf P}\cap \widetilde{D}_1$ and ${\mathbf P}\cap \widetilde{D}_2$ are bigons composed of the same number of disk pieces of $\widetilde{\mathcal D}$ with respect to $\widetilde{\mathcal E}$.
\end{itemize}
\end{lemma}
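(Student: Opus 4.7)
The plan is to establish (2) first, then prove a structural claim on the $\widetilde{\mathcal{E}}$-disk pieces inside $\mathbf{P}$, and from it deduce (3) and (1) in turn. For (2): by Lemma~\ref{lem_thin_structure}, $\widetilde{\nabla}$ is a rectangle whose two intersection arcs lie on the two disks $\widetilde{D}_1, \widetilde{D}_2 \in \widetilde{\mathcal{D}}_B$ met by $\widetilde{P}$. As a simple closed curve on the sphere $\partial B$, $\partial \widetilde{\nabla}$ crosses $\partial \widetilde{D}_1, \partial \widetilde{D}_2$ twice each and is disjoint from every other $\partial \widetilde{D}_i$; each such $\widetilde{D}_i$ therefore lies entirely in one of the hemispheres of $\partial B$ cut by $\partial \widetilde{\nabla}$, and the peripherality hypothesis places it in $B \setminus \mathbf{P}$. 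Hence $\mathbf{P}$ meets $\widetilde{\mathcal{D}}_B$ only in $\widetilde{D}_1, \widetilde{D}_2$, and $\mathbf{P} \cap \widetilde{D}_i$ is the bigon cut off from $\widetilde{D}_i$ by the single arc $\widetilde{\nabla} \cap \widetilde{D}_i$.

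The main step is the claim that every disk piece $\widetilde{\Omega}$ of $\widetilde{\mathcal{E}}$ with respect to $\widetilde{\mathcal{D}}$ contained in $\mathbf{P}$ has its intersection arcs alternating between $\widetilde{D}_1$ and $\widetilde{D}_2$ around $\partial \widetilde{\Omega}$. By (2), every intersection arc of $\widetilde{\Omega}$ lies on $\widetilde{D}_1$ or $\widetilde{D}_2$. Assume some connecting arc $\beta$ of $\partial \widetilde{\Omega}$ has both endpoints on a single disk $\widetilde{D}_i$, and choose $\beta$ innermost with this property. An outermost-disk construction on $\widetilde{D}_i$, analogous to that used in Lemma~\ref{lem_arc_type}, produces a wave $\omega$ with respect to $\mathcal{D}$; after a slight shift off a nearby $\partial \overline{\mathcal{E}^\ast}$-curve, $\omega$ becomes disjoint from $\partial \overline{\mathcal{E}^\ast}$, and the corresponding wave disk is therefore disjoint from every $E^\ast \in \mathcal{E}^\ast$ inside the $3$-manifold. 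Since each such $E^\ast$ is either disjoint from $\overline{\mathcal{D}^\ast}$ or contains a wave disk with respect to $\mathcal{D}^\ast$, this contradicts Lemma~\ref{waves_intersect}. Hence every connecting arc of $\partial \widetilde{\Omega}$ joins $\widetilde{D}_1$ to $\widetilde{D}_2$ and intersection arcs alternate, yielding equal counts on the two disks.

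From the claim, (3) follows: each $\widetilde{\mathcal{E}}$-disk piece of $\mathbf{P}$ contributes equally many intersection arcs to $\mathbf{P} \cap \widetilde{D}_1$ and $\mathbf{P} \cap \widetilde{D}_2$, so the two bigons receive the same total number of interior arcs and hence decompose into the same number of $\widetilde{\mathcal{D}}$-disk pieces. For (1), take any thin $\widetilde{Q} \subset \mathbf{P}$; by (2) and Lemma~\ref{lem_thin_structure}, its $\widetilde{\mathcal{D}}$-faces lie on $\widetilde{D}_1, \widetilde{D}_2$ and its $\widetilde{\mathcal{E}}$-faces are rectangles connecting them. Pick $\widetilde{\nabla}'$ to be an $\widetilde{\mathcal{E}}$-face of $\widetilde{Q}$ that separates $\widetilde{Q}$ from $\widetilde{\nabla}$ inside $B$ (such exists because the path from $\widetilde{Q}$ toward $\widetilde{\nabla}$ in $\mathbf{P}$ first crosses an $\widetilde{\mathcal{E}}$-face of $\widetilde{Q}$), or set $\widetilde{\nabla}' = \widetilde{\nabla}$ if $\widetilde{Q}$ is already adjacent to $\widetilde{\nabla}$. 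The component $\mathbf{Q}$ of $B \setminus \widetilde{\nabla}'$ containing $\widetilde{Q}$ is then nested inside $\mathbf{P}$, and $B \setminus \mathbf{Q} \supset B \setminus \mathbf{P}$ meets every disk of $\widetilde{\mathcal{D}}_B$, proving $\widetilde{Q}$ peripheral with peripheral component $\mathbf{Q} \subset \mathbf{P}$.

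The main obstacle is the wave-disk construction in the structural claim: assembling an embedded wave disk for $\omega$ from subdisks of $\widetilde{\Omega}$ and $\widetilde{D}_i$, and verifying that the shift off $\partial \overline{\mathcal{E}^\ast}$ preserves the wave property and produces the disjointness needed to invoke Lemma~\ref{waves_intersect}, mirroring the delicate argument in Lemma~\ref{lem_arc_type}. The rest of the proof is combinatorial bookkeeping on the layered structure of $\mathbf{P}$.
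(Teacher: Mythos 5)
Your treatment of (2) and of the nesting argument for (1) is essentially the paper's, and your counting scheme for (3) (equal numbers of intersection arcs on the two bigons, hence equal numbers of disk pieces) would work. The genuine gap is in the justification of your central ``alternation'' claim. You propose to rule out a connecting arc of $\widetilde{\mathcal E}$ returning to the same disk $\widetilde{D}_i$ by producing a wave $\omega $ with respect to ${\mathcal D}$ and then making it disjoint from $\partial \overline{{\mathcal E}^\ast }$ ``after a slight shift off a nearby $\partial \overline{{\mathcal E}^\ast }$-curve'', so as to contradict Lemma~\ref{waves_intersect}. That shift is not available: a wave with respect to ${\mathcal D}$ in general meets the curves of $\partial {\mathcal E}^\ast $ transversely in essential intersection points, and no small isotopy removes them. (If every wave could be shifted off $\overline{{\mathcal E}^\ast }$, then Lemma~\ref{waves_intersect} would forbid the existence of any wave of ${\mathcal E}$ with respect to ${\mathcal D}$ whenever the rectangle condition holds, which is absurd whenever ${\mathcal D}\neq {\mathcal E}$.) In the paper, whenever a wave is played against Lemma~\ref{waves_intersect}, the disjointness from $\overline{{\mathcal E}^\ast }$ comes from a specific structural reason -- thinness of a part, an arc already lying in $\partial \overline{\mathcal E}\setminus \overline{{\mathcal E}^\ast }$, or, in Lemma~\ref{lem_arc_type}, a wave running along $\partial E^\ast $ itself so that pushing off that single curve suffices -- and no such reason is present in your configuration. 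You yourself flag this as the unresolved ``main obstacle'', and as stated the step fails.

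The fix is both simpler and is what the paper does: no wave argument is needed at this point. By (2), the surface ${\mathbf P}\cap \partial \widetilde{H}$ is a rectangle with two opposite sides on $\partial \widetilde{D}_1$ and $\partial \widetilde{D}_2$ and the other two sides equal to the connecting arcs $\widetilde{\xi }_1,\widetilde{\xi }_2$ of $\widetilde{\nabla }$. A connecting arc of $\widetilde{\mathcal E}$ inside this rectangle with both endpoints on the same side $\partial \widetilde{D}_i$ would, after passing to an innermost one, cobound a bigon with a subarc of $\partial \widetilde{D}_i$; since the no-bigon property of $\partial {\mathcal E}$ and $\partial {\mathcal D}$ persists in the universal cover, this contradicts the hypothesis (built into ${\mathfrak R}$) that ${\mathcal E}$ and ${\mathcal D}$ intersect essentially. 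Hence all connecting arcs in the rectangle are spanning arcs from $\partial \widetilde{D}_1$ to $\partial \widetilde{D}_2$, which is exactly your alternation claim, and the rest of your argument then goes through. With that substitution your proof is correct and is essentially the paper's, viewed from the ${\mathcal E}$-disk pieces rather than directly from the rectangle ${\mathbf P}\cap \partial \widetilde{H}$.
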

\noindent
As the assertion (1), there is possibly nesting among peripheral components.
We call a peripheral component {\it maximal} if it is not contained in another.
\begin{proof}[Proof of Lemma~\ref{lem_peripheral_structure}]
The assertion (1) can be seen from the construction of ${\mathbf P}$.
Let $\widetilde{P}$ and $\widetilde{\nabla }$ as above.
By Lemma~\ref{lem_thin_structure}~(2), the thin part $\widetilde{P}$ intersects exactly two disks in $\widetilde{\mathcal D}_B$.
Since ${\mathbf P}$ contains $\widetilde{P}$, the peripheral component ${\mathbf P}$ intersects those two disks.
Assume that ${\mathbf P}$ also intersects another disk $\widetilde{D}_3$ in $\widetilde{\mathcal D}_B$ than the two.
Since $B\setminus {\mathbf P}$ intersects all the disks, $\widetilde{D}_3$ has non-empty intersections with both ${\mathbf P}$ and $B\setminus {\mathbf P}$.
By the construction, the border $\widetilde{\nabla }$, and hence $\widetilde{P}$, intersects $\widetilde{D}_3$.
This is a contradiction, and thus the two disks are $\widetilde{D}_1$ and $\widetilde{D}_2$ in the assertion (2) of the present lemma.
By Lemma~\ref{lem_thin_structure} (1), the disk $\widetilde{\nabla }$ is a rectangle cobounded by two intersection arcs $\widetilde{\alpha }_1$ and $\widetilde{\alpha }_2$ and two connecting arcs $\widetilde{\xi }_1$ and $\widetilde{\xi }_2$ of $\widetilde{\mathcal E}$ with respect to $\widetilde{\mathcal D}$.
The surface ${\mathbf P}\cap \partial B$ is cut off from the $2$-sphere $\partial B$ by the circle $\partial \widetilde{\nabla }$, and hence is a disk.
The surfaces ${\mathbf P}\cap \widetilde{D}_1$ and ${\mathbf P}\cap \widetilde{D}_2$ are cut off from $\widetilde{D}_1$ and $\widetilde{D}_2$, respectively, by $\widetilde{\alpha }_1$ and $\widetilde{\alpha }_2$, and hence are bigons.
Since ${\mathbf P}\cap \partial B$ is disjoint from other disks in $\widetilde{\mathcal D}_B$, the surface ${\mathbf P}\cap \partial \widetilde{H}$ is the closure of the complement of the two bigons in the disk ${\mathbf P}\cap \partial B$, and hence is a rectangle cobounded by $\widetilde{\xi }_1$ and $\widetilde{\xi }_2$ and two subarcs of $\partial \widetilde{D}_1$ and $\partial \widetilde{D}_2$.
Since ${\mathcal E}$ and ${\mathcal D}$ intersect essentially in $H$, the connecting arcs in the rectangle ${\mathbf P}\cap \partial \widetilde{H}$ are parallel, and hence have the same number of endpoints in $\partial \widetilde{D}_1$ and $\partial \widetilde{D}_2$.
They are the endpoints of the same number of intersection arcs in ${\mathbf P}\cap \widetilde{D}_1$ and ${\mathbf P}\cap \widetilde{D}_2$.
They cut off ${\mathbf P}\cap \widetilde{D}_1$ and ${\mathbf P}\cap \widetilde{D}_2$ into the same number of disk pieces, as asserted in (3).
\end{proof}

Based on the above observation, we gather some of the $2$-dimensional objects into what we call peripheral subdisks.
Let $\left( {\mathcal D},{\mathcal D}^\ast \right) $ be a pair in ${\mathfrak R}$, and $\widetilde{D}$ be a disk in $\widetilde{\mathcal D}$.
Let $B_-$ and $B_+$ denote the $3$-balls in ${\mathcal B}\left( {\mathcal D}\right) $ such that $B_-\cap B_+=\widetilde{D}$.
Let ${\mathbf P}_-$ and ${\mathbf P}_+$ be maximal peripheral components of $B_-$ and $B_+$, respectively, with respect to $\widetilde{\mathcal E}$ intersecting $\widetilde{D}$.
Figure~\ref{fig_peripherals} shows their possible relative positions.
We consider the left case, where one of the bigons ${\mathbf P}_-\cap \widetilde{D}$ and ${\mathbf P}_+\cap \widetilde{D}$ contains the other, that is, the smaller bigon is the intersection ${\mathbf P}_-\cap {\mathbf P}_+$.
The image $\pi _H\left( {\mathbf P}_-\cap {\mathbf P}_+\right) $ is called a {\it peripheral subdisk} of ${\mathcal D}$ with respect to ${\mathcal E}$.

\begin{figure}[ht]
\begin{picture}(340,120)(0,0)
\put(0,0){\includegraphics{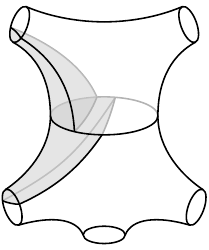}}
\put(24,43){${\mathbf P}_-$}
\put(32,74){${\mathbf P}_+$}
\put(55,25){$B_-$}
\put(55,90){$B_+$}
\put(59,60){$\widetilde{D}$}
\put(120,0){\includegraphics{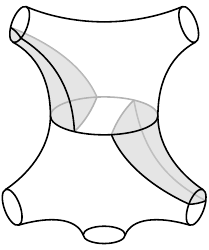}}
\put(240,0){\includegraphics{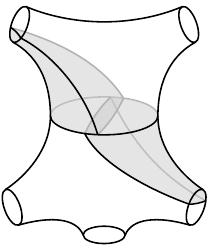}}
\end{picture}
\caption{Relative positions of peripheral components of $B_-$ and $B_+$.}
\label{fig_peripherals}
\end{figure}

We divide the argument into outside and inside of the peripheral subdisks in the next two paragraphs, respectively, to show the relevant uniform finiteness.
By uniform finiteness, we mean the existence of the maximum in the set
\begin{equation*}
\bigl\{ \, \text{the number of disk pieces of ${\mathcal D}$ with respect to ${\mathcal E}$}\bigm| \left( {\mathcal D},{\mathcal D}^\ast \right) \in \mathfrak{DR}\bigr\} .
\end{equation*}
This and Lemma~\ref{lem_disk_piece_type} imply that we have only finitely many possible constructions for a disk system ${\mathcal D}$ up to isotopy in $H$ so that ${\mathcal D}$ takes part in a pair in $\mathfrak{DR}$.
Whole symmetric arguments hold for disk systems of the other handlebody $H^\ast $, to complete the proof of Theorem~\ref{finiteness}.

The double rectangle condition guarantees the uniform finiteness outside of the peripheral subdisks.
To see this we consider thickness of disk pieces as follows.
A disk piece of ${\mathcal D}$ with respect to ${\mathcal E}$ is said to be {\it thick} if it is adjacent to a thick part of ${\mathcal D}$ with respect to ${\mathcal E}$.
We also define thickness for disk pieces of $\widetilde{\mathcal D}$ with respect to $\widetilde{\mathcal E}$ similarly.
By Lemma~\ref{lem_thick_finiteness}, there exists the maximum, denoted by $T$, in the set
\begin{equation*}
\bigl\{ \, \text{the number of thick disk pieces of ${\mathcal D}$ with respect to ${\mathcal E}$}\bigm| \left( {\mathcal D},{\mathcal D}^\ast \right) \in \mathfrak{DR}\bigr\} .
\end{equation*}
The following lemma implies that $T$ is also a uniform upper bound for the number of disk pieces outside of the peripheral subdisks.
\begin{lemma}\label{lem_central}
For a pair $\left( {\mathcal D},{\mathcal D}^\ast \right) $ in $\mathfrak{DR}$, any disk piece of ${\mathcal D}$ with respect to ${\mathcal E}$ is either thick or contained in a peripheral subdisk.
\end{lemma}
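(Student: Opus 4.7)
The plan is to argue by contradiction. Suppose $\Delta$ is a disk piece of $\mathcal{D}$ with respect to $\mathcal{E}$ that is neither thick nor contained in a peripheral subdisk. Take a lift $\widetilde{\Delta}$ lying in a disk $\widetilde{D} \in \widetilde{\mathcal{D}}$, and let $B_-, B_+ \in \mathcal{B}(\mathcal{D})$ denote the two $3$-balls with $B_- \cap B_+ = \widetilde{D}$. Since $\Delta$ is not thick, the parts $\widetilde{P}_-$ and $\widetilde{P}_+$ of $\widetilde{\mathcal{D}}$ with respect to $\widetilde{\mathcal{E}}$ flanking $\widetilde{\Delta}$ in $B_-$ and $B_+$ are both thin. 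By Lemma~\ref{lem_thin_structure}, each $\widetilde{P}_\pm$ is the region of parallelism between $\widetilde{\Delta}$ and a disk piece $\widetilde{\Delta}'_\pm$ lying in another disk $\widetilde{D}'_\pm \in \widetilde{\mathcal{D}}_{B_\pm}$, bordered by rectangular disk pieces of $\widetilde{\mathcal{E}}$.

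Next, I would construct maximal peripheral components ${\mathbf P}_\pm \subset B_\pm$ with respect to $\widetilde{\mathcal{E}}$ containing $\widetilde{\Delta}$, choosing the cutting rectangles $\widetilde{\nabla}_\pm$ adjacent to $\widetilde{P}_\pm$ so as to cut off as much of $B_\pm$ as possible while retaining peripherality. The rectangle condition, available here by Lemma~\ref{lem_criteria_implication}, guarantees peripherality: if the defining condition failed for a candidate ${\mathbf P}_\pm$, the connecting arc of $\widetilde{\nabla}_\pm$ on $\partial \widetilde{H}$ would descend to a wave with respect to $\mathcal{D}$ disjoint from $\overline{\mathcal{E}^\ast}$, contradicting Lemma~\ref{waves_intersect}.

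The crux is showing that the two bigons ${\mathbf P}_- \cap \widetilde{D}$ and ${\mathbf P}_+ \cap \widetilde{D}$ are nested (the left case of Figure~\ref{fig_peripherals}); this would place $\widetilde{\Delta}$ inside the peripheral subdisk $\pi_H({\mathbf P}_- \cap {\mathbf P}_+)$ and finish the proof. If the bigons failed to nest, I would exhibit an arc on $\partial \widetilde{H}$ running from a connecting arc of $\widetilde{\nabla}_-$ across a subarc of $\partial \widetilde{D}$ to a connecting arc of $\widetilde{\nabla}_+$, disjoint from the preimage of $\overline{\mathcal{E}^\ast}$. Its projection to $\Sigma$ is a wave with respect to $\mathcal{D} \setminus \{D\}$ (where $D = \pi_H(\widetilde{D})$) whose wave disk crosses $D$ exactly once. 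Via an analog of Lemma~\ref{waves_intersect} deploying three-strand rectangles of type $\bigl((\partial D_{i_-}, \varepsilon_-), \partial D_d, (\partial D_{i_+}, \varepsilon_+); \cdot, \cdot\bigr)$ in place of ordinary rectangles, such a wave paired with an essential disk of $H^\ast$ disjoint from it would contradict the $2$-connectedness of some ${\mathcal H}_{\ell, d, i_-, \varepsilon_-, i_+, \varepsilon_+}$, and hence the doubly $2$-connectedness of ${\mathcal H}_d$ demanded by the double rectangle condition.

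The main obstacle is the argument in the previous paragraph: translating the non-nesting of the bigons into a concrete wave-through-$D$ configuration and proving the double analog of Lemma~\ref{waves_intersect}. Subtleties include selecting the correct indices $(i_\pm, \varepsilon_\pm)$ from the thin-part data (involving the sides of $\widetilde{D}'_\pm$ adjacent to the thin parts), identifying the relevant $3$-ball of $\mathcal{B}(\mathcal{D}^\ast)$ to pin down the $\ell$ index, and handling atypical configurations such as $\widetilde{D}'_- = \widetilde{D}'_+$ or when the two stacks of parallel disk pieces interact nontrivially across $\widetilde{D}$.
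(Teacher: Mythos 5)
Your overall instinct about where the double rectangle condition must enter (the graphs ${\mathcal H}_d$ and ${\mathcal H}_{\ell ,d,i_-,\varepsilon _-,i_+,\varepsilon _+}$, three-strand rectangles) is pointed in the right direction, but there are two genuine gaps. The first is your claim that the rectangle condition alone forces the flanking thin parts $\widetilde{P}_\pm $ to be peripheral. The mechanism you give does not work: by Lemma~\ref{lem_thin_structure}~(2) the rectangle $\widetilde{\nabla }_\pm $ bounding a thin part has its two intersection arcs on two \emph{distinct} disks of $\widetilde{\mathcal D}$ (a lift of $D_d$ and a lift of another disk $D_{i_\pm }$), so its connecting arcs project to arcs joining two different curves of $\partial {\mathcal D}$; such an arc is not a wave with respect to ${\mathcal D}$, and the failure of peripherality (some disk of $\widetilde{\mathcal D}_{B_\pm }$ trapped on the thin-part side of $\widetilde{\nabla }_\pm $) produces no wave at all. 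In the paper non-peripherality is a genuine possibility, excluded for a non-thick disk piece only by the \emph{double} rectangle condition, and it is handled in the same stroke as the non-nesting case: in either situation $\widetilde{P}_-\cup \widetilde{P}_+$ separates the remaining disks of $\widetilde{\mathcal D}_{B_-}\cup \widetilde{\mathcal D}_{B_+}$ into two non-empty families.

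The second gap is the step you yourself flag as the main obstacle, which is exactly the heart of the lemma and is left unproved; moreover the route you sketch is not easily closable, because there is no essential disk of $H^\ast $ disjoint from your putative wave available to feed into an analog of Lemma~\ref{waves_intersect}. The paper's argument runs differently: from the separation above one gets a partition of $\left( \{ -\} \times \varLambda _{d,-}\right) \cup \left( \{ +\} \times \varLambda _{d,+}\right) $ minus the two vertices determined by the thin parts, and double $2$-connectedness of ${\mathcal H}_d$ yields an edge across this partition. That edge may be a same-side edge, giving an ordinary rectangle via $2$-connectedness of ${\mathcal G}_{k_{d,\kappa },\ell _0,\ldots }$ (a case your sketch omits), or a crossing edge, giving a three-strand rectangle via ${\mathcal H}_{\ell _0,d,\ldots }$. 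The partner object is not a disjoint disk of $H^\ast $ but the disk/wave-disk $O^\ast $ extracted, exactly as in the proof of Lemma~\ref{waves_intersect}, from a disk of the fixed maximal system ${\mathcal E}^\ast $ relative to ${\mathcal D}^\ast $ (this is where membership in $\mathfrak{DR}$, with its essential-intersection conditions, is used), and the contradiction is that the subarc of $O^\ast \cap \Sigma $ crossing the rectangle lifts to an arc joining two lifted disks separated by $\widetilde{P}_-\cup \widetilde{P}_+$, hence crossing the thin parts, which is impossible for $\left( {\mathcal E},{\mathcal E}^\ast \right) $-parallelism regions. Without the peripherality shortcut repaired and without this central separation-plus-rectangle argument actually carried out, the proposal does not yet prove the lemma.
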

\begin{proof}
We use the notation in Subsection~\ref{criteria_general}.
Note that any disk $E^\ast $ in ${\mathcal E}^\ast $ is disjoint from $\overline{{\mathcal D}^\ast }$ or contains a wave disk with respect to ${\mathcal D}^\ast $.
Let $O^\ast $ denote $E^\ast $ in the former case, or the wave disk in the latter case.
Let $\ell _0$ denote the element in $\{ 1,2,\ldots ,m^\ast \} $ such that the closure of $H^\ast _{\ell _0}$ contains $O^\ast $.
Let $B^\ast _{\ell _0}$ denote the closure of a lift of $H^\ast _{\ell _0}$ in $\widetilde{H^\ast }$.
Let $\widetilde{O^\ast }$ denote the lift of $O^\ast $ in $B^\ast _{\ell _0}$, and $\widetilde{D^\ast }_{v,\nu }$ denote the $\nu $-side lift of $D_v$ in $B^\ast _{\ell _0}$ for $(v,\nu )\in A^\ast _{\ell _0}$.
Let $(V,W)$ denote the partition of $\bigl\{ (v,\nu )\in A^\ast _{\ell _0}\bigm| \widetilde{D^\ast }_{v,\nu }\cap \widetilde{O^\ast }=\emptyset \bigr\} $ such that $\widetilde{O^\ast }$ separates $\bigl\{ \widetilde{D^\ast }_{v,\nu }\bigm| (v,\nu )\in V\bigr\} $ and $\bigl\{ \widetilde{D^\ast }_{w,\mu }\bigm| (w,\mu )\in W\bigr\} $ in $B^\ast _{\ell _0}$.
Now, assume that a disk piece $\Delta $ of ${\mathcal D}$ with respect to ${\mathcal E}$ is neither thick nor contained in any peripheral subdisk.
Let $d$ denote the element in $\{ 1,2,\ldots ,n\} $ such that $D_d$ contains $\Delta $, and $\widetilde{D}_d$ be a lift of $D_d$ in $\widetilde{H}$.
Let $B_\kappa $ denote the $3$-ball in ${\mathcal B}\left( {\mathcal D}\right) $ adjacent to $\widetilde{D}_d$ from the $\kappa $-side for $\kappa \in \{ -,+\} $.
Let $\widetilde{D}_{\kappa ,i,\varepsilon }$ denote the $\varepsilon $-side lift of $D_i$ in $B_\kappa $ for $\kappa \in \{ -,+\} $ and $(i,\varepsilon )\in \varLambda _{d,\kappa }$.
Let $\widetilde{P}_-$ and $\widetilde{P}_+$ denote the parts of $B_-$ and $B_+$ with respect to $\widetilde{\mathcal E}$ adjacent to the lift of $\Delta $ in $\widetilde{D}_d$.
Since $\Delta $ is not thick, $\widetilde{P}_-$ and $\widetilde{P}_+$ are thin parts.
By Lemma~\ref{lem_thin_structure}~(2), there exists a unique member $\left( i_\kappa ,\varepsilon _\kappa \right) $ in $\varLambda _{d,\kappa }$ such that $\widetilde{P}_\kappa $ intersects only $\widetilde{D}_d$ and $\widetilde{D}_{\kappa ,i_\kappa ,\varepsilon _\kappa }$ in $\widetilde{\mathcal D}_{B_\kappa }$, for $\kappa \in \{ -,+\} $.
Since $\Delta $ is not contained in any peripheral subdisk, one of the following holds:
\begin{itemize}
\item Either $\widetilde{P}_-$ or $\widetilde{P}_+$ is not peripheral;
\item Both $\widetilde{P}_-$ and $\widetilde{P}_+$ are peripheral, and the peripheral components by them lie as in the right of Figure~\ref{fig_peripherals}.
\end{itemize}
In either case, $\widetilde{P}_-\cup \widetilde{P}_+$ separates $\bigl( \widetilde{\mathcal D}_{B_-}\cup \widetilde{\mathcal D}_{B_+}\bigr) \setminus \bigl\{ \widetilde{D}_d,\widetilde{D}_{-,i_-,\varepsilon _-},\widetilde{D}_{+,i_+,\varepsilon _+}\bigr\} $ into two or more non-empty subsets in $B_-\cup B_+$.
Let $\left( I_\triangleleft ,I_\triangleright \right) $ be a partition of $\left( \{ -\} \times \varLambda _{d,-}\right) \cup \left( \{ +\} \times \varLambda _{d,+}\right) \setminus \left\{ \left( -,i_-,\varepsilon _-\right) ,\left( +,i_+,\varepsilon _+\right) \right\} $ such that $\widetilde{P}_-\cup \widetilde{P}_+$ separates $\bigl\{ \widetilde{D}_{\kappa ,i,\varepsilon }\bigm| (\kappa ,i,\varepsilon )\in I_\triangleleft \bigr\} $ and $\bigl\{ \widetilde{D}_{\kappa ,i,\varepsilon }\bigm| (\kappa ,i,\varepsilon )\in I_\triangleright \bigr\} $ in $B_-\cup B_+$.
Since the graph ${\mathcal H}_d$ is doubly $2$-connected with respect to $\left( \{ -\} \times \varLambda _{d,-},\{ +\} \times \varLambda _{d,+}\right) $, there exist a member $\left( \kappa _\triangleleft ,i_\triangleleft ,\varepsilon _\triangleleft \right) $ in $I_\triangleleft $ and a member $\left( \kappa _\triangleright ,i_\triangleright ,\varepsilon _\triangleright \right) $ in $I_\triangleright $ that span an edge in ${\mathcal H}_d$.
We divide the argument into two cases:
\begin{itemize}
\item Suppose that $\kappa _\triangleleft $ and $\kappa _\triangleright $ coincide.
Since ${\mathcal G}_{k_{d,\kappa _\triangleleft },\ell _0,i_\triangleleft ,\varepsilon _\triangleleft ,i_\triangleright ,\varepsilon _\triangleright }$ is $2$-connected, there exist a member $\left( v_0,\nu _0\right) $ in $V$ and a member $\left( w_0,\mu _0\right) $ in $W$ that span an edge in ${\mathcal G}_{k_{d,\kappa _\triangleleft },\ell _0,i_\triangleleft ,\varepsilon _\triangleleft ,i_\triangleright ,\varepsilon _\triangleright }$.
By the definition, there exists a rectangle $R$ on $\Sigma $ of type $\left( \left( \partial D_{i_\triangleleft },\varepsilon _\triangleleft \right) ,\left( \partial D_{i_\triangleright },\varepsilon _\triangleright \right) ;\left( \partial D^\ast _{v_0},\nu _0\right) ,\left( \partial D^\ast _{w_0},\mu _0\right) \right) $;
\item Suppose that $\kappa _\triangleleft $ and $\kappa _\triangleright $ are opposite.
Since ${\mathcal H}_{\ell _0,d,i_\triangleleft ,\varepsilon _\triangleleft ,i_\triangleright ,\varepsilon _\triangleright }$ is $2$-connected, there exist a member $\left( v_0,\nu _0\right) $ in $V$ and a member $\left( w_0,\mu _0\right) $ in $W$ that span an edge in ${\mathcal H}_{\ell _0,d,i_\triangleleft ,\varepsilon _\triangleleft ,i_\triangleright ,\varepsilon _\triangleright }$.
By the definition, there exists a rectangle $R$ on $\Sigma $ of type $\left( \left( \partial D_{i_\triangleleft },\varepsilon _\triangleleft \right) ,\partial D_d,\left( \partial D_{i_\triangleright },\varepsilon _\triangleright \right) ;\left( \partial D^\ast _{v_0},\nu _0\right) ,\left( \partial D^\ast _{w_0},\mu _0\right) \right) $, as illustrated in Figure~\ref{fig_intersect_2}.
\end{itemize}
In either case, since $\widetilde{O^\ast }$ separates $\widetilde{D^\ast }_{v_0,\nu _0}$ and $\widetilde{D^\ast }_{w_0,\mu _0}$ in $B^\ast _{\ell _0}$, the rectangle $R$ contains a subarc of $O^\ast \cap \Sigma $ that separates the two edges in $\partial D^\ast _{v_0}$ and $\partial D^\ast _{w_0}$.
It follows that the lift of $O^\ast \cap R$ in $B_-\cup B_+$ connects the disks $\widetilde{D}_{\kappa _\triangleleft ,i_\triangleleft ,\varepsilon _\triangleleft }$ and $\widetilde{D}_{\kappa _\triangleright ,i_\triangleright ,\varepsilon _\triangleright }$, and hence goes across $\widetilde{P}_-\cup \widetilde{P}_+$.
This contradicts that $\widetilde{P}_-$ and $\widetilde{P}_+$ are thin parts.
\end{proof}
\noindent
Lemma~\ref{lem_central} slightly extends as follows.
\begin{lemma}\label{lem_thick_peripheral}
Let $\left( {\mathcal D},{\mathcal D}^\ast \right) $ be a pair in $\mathfrak{DR}$, and $\widetilde{D}$ be a disk in $\widetilde{\mathcal D}$.
Let $B_-$ and $B_+$ denote the $3$-balls in ${\mathcal B}\left( {\mathcal D}\right) $ such that $B_-\cap B_+=\widetilde{D}$.
Let ${\mathbf P}_-$ be a peripheral component of $B_-$ with respect to $\widetilde{\mathcal E}$.
Let $\widetilde{\Delta }$ be any disk piece in ${\mathbf P}_-\cap \widetilde{D}$, and $\widetilde{P}_+$ denote the part of $B_+$ adjacent to $\widetilde{\Delta }$.
Then one of the following holds:
\begin{itemize}
\item The disk piece $\widetilde{\Delta }$ is thick;
\item The part $\widetilde{P}_+$ is thin and peripheral, and the peripheral component by $\widetilde{P}_+$ is adjacent to a subdisk of ${\mathbf P}_-\cap \widetilde{D}$.
\end{itemize}
\end{lemma}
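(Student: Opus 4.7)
The plan is to dispose of the alternative where $\widetilde{\Delta}$ is thick trivially, and in the remaining case to prove successively that $\widetilde{P}_+$ is thin, that it is peripheral, and that its peripheral component, call it ${\mathbf P}$, satisfies ${\mathbf P}\cap\widetilde{D}\subset{\mathbf P}_-\cap\widetilde{D}$. The thinness of $\widetilde{P}_+$ is immediate, since a disk piece is by definition thick if and only if one of the two parts adjacent to it is thick.

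For peripherality, I would apply Lemma~\ref{lem_central} to $\pi_H(\widetilde{\Delta})$: as $\widetilde{\Delta}$ is not thick, $\pi_H(\widetilde{\Delta})$ lies in a peripheral subdisk, and lifting this to the component meeting $\widetilde{\Delta}$ produces maximal peripheral components ${\mathbf Q}_-\subset B_-$ and ${\mathbf Q}_+\subset B_+$ whose bigons on $\widetilde{D}$ are nested and whose intersection contains $\widetilde{\Delta}$. Because any part of $\widetilde{\mathcal D}$ with respect to $\widetilde{\mathcal E}$ lies on a single side of each disk piece of $\widetilde{\mathcal E}$, the containment $\widetilde{P}_+\subset{\mathbf Q}_+$ is forced, and Lemma~\ref{lem_peripheral_structure}~(1) applied to ${\mathbf Q}_+$ then yields that $\widetilde{P}_+$ is peripheral with ${\mathbf P}\subset{\mathbf Q}_+$.

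The main step is the inclusion ${\mathbf P}\cap\widetilde{D}\subset{\mathbf P}_-\cap\widetilde{D}$. I plan to identify the intersection arc $\widetilde{\alpha}$ on $\widetilde{D}$ that bounds the bigon ${\mathbf P}\cap\widetilde{D}$ as one of the $\widetilde{\mathcal E}$-edges of $\widetilde{\Delta}$. By Lemma~\ref{lem_thin_structure}~(2), the thin part $\widetilde{P}_+$ meets exactly two disks in $\widetilde{\mathcal D}_{B_+}$, namely $\widetilde{D}$ in the disk piece $\widetilde{\Delta}$ and one other disk, so the rectangle of $\widetilde{\mathcal E}$ that borders ${\mathbf P}$ meets $\widetilde{D}$ in a single intersection arc; this arc must be an edge of $\widetilde{\Delta}$, and by Lemma~\ref{lem_peripheral_structure}~(3) it is precisely $\widetilde{\alpha}$. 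Writing $\widetilde{\alpha}_-$ for the analogous arc bounding ${\mathbf P}_-\cap\widetilde{D}$, if $\widetilde{\alpha}=\widetilde{\alpha}_-$ the two bigons coincide (both being the side of the arc containing $\widetilde{\Delta}$); otherwise $\widetilde{\alpha}$ and $\widetilde{\alpha}_-$ are disjoint components of $\widetilde{\mathcal E}\cap\widetilde{D}$, and the chain $\widetilde{\alpha}\subset\widetilde{\Delta}\subset{\mathbf P}_-\cap\widetilde{D}$ together with $\partial\widetilde{\alpha}\subset\partial\widetilde{D}$ makes $\widetilde{\alpha}$ a properly embedded chord in the closed bigon ${\mathbf P}_-\cap\widetilde{D}$, cutting it into two subdisks, of which the one containing $\widetilde{\Delta}$ is exactly ${\mathbf P}\cap\widetilde{D}$.

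The obstacle that I expect to have to handle with the most care is this last planar argument, where the nesting of the two bigons must be shown to go in the right direction. The key leverage comes from pinning down $\widetilde{\alpha}$ as an $\widetilde{\mathcal E}$-edge of $\widetilde{\Delta}$ via Lemmas~\ref{lem_thin_structure}~(2) and \ref{lem_peripheral_structure}~(3); without that identification, one could in principle imagine $\widetilde{\alpha}$ sitting outside ${\mathbf P}_-\cap\widetilde{D}$ and the desired containment failing.
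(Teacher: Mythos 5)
Your first two steps (thinness of $\widetilde{P}_+$, and its peripherality via Lemma~\ref{lem_central} together with Lemma~\ref{lem_peripheral_structure}~(1), giving ${\mathbf P}\subset{\mathbf Q}_+$) are sound and parallel the paper's argument. The gap is in your final planar step, and it sits exactly at the point you flagged: knowing that the bounding arc $\widetilde{\alpha}$ of ${\mathbf P}\cap\widetilde{D}$ is an $\widetilde{\mathcal E}$-edge of $\widetilde{\Delta}$ does \emph{not} pin down the direction of nesting. The disk piece $\widetilde{\Delta}$ may have several $\widetilde{\mathcal E}$-edges (it need not be outermost in $\widetilde{D}$), and for each edge $e$ of $\widetilde{\Delta}$ the half-disk of the \emph{full} disk $\widetilde{D}$ on the $\widetilde{\Delta}$-side of $e$ contains everything except the region beyond $e$. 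Hence if the wall $\widetilde{\nabla}$ defining ${\mathbf P}$ lies over any edge of $\widetilde{\Delta}$ other than the one separating $\widetilde{\Delta}$ from $\widetilde{\alpha}_-$ (equivalently, other than $\widetilde{\alpha}_-$ itself or the edge facing it), then ${\mathbf P}\cap\widetilde{D}$ contains the complement of ${\mathbf P}_-\cap\widetilde{D}$ and the asserted containment fails. Your sentence ``the one containing $\widetilde{\Delta}$ is exactly ${\mathbf P}\cap\widetilde{D}$'' silently identifies a subdisk of the bigon ${\mathbf P}_-\cap\widetilde{D}$ with the half-disk ${\mathbf P}\cap\widetilde{D}$ of all of $\widetilde{D}$; that identification is precisely the statement to be proved, so the argument is circular at this point.

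What is missing is an argument that the defining wall of ${\mathbf P}$ sits over the edge of $\widetilde{\Delta}$ facing $\widetilde{\alpha}_-$, and for this you must bring back the data you discarded after establishing peripherality. This is what the paper does: it uses that ${\mathbf P}_-$ is contained in the maximal peripheral component ${\mathbf Q}_-$ of $B_-$ entering the peripheral subdisk (two peripheral components of $B_-$ whose bigons both contain $\widetilde{\Delta}$ are nested, since the far side of the defining rectangle of one must meet all of the at least three disks of $\widetilde{\mathcal D}_{B_-}$ while a peripheral component meets only two), that the bigons of ${\mathbf Q}_-$ and ${\mathbf Q}_+$ are nested by the very definition of a peripheral subdisk, and that ${\mathbf P}\subset{\mathbf Q}_+$; combining these forces the chords bounding ${\mathbf Q}_+\cap\widetilde{D}$, and hence ${\mathbf P}\cap\widetilde{D}$, to lie on the same side of $\widetilde{\Delta}$ as $\widetilde{\alpha}_-$, after which your planar argument closes correctly. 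As written, your proposal proves the conclusion only in the special case where $\widetilde{\Delta}$ has a single $\widetilde{\mathcal E}$-edge, so the last step needs to be repaired along the lines above rather than by the edge-identification alone.
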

\begin{proof}
Assume the failure of the first assertion.
Then $\widetilde{P}_+$ is thin by the definition, and $\pi _H\bigl( \widetilde{\Delta }\bigr) $ is contained in a peripheral subdisk by Lemma~\ref{lem_central}.
The lift of the peripheral subdisk in $\widetilde{D}$ is the intersection of maximal peripheral components of $B_-$ and $B_+$.
Those components contain ${\mathbf P}_-$ and $\widetilde{P}_+$.
By Lemma~\ref{lem_peripheral_structure}~(1), the thin part $\widetilde{P}_+$ is peripheral, and the peripheral component by $\widetilde{P}_+$ is contained in the maximal one.
Thus the relevant subdisks are well nested, to satisfy the last assertion.
\end{proof}

\begin{figure}[ht]
\begin{picture}(150,180)(0,0)
\put(0,0){\includegraphics{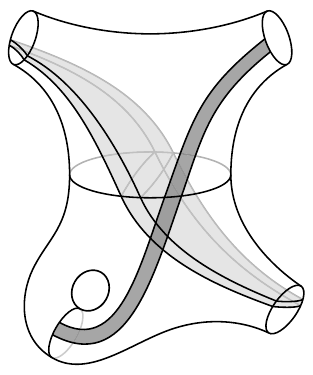}}
\put(127,71){\vector(-2,-1){23}}
\put(129,70){$\pi _H\bigl( \widetilde{P}_-\bigr) $}
\put(46,129){$\pi _H\bigl( \widetilde{P}_+\bigr) $}
\put(18,94){$D_d$}
\put(16,0){$D_{i_\triangleleft }$}
\put(141,164){$D_{i_\triangleright }$}
\put(45,56){$\partial D^\ast _{v_0}$}
\put(72,34){$\partial D^\ast _{w_0}$}
\put(129,122){\vector(-1,1){18}}
\put(130,115){$R$}
\end{picture}
\caption{The rectangle $R$ in a possible case where both $\widetilde{P}_-$ and $\widetilde{P}_+$ are peripheral.}
\label{fig_intersect_2}
\end{figure}

It remains to show the uniform finiteness inside of the peripheral subdisks.
We can divide it into the following two lemmas.
\begin{lemma}\label{lem_peripheral_number}
There exists the maximum in the set
\begin{equation*}
\bigl\{ \, \text{the number of peripheral subdisks of ${\mathcal D}$ with respect to ${\mathcal E}$}\bigm| \left( {\mathcal D},{\mathcal D}^\ast \right) \in \mathfrak{DR}\bigr\} .
\end{equation*}
\end{lemma}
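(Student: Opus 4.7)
The plan is to show that distinct peripheral subdisks on the same disk $D_i\in{\mathcal D}$ have distinct boundary intersection arcs, and that these arcs fall into uniformly finitely many $({\mathcal E},{\mathcal E}^\ast)$-parallelism classes with only bounded multiplicity per class.

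First, since $|{\mathcal D}|\leq 3g-3$, it suffices to bound, for each $D_i\in{\mathcal D}$, the number of peripheral subdisks lying in $D_i$, uniformly in $\bigl({\mathcal D},{\mathcal D}^\ast\bigr)\in\mathfrak{DR}$. Each peripheral subdisk $S$ in $D_i$ is realized as $\pi_H\bigl({\mathbf P}_-\cap{\mathbf P}_+\bigr)$, where ${\mathbf P}_\pm$ are maximal peripheral components of the $3$-balls $B_\pm\in{\mathcal B}({\mathcal D})$ on either side of a lift $\widetilde{D}$ of $D_i$, in the nested configuration from the left of Figure~\ref{fig_peripherals}. Let $\alpha_S$ denote the boundary intersection arc of $S$ in $D_i$, namely the image under $\pi_H$ of $\widetilde{\nabla}_\bullet\cap\widetilde{D}$, where $\widetilde{\nabla}_\bullet$ is the border (provided by Lemma~\ref{lem_thin_structure}(1)) of the maximal peripheral component producing the smaller of the two bigons ${\mathbf P}_\pm\cap\widetilde{D}$. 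Since $\alpha_S$ is the unique intersection arc on $\partial S\cap D_i$, distinct peripheral subdisks in $D_i$ yield distinct $\alpha_S$.

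Second, by Lemma~\ref{lem_arc_type}, the arcs $\alpha_S$ lie in uniformly finitely many $({\mathcal E},{\mathcal E}^\ast)$-parallelism classes. I would show that each class contains boundedly many $\alpha_S$. If three or more $\alpha_S$ were mutually $({\mathcal E},{\mathcal E}^\ast)$-parallel, lying in a single $E\in{\mathcal E}$, then by rechoosing an innermost pair among them, along the lines of the claims in the proof of Lemma~\ref{lem_disk_piece_type}, I may assume that a subarc $\omega$ of $\partial E\setminus\overline{{\mathcal E}^\ast}$ connecting the two chosen arcs has interior disjoint from $\partial\overline{{\mathcal D}}$. Then $\omega$ is a wave with respect to ${\mathcal D}$ disjoint from $\overline{{\mathcal E}^\ast}$, contradicting the rectangle condition via Lemma~\ref{waves_intersect}.

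Combining the two steps, the number of peripheral subdisks in each $D_i$ is uniformly bounded, completing the proof. The main obstacle is the innermost refinement in the second step: one must rule out interference from other intersection arcs of ${\mathcal D}$ with respect to ${\mathcal E}$ (not necessarily boundary arcs of peripheral subdisks) that might lie between two parallel $\alpha_S$'s, by iteratively rechoosing the pair as in the proof of Lemma~\ref{lem_disk_piece_type}, while preserving the peripheral-subdisk structure and ensuring the resulting $\omega$ genuinely lies in $\partial\overline{{\mathcal E}}\setminus\overline{{\mathcal E}^\ast}$.
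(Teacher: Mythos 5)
Your second step contains the fatal gap. After you ``rechoose an innermost pair,'' the two adjacent arcs are no longer the special arcs $\alpha _S$ bounding peripheral subdisks but arbitrary intersection arcs of $\overline{\mathcal D}\cap E$, and the subarc $\omega $ of $\partial E\setminus \overline{{\mathcal E}^\ast }$ joining them is then merely a connecting arc of ${\mathcal E}$ with respect to ${\mathcal D}$: it crosses one complementary component of $\overline{\mathcal D}$ from one disk of ${\mathcal D}$ to another (or returns to the same disk from the opposite side). Such an arc is not a wave. For $\omega $ to be a wave you need both of its endpoints to lie on the same disk of ${\mathcal D}$, approached from the same side (equivalently, on the same lift in a $3$-ball of ${\mathcal B}\left( {\mathcal D}\right) $), which in the claims inside the proof of Lemma~\ref{lem_disk_piece_type} is exactly what is supplied by the fact that all the arcs considered there lie on the boundary of a single disk piece $\Delta $ and by the separating-curve argument on the sphere $\partial Y$. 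Your arcs $\alpha _S$ for distinct peripheral subdisks lie on no common disk piece, and nothing in your setup forces the innermost pair to return to the same side of the same disk; indeed, the proof of Lemma~\ref{lem_thin_structure}~(2) shows that in legitimate configurations (stacks of thin parts, cf.\ Figure~\ref{fig_burger}) the innermost connecting arcs join two \emph{different} disks precisely so that no wave appears. A quick sanity check confirms the step cannot be repaired within your framework: your argument never uses the double rectangle condition, only the rectangle condition via Lemma~\ref{waves_intersect}, so if it were valid it would show that at most two intersection arcs in each $\left( {\mathcal E},{\mathcal E}^\ast \right) $-parallelism class occur for any pair in ${\mathfrak R}$; combined with Lemma~\ref{lem_arc_type} and Lemma~\ref{lem_disk_piece_type} this would make ${\mathfrak R}$ finite up to isotopy, contradicting the example of Lustig--Moriah cited in the introduction of infinitely many pairs satisfying the rectangle condition, and it would also render the thin/thick part analysis and the disk-pushing argument of Lemma~\ref{lem_peripheral_area} unnecessary.

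The missing idea is where the double rectangle condition enters: by Lemma~\ref{lem_central}, every disk piece of ${\mathcal D}$ with respect to ${\mathcal E}$ that is not contained in a peripheral subdisk is thick. Hence each peripheral subdisk lies in a complementary component in $\overline{\mathcal D}$ of the thick disk pieces, and the number of such components is at most $3g-3+T(c-1)$, where $T$ is the uniform bound on the number of thick disk pieces from Lemma~\ref{lem_thick_finiteness} and $c$ is the uniform bound, coming from Lemma~\ref{lem_disk_piece_type}, on the number of complementary components of a single disk piece in its disk. This counts complementary regions rather than boundary arcs, and avoids any claim about how many intersection arcs can lie in one parallelism class. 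Your first step (distinct peripheral subdisks in a fixed $D_i$ have essentially distinct boundary intersection arcs) is harmless, but it does not lead anywhere without a bound on the multiplicity per parallelism class, which is exactly the point you were unable to, and in this generality cannot, establish.
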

\begin{proof}
Let $\left( {\mathcal D},{\mathcal D}^\ast \right) $ be a pair in $\mathfrak{DR}$.
By Lemma~\ref{lem_central}, the number of peripheral subdisks of ${\mathcal D}$ with respect to ${\mathcal E}$ is less than or equal to the number of complementary components in $\overline{\mathcal D}$ of the thick disk pieces.
Recall that ${\mathcal D}$ consists of at most $3g-3$ disks, and they contains at most $T$ thick disk pieces.
By Lemma~\ref{lem_disk_piece_type}, there exists the maximum, denoted by $c$, in the set
\begin{equation*}
\left\{ \, 
\begin{minipage}{130pt}
the number of complementary components in $D$ of $\Delta $
\end{minipage}
\; \middle| \; 
\begin{minipage}{195pt}
$\left( {\mathcal D},{\mathcal D}^\ast \right) \in {\mathfrak R}$,\\
$D\in {\mathcal D}$,\\
$\Delta $ is a disk piece of ${\mathcal D}$ with respect to ${\mathcal E}$ in $D$
\end{minipage}
\right\} .
\end{equation*}
The number at issue is, therefore, coarsely bounded from above by $3g-3+T(c-1)$.
\end{proof}
\begin{lemma}\label{lem_peripheral_area}
There exists the maximum in the set
\begin{equation*}
\left\{ \, \Area \left( {\mathbf \Delta }\right) \; \middle| \; 
\begin{minipage}{210pt}
$\left( {\mathcal D},{\mathcal D}^\ast \right) \in \mathfrak{DR}$,\\
${\mathbf \Delta }$ is a peripheral subdisk of ${\mathcal D}$ with respect to ${\mathcal E}$
\end{minipage}
\right\} .
\end{equation*}
\end{lemma}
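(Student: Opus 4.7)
The plan is to translate the area bound into a bound on the number of parallel disk piece slices stratifying the peripheral subdisk, and to control that count through the combinatorial finiteness already established together with a wave argument against the double rectangle condition.

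By Lemma~\ref{lem_peripheral_structure}~(3) and the definition of a peripheral subdisk, for each pair $\left( {\mathcal D},{\mathcal D}^\ast \right) \in \mathfrak{DR}$ and each peripheral subdisk ${\mathbf \Delta}$ of ${\mathcal D}$ with respect to ${\mathcal E}$, the lift $\widetilde{\mathbf \Delta}={\mathbf P}_-\cap {\mathbf P}_+$ is the smaller of the two nested bigons on some disk $\widetilde D\in \widetilde{\mathcal D}$, and it is stratified into some number $N({\mathbf \Delta})$ of parallel disk pieces of $\widetilde{\mathcal D}$ with respect to $\widetilde{\mathcal E}$. I would first observe that all of these slices share the same $({\mathcal E},{\mathcal E}^\ast )$-configuration, as each is sandwiched between thin peripheral parts of the same two peripheral components ${\mathbf P}_-$ and ${\mathbf P}_+$. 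Hence they project to disk pieces of a single $({\mathcal E},{\mathcal E}^\ast )$-parallelism class, and $\Area({\mathbf \Delta})$ is governed by $N({\mathbf \Delta})$, so the task reduces to bounding $N({\mathbf \Delta})$ uniformly over $\mathfrak{DR}$.

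The bound on $N({\mathbf \Delta})$ I would extract through a wave argument patterned on the second claim in the proof of Lemma~\ref{lem_disk_piece_type} and on Lemma~\ref{lem_central}. Suppose $N({\mathbf \Delta})$ exceeds some threshold depending only on the constants already bounded: the parallelism classes of connecting arcs and intersection arcs from Lemma~\ref{lem_arc_type}, the maximum $T$ of thick disk pieces from Lemma~\ref{lem_thick_finiteness}, and the bound on the number of peripheral subdisks from Lemma~\ref{lem_peripheral_number}. A pigeonhole among the stratifying intersection arcs yields, near one end of the stack $\widetilde{\mathbf \Delta}$, a subarc of $\partial \overline{\mathcal E}$ disjoint from $\overline{{\mathcal E}^\ast }$ that bounds a wave disk with respect to ${\mathcal D}$; here the maximality of ${\mathbf P}_\pm $, together with the fact that the slab cannot be extended beyond the extremal slices, is used to push the configuration outside both peripheral components where the $\overline{{\mathcal E}^\ast }$-structure is controlled. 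By Lemma~\ref{lem_criteria_implication} the rectangle condition holds, and Lemma~\ref{waves_intersect} then contradicts the existence of such a wave. Combining this per-subdisk bound with Lemma~\ref{lem_disk_piece_type} delivers the claimed maximum on $\Area({\mathbf \Delta})$.

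The main obstacle will be the production of the wave disjoint from $\overline{{\mathcal E}^\ast }$. Inside $\widetilde{\mathbf \Delta}$ itself, both sides of $\widetilde D$ are occupied by peripheral components, so the wave cannot be detected from the interior of the stack directly; instead it must be detected at an extremity, by exploiting that neither ${\mathbf P}_-$ nor ${\mathbf P}_+$ can be enlarged to include one more parallel slice. Making this extremal analysis precise, and ensuring that the resulting wave is genuinely disjoint from $\overline{{\mathcal E}^\ast }$ after the pigeonhole reduction, is the most delicate part of the argument.
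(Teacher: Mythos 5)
There is a genuine gap. The heart of your argument is the claim that if the stack $\widetilde{\mathbf \Delta }$ has too many slices, then ``a pigeonhole among the stratifying intersection arcs yields, near one end of the stack, a subarc of $\partial \overline{\mathcal E}$ disjoint from $\overline{{\mathcal E}^\ast }$ that bounds a wave disk with respect to ${\mathcal D}$,'' contradicting Lemma~\ref{waves_intersect}. This step would fail. By Lemma~\ref{lem_peripheral_structure}~(2), a peripheral component meets exactly two distinct disks $\widetilde{D}_1,\widetilde{D}_2$ of $\widetilde{\mathcal D}$, so the arcs of $\partial \overline{\mathcal E}$ running through the stack are connecting arcs joining two \emph{different} disks of ${\mathcal D}$; such arcs are not waves with respect to ${\mathcal D}$, and no wave appears merely because the stack is long. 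Indeed, an arbitrarily tall stack of thin parts is locally consistent with the rectangle condition (this is exactly why Lemma~\ref{lem_central} can only say that non-thick pieces lie \emph{inside} peripheral subdisks, i.e.\ the double rectangle condition gives no direct control there), so no local argument at the extremity of a single disk $\widetilde{D}$ can bound $\Area ({\mathbf \Delta })$. Relatedly, finiteness of the parallelism classes of arcs and disk pieces (Lemmas~\ref{lem_arc_type} and \ref{lem_disk_piece_type}) bounds the \emph{types} occurring, not their multiplicity, so it cannot by itself bound $N({\mathbf \Delta })$; and your preliminary assertion that all slices of $\widetilde{\mathbf \Delta }$ lie in a single $\left( {\mathcal E},{\mathcal E}^\ast \right) $-parallelism class is unjustified, since the parts of ${\mathbf P}_\pm $ between consecutive slices need not all be thin.

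What is actually needed, and what the paper does, is a global propagation (``disk pushing'') argument: starting from $\widetilde{\mathbf \Delta }_1={\mathbf P}_1\cap \widetilde{D}_1$, one pushes the peripheral component through the next disk $\widetilde{D}_2$ into the adjacent $3$-ball and iterates, producing a sequence $\widetilde{\mathbf \Delta }_1,\widetilde{\mathbf \Delta }_2,\ldots $ whose areas are monotonically non-increasing by Lemma~\ref{lem_peripheral_structure}~(3). One then shows: the areas cannot stay at a positive constant forever (otherwise the images of the bounding rectangular pieces of $\widetilde{\mathcal E}$ form an annulus in a disk of ${\mathcal E}$, contradicting essential intersection of ${\mathcal E}$ and ${\mathcal D}$); the number of steps where the area strictly drops is at most $T$, because each such step forces a thick disk piece via Lemma~\ref{lem_thick_peripheral}; and at each drop the ratio of decrease is controlled by $T$ and the uniform bound $c$ on complementary components of a disk piece. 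These three facts bound $\Area ({\mathbf \Delta })$ by a geometric sum depending only on $T$ and $c$. None of this propagation mechanism (monotonicity across balls, the annulus obstruction, thickness forcing at drops, controlled drop ratio) is present in your proposal, and without it the uniform bound does not follow.
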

\noindent
Here, $\Area \left( {\mathbf \Delta }\right) $ denotes the number of disk pieces in ${\mathbf \Delta }$.
We use this symbol not only for a peripheral subdisk ${\mathbf \Delta }$, but also for a general disk composed of some disk pieces of ${\mathcal D}$ with respect to ${\mathcal E}$, or of $\widetilde{\mathcal D}$ with respect to $\widetilde{\mathcal E}$.
\begin{proof}[Proof of Lemma~\ref{lem_peripheral_area}]
This proof is based on the idea of ``disk pushing procedure'' by Lustig--Moriah \cite[Section~5]{Lustig-Moriah}.
Let $\left( {\mathcal D},{\mathcal D}^\ast \right) $ be a pair in $\mathfrak{DR}$, and ${\mathbf \Delta }$ be a peripheral subdisk of ${\mathcal D}$ with respect to ${\mathcal E}$.
We construct a sequence of disks as illustrated schematically in Figure~\ref{fig_pushing}, and derive from it a uniform upper bound for $\Area \left( {\mathbf \Delta }\right) $ as follows.
Let $\widetilde{\mathbf \Delta }_1$ be a lift of ${\mathbf \Delta }$ in $\widetilde{H}$, and $\widetilde{D}_1$ denote the disk in $\widetilde{\mathcal D}$ containing $\widetilde{\mathbf \Delta }_1$.
Let $B_0$ and $B_1$ denote the $3$-balls in ${\mathcal B}\left( {\mathcal D}\right) $ such that $B_0\cap B_1=\widetilde{D}_1$.
Let ${\mathbf P}_0$ and ${\mathbf P}_1$ denote the maximal peripheral components of $B_0$ and $B_1$, respectively, with respect to $\widetilde{\mathcal E}$ containing $\widetilde{\mathbf \Delta }_1$.
We may suppose that $\widetilde{\mathbf \Delta }_1={\mathbf P}_1\cap \widetilde{D}_1$ without loss of generality.
For an inductive construction, assume that $\widetilde{\mathbf \Delta }_\iota $, $\widetilde{D}_\iota $, $B_\iota $, ${\mathbf P}_\iota $ have been defined for a positive integer $\iota $.
If ${\mathbf P}_\iota $ is empty, then set $\widetilde{\mathbf \Delta }_{\iota +1}$, $\widetilde{D}_{\iota +1}$, $B_{\iota +1}$, ${\mathbf P}_{\iota +1}$ to be empty.
Assume ${\mathbf P}_\iota $ to be non-empty.
By Lemma~\ref{lem_peripheral_structure}~(2), it intersects only $\widetilde{D}_\iota $ and another disk $\widetilde{D}_{\iota +1}$ in $\widetilde{\mathcal D}_{B_\iota }$.
Let $B_{\iota +1}$ denote the $3$-ball in ${\mathcal B}\left( {\mathcal D}\right) $ adjacent to $\widetilde{D}_{\iota +1}$ other than $B_\iota $, and $\widetilde{\mathbf \Delta }'_{\iota +1}$ denote the disk ${\mathbf P}_\iota \cap \widetilde{D}_{\iota +1}$.
If there exist peripheral components of $B_{\iota +1}$ with respect to $\widetilde{\mathcal E}$ that intersect $\widetilde{D}_{\iota +1}$ in $\widetilde{\mathbf \Delta }'_{\iota +1}$, then let ${\mathbf P}_{\iota +1}$ be one of those peripheral components such that $\Area \bigl( {\mathbf P}_{\iota +1}\cap \widetilde{D}_{\iota +1}\bigr) $ is maximal.
If not exist, then set ${\mathbf P}_{\iota +1}$ to be empty.
Let $\widetilde{\mathbf \Delta }_{\iota +1}$ denote the set ${\mathbf P}_{\iota +1}\cap \widetilde{D}_{\iota +1}$.
In this way, we obtain the sequence $\bigl( \widetilde{\mathbf \Delta }_\iota \bigr) _{\iota \in {\mathbb N}}$ together with $\bigl( \widetilde{D}_\iota \bigr) _{\iota \in {\mathbb N}}$, $\left( B_\iota \right) _{\iota \in {\mathbb N}}$, $\left( {\mathbf P}_\iota \right) _{\iota \in {\mathbb N}}$ and $\bigl( \widetilde{\mathbf \Delta }'_{\iota +1}\bigr) _{\iota \in {\mathbb N}}$.
Note that $\Area \bigl( \widetilde{\mathbf \Delta }'_{\iota +1}\bigr) \geq \Area \bigl( \widetilde{\mathbf \Delta }_{\iota +1}\bigr) $ since $\widetilde{\mathbf \Delta }'_{\iota +1}\supset \widetilde{\mathbf \Delta }_{\iota +1}$, and that $\Area \bigl( \widetilde{\mathbf \Delta }_\iota \bigr) =\Area \bigl( \widetilde{\mathbf \Delta }'_{\iota +1}\bigr) $ by Lemma~\ref{lem_peripheral_structure}~(3).
The sequence $\bigl( \Area \bigl( \widetilde{\mathbf \Delta }_\iota \bigr) \bigr) _{\iota \in {\mathbb N}}$ is, therefore, monotonically decreasing.
We analyze how it decreases in the following claims.
\begin{claim}
There exists a positive integer such that $\Area \bigl( \widetilde{\mathbf \Delta }_\iota \bigr) =0$ for every greater integer $\iota $.
\end{claim}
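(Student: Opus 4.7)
Since $\bigl(\Area\bigl(\widetilde{\mathbf \Delta}_\iota\bigr)\bigr)_{\iota\in{\mathbb N}}$ is a weakly decreasing sequence of non-negative integers (as established immediately before the claim), the claim reduces to ruling out the possibility that it stabilizes at some positive value. The plan is to suppose for contradiction that $\Area\bigl(\widetilde{\mathbf \Delta}_\iota\bigr)=a>0$ for every $\iota\geq\iota_0$, and to extract a wave with respect to ${\mathcal D}$ disjoint from $\overline{{\mathcal E}^\ast}$, contradicting the rectangle condition via Lemma~\ref{waves_intersect}.

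First I would exploit the stabilization to force a rigid matching. The containment $\widetilde{\mathbf \Delta}'_{\iota+1}\supset\widetilde{\mathbf \Delta}_{\iota+1}$ combined with $\Area\bigl(\widetilde{\mathbf \Delta}'_{\iota+1}\bigr)=\Area\bigl(\widetilde{\mathbf \Delta}_{\iota+1}\bigr)=a$ forces $\widetilde{\mathbf \Delta}'_{\iota+1}=\widetilde{\mathbf \Delta}_{\iota+1}$ for every $\iota\geq\iota_0$, since both are sub-bigons of $\widetilde{D}_{\iota+1}$ composed of the same number of indivisible disk pieces. Equivalently, ${\mathbf P}_\iota\cap\widetilde{D}_{\iota+1}={\mathbf P}_{\iota+1}\cap\widetilde{D}_{\iota+1}$, so the peripheral components assemble into a connected, infinitely extending ``tube'' $T=\bigcup_{\iota\geq\iota_0}{\mathbf P}_\iota$ traversing the pairwise distinct $3$-balls $B_{\iota_0},B_{\iota_0+1},\dots$ along an injective ray in the tree of $3$-balls dual to $\widetilde{\mathcal D}$ in $\widetilde{H}$.

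Next I would invoke pigeonhole to produce periodicity. The projected triple $\bigl(\pi_H(B_\iota),\pi_H(\widetilde{D}_\iota),\pi_H(\widetilde{\mathbf \Delta}_\iota)\bigr)$ takes values in a finite set: ${\mathcal D}$ has at most $3g-3$ disks, the complementary components of $\overline{\mathcal D}$ in $H$ are finite in number, and by Lemma~\ref{lem_disk_piece_type} there are only finitely many $({\mathcal E},{\mathcal E}^\ast)$-parallelism classes of disk pieces, hence only finitely many ways to describe a bigon of $a$ such pieces. Thus there exist $\iota_0\leq\iota<\iota'$ with identical triples, and a non-trivial deck transformation $\gamma\in\pi_1(H)$ carrying the configuration at $\iota'$ to that at $\iota$. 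Because the advancement rule in the stabilized regime is determined by the current state, $\gamma$-equivariance then propagates, giving $\gamma\widetilde{\mathbf \Delta}_{\iota'+k}=\widetilde{\mathbf \Delta}_{\iota+k}$ for every $k\geq0$, so $T$ is $\gamma$-periodic of period $p=\iota'-\iota$.

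The main obstacle, and the essential topological step, is converting this $\gamma$-periodicity into a contradiction. I would consider the boundary trace $T\cap\partial\widetilde{H}$, which by Lemma~\ref{lem_peripheral_structure}~(3) and Lemma~\ref{lem_thin_structure}~(1) is an infinite chain of rectangles stitched along subarcs of $\partial\widetilde{D}_\iota$, and which by $\gamma$-periodicity descends to a closed annulus $A\subset\Sigma$ whose core curve $\eta$ represents the non-trivial class $[\gamma]\in\pi_1(H)$. Each constituent rectangle is the $\partial\widetilde{H}$-trace of a thin part, that is, a region of $({\mathcal E},{\mathcal E}^\ast)$-parallelism; by tracking how $\eta$ meets $\partial\overline{\mathcal D}$ and $\partial\overline{{\mathcal E}^\ast}$ across the thin regions (whose structure on $\partial\widetilde{H}$ is, up to parallelism, a standard rectangle), I would isolate an innermost subarc of $\eta$ (or of its suitable push-off in $A$) that cobounds a bigon on $\Sigma$ with a subarc of $\partial\overline{\mathcal D}$ and is disjoint from $\overline{{\mathcal E}^\ast}$. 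Such an arc is a wave with respect to ${\mathcal D}$ disjoint from $\overline{{\mathcal E}^\ast}$, which contradicts the rectangle condition by Lemma~\ref{waves_intersect} just as in the proofs of Lemmas~\ref{lem_arc_type} and \ref{lem_thin_structure}. The remainder of the argument is pigeonhole in a tree.
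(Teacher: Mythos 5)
Your first two steps match the paper's argument: stabilization of the areas forces $\widetilde{\mathbf \Delta }'_{\iota +1}=\widetilde{\mathbf \Delta }_{\iota +1}$, so ${\mathbf P}_{\iota +1}$ is uniquely determined by ${\mathbf P}_\iota $, and then finiteness (the paper pigeonholes on the finitely many parts of ${\mathcal D}$ with respect to ${\mathcal E}$, you pigeonhole on projected triples and package the recurrence as a deck transformation $\gamma $) makes the sequence $\left( \pi _H\left( {\mathbf P}_\iota \right) \right) $ eventually periodic. Up to there the proposal is sound, modulo the small point that equality of images under $\pi _H$ plus the rigidity of the continuation rule is what lets you promote the coincidence to $\gamma $-equivariance, and that embeddedness of the projected strip in $\Sigma $ over one period is not automatic.

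The gap is in the concluding step. First, your criterion for a wave is the opposite of the definition: an arc that cobounds a bigon on $\Sigma $ with a subarc of $\partial \overline{\mathcal D}$ is precisely what the definition of a wave excludes; a wave must bound a bigon \emph{in $H$} whose other edge lies in $\overline{\mathcal D}$, while admitting no such surface bigon. Second, even with the definition corrected, no mechanism is given for extracting a wave from the core $\eta $ of your annulus: the subarcs of $\eta $ cut off by $\partial \overline{\mathcal D}$ run across each rectangle $\pi _H\bigl( {\mathbf P}_\iota \cap \partial \widetilde{H}\bigr) $ from $\partial D_\iota $ to $\partial D_{\iota +1}$, i.e.\ from one disk of ${\mathcal D}$ to the next, so they do not return to the same disk from the same side and are not candidates for waves; moreover the peripheral components may contain arcs of $\partial \overline{{\mathcal E}^\ast }$ crossing their traces, so disjointness from $\overline{{\mathcal E}^\ast }$ is not ensured either. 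The paper closes the argument differently and more directly: each ${\mathbf P}_\iota $ is cut off from $B_\iota $ by a rectangular disk piece of $\widetilde{\mathcal E}$ with respect to $\widetilde{\mathcal D}$, and the periodicity makes the images of these rectangles a cyclic chain, glued along intersection arcs, forming an annulus inside a single disk of ${\mathcal E}$; this is impossible because ${\mathcal E}$ and ${\mathcal D}$ intersect essentially in $H$ (a hypothesis built into ${\mathfrak R}$), with no appeal to the rectangle condition at this point. To repair your proof you should either reproduce this annulus-in-a-disk contradiction from your $\gamma $-periodic tube, or supply an actual construction of a wave disk in $H$ together with its disjointness from some $O^\ast $, neither of which is currently indicated.
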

\begin{proof}
Since $\bigl( \Area \bigl( \widetilde{\mathbf \Delta }_\iota \bigr) \bigr) _{\iota \in {\mathbb N}}$ is a decreasing sequence of non-negative integers, it stays constant after $\iota $ exceeds some integer $L$.
It remains to show that the constant area is zero.
If not, $\left( {\mathbf P}_\iota \right) _{\iota >L}$ is a sequence of non-empty peripheral components with constant cross-section area.
Then ${\mathbf P}_{\iota +1}$ is uniquely determined by ${\mathbf P}_\iota $ to be adjacent to the same subdisk in $\widetilde{D}_{\iota +1}$, for each integer $\iota $ greater than $L$.
Note that there are only finitely many parts of ${\mathcal D}$ with respect to ${\mathcal E}$, and hence finitely many images of peripheral components.
It follows that the sequence $\left( \pi _H\left( {\mathbf P}_\iota \right) \right) _{\iota >L}$ is cyclic.
Note that ${\mathbf P}_\iota $ is cut off from $B_\iota $ by a rectangular disk piece of $\widetilde{\mathcal E}$ with respect to $\widetilde{\mathcal D}$.
The images of such disk pieces form an annulus in a disk in ${\mathcal E}$, to contradict that ${\mathcal E}$ and ${\mathcal D}$ intersect essentially.
\end{proof}
\noindent
This says that the length of the sequence $\bigl( \widetilde{\mathbf \Delta }_\iota \bigr) _{\iota \in {\mathbb N}}$ is essentially finite, but does not mean the uniform boundedness of that length.
Instead we consider the length of $\bigl( \Area \bigl( \widetilde{\mathbf \Delta }_\iota \bigr) \bigr) _{\iota \in {\mathbb N}}$ after reducing repetitions of the same areas.
Let $N$ denote the set $\bigl\{ \iota \in {\mathbb N}\bigm| \Area \bigl( \widetilde{\mathbf \Delta }_\iota \bigr) >\Area \bigl( \widetilde{\mathbf \Delta }_{\iota +1}\bigr) \bigr\} $.
\begin{claim}
The cardinality of $N$ is less than or equal to the maximum number $T$ of thick disk pieces.
\end{claim}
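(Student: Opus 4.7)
The plan is to construct, for each $\iota \in N$, a specific thick disk piece $\widetilde{\Delta}_\iota$ of $\widetilde{\mathcal D}$ with respect to $\widetilde{\mathcal E}$, and then to check that the assignment $\iota \mapsto \pi_H(\widetilde{\Delta}_\iota)$ injects $N$ into the set of thick disk pieces of ${\mathcal D}$ with respect to ${\mathcal E}$ for the current pair, which has cardinality at most $T$ by Lemma~\ref{lem_thick_finiteness}. By Lemma~\ref{lem_thin_structure}(1), ${\mathbf P}_{\iota+1}$ is cut off from $B_{\iota+1}$ by a rectangular disk piece $\widetilde{\nabla}_{\iota+1}$ of $\widetilde{\mathcal E}$ with respect to $\widetilde{\mathcal D}$, whose trace on $\widetilde{D}_{\iota+1}$ is a chord $\widetilde{\alpha}'_{\iota+1}$ bounding $\widetilde{\mathbf \Delta}_{\iota+1}$; I would take $\widetilde{\Delta}_\iota$ to be the disk piece of $\widetilde{\mathcal D}$ with respect to $\widetilde{\mathcal E}$ in $\widetilde{D}_{\iota+1}$ adjacent to $\widetilde{\alpha}'_{\iota+1}$ on the side opposite $\widetilde{\mathbf \Delta}_{\iota+1}$. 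Since $\iota\in N$ gives $\widetilde{\mathbf \Delta}_{\iota+1}\subsetneq\widetilde{\mathbf \Delta}'_{\iota+1}$, the disk piece $\widetilde{\Delta}_\iota$ actually lies in $\widetilde{\mathbf \Delta}'_{\iota+1}\setminus\widetilde{\mathbf \Delta}_{\iota+1}\subset{\mathbf P}_\iota\cap\widetilde{D}_{\iota+1}$.

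For thickness I would apply Lemma~\ref{lem_thick_peripheral} with $(\widetilde{D},B_-,B_+,{\mathbf P}_-,\widetilde{\Delta})=(\widetilde{D}_{\iota+1},B_\iota,B_{\iota+1},{\mathbf P}_\iota,\widetilde{\Delta}_\iota)$ and rule out the ``thin and peripheral'' alternative. Indeed, if that alternative held, there would be a peripheral component ${\mathbf Q}$ of $B_{\iota+1}$ whose bigon on $\widetilde{D}_{\iota+1}$ lies inside $\widetilde{\mathbf \Delta}'_{\iota+1}$ and contains $\widetilde{\Delta}_\iota$. Its cutting arc on $\widetilde{D}_{\iota+1}$ is disjoint from $\widetilde{\alpha}'_{\iota+1}$; since $\widetilde{\Delta}_\iota$ lies on the side of $\widetilde{\alpha}'_{\iota+1}$ opposite $\widetilde{\mathbf \Delta}_{\iota+1}$, a short disjoint-chord analysis in $\widetilde{D}_{\iota+1}$ forces the cutting arc of ${\mathbf Q}$ to be a chord strictly between $\widetilde{\alpha}'_{\iota+1}$ and the outer boundary arc of $\widetilde{\mathbf \Delta}'_{\iota+1}$, and forces ${\mathbf Q}$'s bigon to properly contain $\widetilde{\mathbf \Delta}_{\iota+1}$. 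This presents ${\mathbf Q}$ as a peripheral component with bigon in $\widetilde{\mathbf \Delta}'_{\iota+1}$ of strictly larger area than $\widetilde{\mathbf \Delta}_{\iota+1}$, contradicting the maximal-area choice of ${\mathbf P}_{\iota+1}$.

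The hard part will be injectivity. If $\iota_1<\iota_2$ in $N$ satisfy $\pi_H(\widetilde{\Delta}_{\iota_1})=\pi_H(\widetilde{\Delta}_{\iota_2})$, then a deck transformation $\sigma$ takes $\widetilde{\Delta}_{\iota_1}$ to $\widetilde{\Delta}_{\iota_2}$. I would isolate an intrinsic characterization of the edge $\widetilde{\alpha}'_{\iota+1}$ within $\widetilde{\Delta}_\iota$, namely as the edge on which the maximal-area peripheral bigon inside ${\mathbf P}_\iota\cap\widetilde{D}_{\iota+1}$ abuts, to ensure that $\sigma$ carries the pair $(\widetilde{\Delta}_{\iota_1},\widetilde{\alpha}'_{\iota_1+1})$ to $(\widetilde{\Delta}_{\iota_2},\widetilde{\alpha}'_{\iota_2+1})$, and hence ${\mathbf P}_{\iota_1+1}$ to ${\mathbf P}_{\iota_2+1}$. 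The $\sigma$-equivariance of the deterministic rule defining the sequence then propagates this to $\sigma(\widetilde{\mathbf \Delta}_{\iota_1+j})=\widetilde{\mathbf \Delta}_{\iota_2+j}$ for all $j\geq 1$, so $\Area(\widetilde{\mathbf \Delta}_{\iota_1+j})=\Area(\widetilde{\mathbf \Delta}_{\iota_2+j})$. Combined with the monotone non-increase of the area sequence established earlier, this forces the sequence of areas to be constant on $[\iota_1+1,\iota_2+1]$, contradicting $\iota_2\in N$. Pinning down an intrinsic characterization of $\widetilde{\alpha}'_{\iota+1}$ robust enough to survive the deck transformation is where the real work lies.
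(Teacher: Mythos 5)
Your overall strategy is the same as the paper's: attach to each $\iota \in N$ a thick disk piece and use the strict decrease of $\Area \bigl( \widetilde{\mathbf \Delta }_\iota \bigr) $ along $N$ to show the assignment cannot repeat, so that $|N|\leq T$. But as written there are two genuine gaps. First, your disk piece is undefined precisely when ${\mathbf P}_{\iota +1}$ is empty, and this case is unavoidable: by Lemma~\ref{lem_peripheral_structure}~(3) a non-empty ${\mathbf P}_{\iota +1}$ meets $\widetilde{D}_{\iota +1}$ in a bigon of area at least one, so $\Area \bigl( \widetilde{\mathbf \Delta }_{\iota +1}\bigr) =0$ forces ${\mathbf P}_{\iota +1}=\emptyset $; since the areas are eventually zero, the last index of $N$ always falls in this case, and then there is no rectangle $\widetilde{\nabla }_{\iota +1}$ and no chord $\widetilde{\alpha }'_{\iota +1}$ to work with. (The case is repairable, since then Lemma~\ref{lem_thick_peripheral} makes every disk piece of $\widetilde{\mathbf \Delta }'_{\iota +1}$ thick, but you must say which one you take and feed it into the injectivity argument.) Second, and more seriously, injectivity is exactly the step you leave open, and your choice of disk piece is what makes it hard: the piece adjacent to $\widetilde{\alpha }'_{\iota +1}$ from outside $\widetilde{\mathbf \Delta }_{\iota +1}$ carries no visible record of the step $\iota $; the ``intrinsic characterization'' you ask for refers to $\widetilde{\mathbf \Delta }'_{\iota +1}={\mathbf P}_\iota \cap \widetilde{D}_{\iota +1}$, which is data from the previous step rather than data intrinsic to the disk piece; and the propagation argument would additionally have to rule out a deck transformation exchanging the two sides of $\widetilde{D}_{\iota +1}$ (sending $B_{\iota _1+1}$ to $B_{\iota _2}$) and to cope with the non-uniqueness of the maximal-area choice of ${\mathbf P}_{\iota +1}$. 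So the proof is incomplete at its crucial point.

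The paper sidesteps all of this by choosing the disk piece at the other end of $\widetilde{\mathbf \Delta }'_{\iota +1}$: the disk piece $\widetilde{\Delta }_{\iota +1}\subset \widetilde{D}_{\iota +1}$ adjacent to the thin part $\widetilde{P}_\iota $ by which ${\mathbf P}_\iota $ is a peripheral component; this is defined whether or not ${\mathbf P}_{\iota +1}$ is empty. If it were not thick, Lemma~\ref{lem_thick_peripheral} would produce a peripheral component of $B_{\iota +1}$ meeting $\widetilde{D}_{\iota +1}$ in all of $\widetilde{\mathbf \Delta }'_{\iota +1}$, which by the maximal-area choice would force $\Area \bigl( \widetilde{\mathbf \Delta }_{\iota +1}\bigr) =\Area \bigl( \widetilde{\mathbf \Delta }'_{\iota +1}\bigr) =\Area \bigl( \widetilde{\mathbf \Delta }_\iota \bigr) $, contradicting $\iota \in N$. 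Injectivity then comes almost for free: one side of this thick disk piece is the thin part $\widetilde{P}_\iota $, whose peripheral component has cross-section of area $\Area \bigl( \widetilde{\mathbf \Delta }_\iota \bigr) $, and thickness pins down which side is the thin one; hence a coincidence $\pi _H\bigl( \widetilde{\Delta }_{\iota _1+1}\bigr) =\pi _H\bigl( \widetilde{\Delta }_{\iota _2+1}\bigr) $ would force $\Area \bigl( \widetilde{\mathbf \Delta }_{\iota _1}\bigr) =\Area \bigl( \widetilde{\mathbf \Delta }_{\iota _2}\bigr) $, impossible because the areas at distinct indices of $N$ are distinct. If you wish to keep your choice of disk piece you must supply the missing characterization; switching to the paper's choice dissolves the difficulty.
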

\begin{proof}
Suppose that $\iota $ belongs to $N$, that is, $\Area \bigl( \widetilde{\mathbf \Delta }_\iota \bigr) >\Area \bigl( \widetilde{\mathbf \Delta }_{\iota +1}\bigr) $.
Since $\Area \bigl( \widetilde{\mathbf \Delta }_\iota \bigr) $ is positive, ${\mathbf P}_\iota $ is not empty.
Let $\widetilde{P}_\iota $ denote the peripheral thin part such that ${\mathbf P}_\iota $ is the peripheral component by $\widetilde{P}_\iota $.
Let $\widetilde{\Delta }_{\iota +1}$ denote the disk piece in $\widetilde{D}_{\iota +1}$ adjacent to $\widetilde{P}_\iota $.
Assume that $\widetilde{\Delta }_{\iota +1}$ is not thick.
Then by Lemma~\ref{lem_thick_peripheral}, there is a peripheral component of $B_{\iota +1}$ adjacent to $\widetilde{\mathbf \Delta }'_{\iota +1}$.
By the construction, that peripheral component must be ${\mathbf P}_{\iota +1}$.
It follows that $\Area \bigl( \widetilde{\mathbf \Delta }_\iota \bigr) =\Area \bigl( \widetilde{\mathbf \Delta }'_{\iota +1}\bigr) =\Area \bigl( \widetilde{\mathbf \Delta }_{\iota +1}\bigr) $, a contradiction.
Thus $\widetilde{\Delta }_{\iota +1}$, and hence $\pi _H\bigl( \widetilde{\Delta }_{\iota +1}\bigr) $, is a thick disk piece.
Note that the sequence $\bigl( \pi _H\bigl( \widetilde{\Delta }_{\iota +1}\bigr) \bigr) _{\iota \in N}$ does not recur the same term, since $\bigl( \Area \bigl( \widetilde{\mathbf \Delta }_\iota \bigr) \bigr) _{\iota \in N}$ is strictly decreasing.
This gives the claimed bound.
\end{proof}
\noindent
Next we estimate how sharply $\Area \bigl( \widetilde{\mathbf \Delta }_\iota \bigr) $ decreases at each step of $\iota $.
\begin{claim}
There exists a constant $r$, independent from the choice of $\left( {\mathcal D},{\mathcal D}^\ast \right) $ in $\mathfrak{DR}$, such that $\Area \bigl( \widetilde{\mathbf \Delta }_\iota \bigr) <r\bigl( \Area \bigl( \widetilde{\mathbf \Delta }_{\iota +1}\bigr) +1\bigr) $ for every positive integer $\iota $.
\end{claim}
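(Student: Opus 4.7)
The plan is to start from the equality $\Area\bigl(\widetilde{\mathbf \Delta}_\iota\bigr) = \Area\bigl(\widetilde{\mathbf \Delta}'_{\iota+1}\bigr)$ already noted in the proof, and then to bound $\Area\bigl(\widetilde{\mathbf \Delta}'_{\iota+1}\bigr)$ by classifying its constituent disk pieces by their neighbor parts on the $B_{\iota+1}$ side and exploiting the cross-sectional maximality built into the choice of ${\mathbf P}_{\iota+1}$. The constant $r$ will be assembled from two uniform quantities: the bound $T$ on thick disk pieces from Lemma~\ref{lem_thick_finiteness}, and a uniform bound on the number of maximal peripheral components of a single $3$-ball in $\mathcal{B}(\mathcal{D})$ that are adjacent to a single disk of $\widetilde{\mathcal D}$.

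The first step is to apply Lemma~\ref{lem_thick_peripheral} with $B_-=B_\iota$, ${\mathbf P}_-={\mathbf P}_\iota$, $\widetilde{D}=\widetilde{D}_{\iota+1}$, and $B_+=B_{\iota+1}$. Each disk piece $\widetilde{\Delta}\subset\widetilde{\mathbf \Delta}'_{\iota+1}$ is then either thick, or the $B_{\iota+1}$-side part adjacent to $\widetilde{\Delta}$ is a thin peripheral part whose peripheral component ${\mathbf P}^*$ of $B_{\iota+1}$ satisfies ${\mathbf P}^*\cap\widetilde{D}_{\iota+1}\subset\widetilde{\mathbf \Delta}'_{\iota+1}$. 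In the second case, ${\mathbf P}^*$ is contained in a uniquely determined maximal peripheral component of $B_{\iota+1}$ adjacent to $\widetilde{\mathbf \Delta}'_{\iota+1}$, and that maximal component was one of the peripheral components eligible to be chosen as ${\mathbf P}_{\iota+1}$. By the cross-sectional maximality in the construction, every such maximal peripheral component ${\mathbf Q}^*$ satisfies $\Area\bigl({\mathbf Q}^*\cap\widetilde{D}_{\iota+1}\bigr)\leq \Area\bigl({\mathbf P}_{\iota+1}\cap\widetilde{D}_{\iota+1}\bigr)=\Area\bigl(\widetilde{\mathbf \Delta}_{\iota+1}\bigr)$.

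Grouping the non-thick disk pieces of $\widetilde{\mathbf \Delta}'_{\iota+1}$ by these maximal peripheral components ${\mathbf Q}^*_1,\ldots,{\mathbf Q}^*_K$ and using the global bound $T$ on the number of thick disk pieces, I obtain
\begin{equation*}
\Area\bigl(\widetilde{\mathbf \Delta}'_{\iota+1}\bigr)\leq T + K\cdot\Area\bigl(\widetilde{\mathbf \Delta}_{\iota+1}\bigr),
\end{equation*}
so taking $r\geq T+K$ immediately yields $\Area\bigl(\widetilde{\mathbf \Delta}_\iota\bigr)< r\bigl(\Area\bigl(\widetilde{\mathbf \Delta}_{\iota+1}\bigr)+1\bigr)$.

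The main obstacle is to produce an upper bound on $K$ that is uniform over $(\mathcal{D},\mathcal{D}^*)\in\mathfrak{DR}$. Here the plan is to use Lemma~\ref{lem_peripheral_structure}(2): each ${\mathbf Q}^*_j$ is paired with exactly one other disk in $\widetilde{\mathcal D}_{B_{\iota+1}}\setminus\{\widetilde{D}_{\iota+1}\}$, giving at most $|\widetilde{\mathcal D}_{B_{\iota+1}}|-1\leq 6g-7$ choices of partner disk. For each fixed partner, I expect a burger-style argument analogous to the one in the proof of Lemma~\ref{lem_thick_finiteness}: two distinct maximal peripheral components of $B_{\iota+1}$ with the same partner would be bounded by parallel rectangular disk pieces of $\widetilde{\mathcal E}$ with respect to $\widetilde{\mathcal D}$ lying within a common stack of thin parts, and only the two outermost members of such a stack can be both peripheral and maximal. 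This would yield an absolute bound $K\leq 2(6g-7)$, and hence an admissible choice such as $r=T+12g$.
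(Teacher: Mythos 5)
Your skeleton (start from $\Area \bigl( \widetilde{\mathbf \Delta }_\iota \bigr) =\Area \bigl( \widetilde{\mathbf \Delta }'_{\iota +1}\bigr) $, split the disk pieces of $\widetilde{\mathbf \Delta }'_{\iota +1}$ into thick ones and ones lying under peripheral components of $B_{\iota +1}$, and exploit the maximality in the choice of ${\mathbf P}_{\iota +1}$) is the paper's, but the step where you pass to \emph{maximal} peripheral components has a genuine gap. Lemma~\ref{lem_thick_peripheral} only asserts that the peripheral component \emph{by} $\widetilde{P}_+$ has its cross-section inside $\widetilde{\mathbf \Delta }'_{\iota +1}$; it says nothing about the maximal peripheral component ${\mathbf Q}^\ast $ of $B_{\iota +1}$ containing it. The cross-section ${\mathbf Q}^\ast \cap \widetilde{D}_{\iota +1}$ may stick out of, or even strictly contain, $\widetilde{\mathbf \Delta }'_{\iota +1}$ --- this is exactly the nested configuration in the left of Figure~\ref{fig_peripherals} with the larger bigon on the $B_{\iota +1}$-side --- and in that case ${\mathbf Q}^\ast $ was \emph{not} among the components over which ${\mathbf P}_{\iota +1}$ was maximized, so your inequality $\Area \bigl( {\mathbf Q}^\ast \cap \widetilde{D}_{\iota +1}\bigr) \leq \Area \bigl( \widetilde{\mathbf \Delta }_{\iota +1}\bigr) $ is unjustified; in the extreme case all non-thick pieces fall into a single group whose area you cannot control, and the estimate $\Area \bigl( \widetilde{\mathbf \Delta }'_{\iota +1}\bigr) \leq T+K\Area \bigl( \widetilde{\mathbf \Delta }_{\iota +1}\bigr) $ collapses. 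The paper never leaves the class of ``eligible'' components: it takes all peripheral components of $B_{\iota +1}$ whose cross-sections are \emph{proper subdisks} of $\widetilde{\mathbf \Delta }'_{\iota +1}$, lets $\widetilde{\mathbf \Delta }^\flat _{\iota +1}$ be the closure of their exterior in $\widetilde{\mathbf \Delta }'_{\iota +1}$ (which consists of thick pieces only, by Lemma~\ref{lem_thick_peripheral}, hence at most $T$ of them), and then bounds the number of complementary regions of $\widetilde{\mathbf \Delta }^\flat _{\iota +1}$ by $T(c-1)$ using the uniform constant $c$ of Lemma~\ref{lem_disk_piece_type}, each region having area at most $\Area \bigl( \widetilde{\mathbf \Delta }_{\iota +1}\bigr) $ because it lies inside an eligible cross-section; this yields $r=T(c+1)$.

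Your bound on $K$ is also unsubstantiated. The claim that two distinct maximal peripheral components of $B_{\iota +1}$ sharing the same partner disk must be bounded by $\left( {\mathcal E},{\mathcal E}^\ast \right) $-parallel rectangular disk pieces lying in a common stack of thin parts does not follow from anything established: the burger argument in Lemma~\ref{lem_thick_finiteness} concerns a single stack of parallel thin parts separating two thick parts, whereas here the two bounding rectangles can lie in different parallelism classes with thick parts or other structure between them, and no lemma in the paper bounds the number of (maximal) peripheral components of a single $3$-ball. The paper's count of complementary regions of the at most $T$ thick pieces --- which is where the constant $c$ enters, a constant your argument never uses --- is precisely what replaces this missing bound; without it, or a genuine proof of your stack claim together with a repair of the eligibility issue above, the argument does not close.
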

\begin{proof}
We may suppose that $\iota $ belongs to $N$.
There are some peripheral components of $B_{\iota +1}$ with respect to $\widetilde{\mathcal E}$ whose intersections with $\widetilde{D}_{\iota +1}$ are proper subdisks of $\widetilde{\mathbf \Delta }'_{\iota +1}$.
Let $\widetilde{\mathbf \Delta }_{\iota +1}^\flat $ denote the closure of the exterior in $\widetilde{\mathbf \Delta }'_{\iota +1}$ of those peripheral components.
It follows from Lemma~\ref{lem_thick_peripheral} that $\widetilde{\mathbf \Delta }_{\iota +1}^\flat $ is composed of thick disk pieces, which are no more than $T$.
As mentioned in the proof of Lemma~\ref{lem_peripheral_number}, there exists a uniform upper bound $c$ for the number of complementary components of any disk piece.
The complementary components of $\widetilde{\mathbf \Delta }_{\iota +1}^\flat $ in $\widetilde{\mathbf \Delta }'_{\iota +1}$ are no more than $T(c-1)$, and have areas at most $\Area \bigl( \widetilde{\mathbf \Delta }_{\iota +1}\bigr) $.
Thus $\Area \bigl( \widetilde{\mathbf \Delta }'_{\iota +1}\bigr) $, and hence $\Area \bigl( \widetilde{\mathbf \Delta }_\iota \bigr) $, is bounded from above by $T+T(c-1)\Area \bigl( \widetilde{\mathbf \Delta }_{\iota +1}\bigr) $, and hence by $T(c+1)\bigl( \Area \bigl( \widetilde{\mathbf \Delta }_{\iota +1}\bigr) +1\bigr) $.
\end{proof}
\noindent
These show that $\bigl( \Area \bigl( \widetilde{\mathbf \Delta }_\iota \bigr) \bigr) _{\iota \in N}$ is bounded from above by a geometric sequence, and that $\Area \left( {\mathbf \Delta }\right) =\Area \bigl( \widetilde{\mathbf \Delta }_1\bigr) <r^T+r^{T-1}+r^{T-2}+\cdots +r+1$.
\end{proof}

\begin{figure}[ht]
\begin{picture}(320,150)(0,0)
\put(0,0){\includegraphics{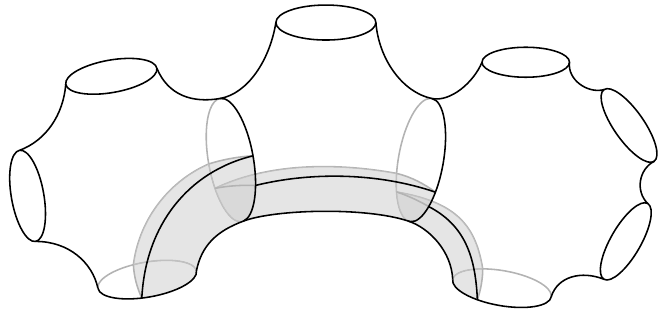}}
\put(25,5){$\widetilde{D}_{\iota -1}$}
\put(83,0){$\widetilde{\mathbf \Delta }_{\iota -1}$}
\put(48,71){$B_{\iota -1}$}
\put(82,39){${\mathbf P}_{\iota -1}$}
\put(98,108){$\widetilde{D}_\iota $}
\put(120,38){\vector(-1,3){5}}
\put(113,25){$\widetilde{\mathbf \Delta }'_\iota \supset \widetilde{\mathbf \Delta }_\iota $}
\put(152,98){$B_\iota $}
\put(152,54){${\mathbf P}_\iota $}
\put(199,109){$\widetilde{D}_{\iota +1}$}
\put(187,14){\vector(1,3){12}}
\put(138,3){$\widetilde{\mathbf \Delta }_{\iota +1}\subset \widetilde{\mathbf \Delta }'_{\iota +1}$}
\put(253,67){$B_{\iota +1}$}
\put(230,41){${\mathbf P}_{\iota +1}$}
\put(267,1){$\widetilde{D}_{\iota +2}$}
\put(212,0){$\widetilde{\mathbf \Delta }'_{\iota +2}$}
\end{picture}
\caption{Construction of a sequence of disks.}
\label{fig_pushing}
\end{figure}

\end{document}